\def\wt{\widetilde}
\def\wh{\widehat}
\def\ov{\overline}
\def \im{{\rm Im\,}}
 \def\up{\upharpoonright}
\def\cA{\mathcal A} \def\cH{\mathcal H}  \def\cB{\mathcal B} 
  \def\cF {\mathcal F}
\def\cK{\mathcal K} \def\cL{\mathcal L}
 \def\cN{\mathcal N}
\def\B{\mbox{\boldmath$B$}}
\def \gH{\mathfrak H}   \def \gN{\mathfrak N}
\def \bC{\mathbb C}  
\def\bR{\mathbb R}
\def \l{\lambda}
\def \a{\alpha}       \def \t{\theta}
 \def\g {\gamma}
\def \f{\varphi}  
 \def \G{\Gamma} 
\def \C{\widetilde {\mathcal C}}
\def \CA {C(\wt A)} \def \CAt {C(\wt A_\tau)}
\def \cd {\cdot}
\def \ex { {\rm ext} (A)} \def \cex {\overline {\rm ext }(A)}
\def\CAt{C(\wt A_\tau)} \def\TAt{T(\wt A_\tau)} \def\SAt{S(\wt A_\tau)}
\def \dom {{\rm dom}\,}  \def \ran {{\rm ran}\,}  \def \ker{{\rm
ker\,}}
 \def \mul {{\rm mul}\,}
\def  \RH {\wt R (\cH)}  \def  \RCH {\wt R_c (\cH)} \def\Se{{\rm Self} (A)} \def\Sez{{\rm Self}_0 (A)}
\def \CR {\bC\setminus\bR}
\def\bt{\{\cH,\G_0,\G_1\}}
\newtheorem{theorem}{Theorem}[section]
\newtheorem{proposition}[theorem]{Proposition}
\newtheorem{corollary}[theorem]{Corollary}
\newtheorem{lemma}[theorem]{Lemma}
\theoremstyle{definition}
\theoremstyle{definition}
\newtheorem {definition} [theorem]{Definition}
\theoremstyle{remark}
\newtheorem{remark}[theorem]{Remark}
\numberwithin{equation}{section}
\begin{document}
\title[On compressions of self-adjoint extensions]
{On compressions of self-adjoint extensions of a symmetric linear relation  }
\author{V.I. Mogilevskii}
\address{Department of Mathematical Analysis and Informatics, Poltava National V.G. Korolenko Pedagogical University,  Ostrogradski Str. 2, 36000 Poltava, Ukraine }
\email{vadim.mogilevskii@gmail.com}

\subjclass[2010]{47A06, 47A20, 47A56, 47B25}
\keywords{Symmetric and self-adjoint linear relation (operator), exit space self-adjoint  extension, compression, boundary triplet}
\begin{abstract}
Let $A$ be a symmetric linear relation in the Hilbert space $\gH$ with equal deficiency indices $n_\pm (A)\leq\infty$. A self-adjoint linear relation $\wt A\supset A$ in some Hilbert space $\wt\gH\supset \gH$ is called an exit space extension of $A$; such an extension is called finite-codimensional if $\dim (\wt\gH\ominus\gH)< \infty$. We study the compressions $C (\wt A)=P_\gH\wt A\up\gH$ of exit space extensions $\wt A=\wt A^*$. For a certain class of extensions $\wt A$ we parameterize  the compressions $C (\wt A)$  by means of abstract boundary conditions. This enables  us to characterize various properties of $C (\wt A)$ (in particular, self-adjointness) in terms of the parameter for $\wt A$ in the Krein formula for resolvents. We describe also the compressions of a certain class of finite-codimensional extensions. These results develop the  results by A. Dijksma and H. Langer obtained for a densely defined symmetric operator $A$ with finite deficiency indices.
\end{abstract}
\maketitle
\section{Introduction}
Let $\gH$ be a subspace in a Hilbert space  $\wt\gH$  and let $\wt A$ be a linear relation (in particular operator) in $\wt \gH$. Recall that a linear relation $C(\wt A)$ in $\gH$ given by $C(\wt A)=P_\gH\wt A\up\gH$ is called a compression of $\wt A$; moreover, the relation $\wt A$ and its compression $C(\wt A)$ are called finite-codimensional if $\dim (\wt\gH\ominus\gH)<\infty$. Compressions of linear operators or relations were recently studied in \cite{AziDaj12,ADW13,ACD16,DajLan17,DajLan18,Nud11}. In particular, it was shown in \cite{ADW13} that a finite-codimensional compression $C(\wt A)$ of a self-adjoint linear relation $\wt A$ is self-adjoint (for operators $\wt A=\wt A^*$ this fact was established earlier in  \cite{Ste68}).

Assume now that $A$ is a not necessarily densely defined symmetric operator in a Hilbert space $\gH$. A self-adjoint linear relation $\wt A\supset A$ in a Hilbert space $\wt\gH\supset \gH$ is called an exit space extension of $A$. Denote by $\Se$ the set of all exit space extensions $\wt A=\wt A^*$  of $A$ and by $\Sez$ the set of all $\wt A\in\Se$ such that $\wt A$ is an operator. As is known $\Se=\Sez$ if and only if $A$ is densely defined.

If $\wt A\in\Se$, then the compression $C(\wt A)$ of $\wt A $ is a symmetric extension of $A$. A description of all extensions $\wt A\in\Se$ and their compressions $C (\wt A)$ is an important problem in the extension theory of symmetric operators. In the paper by A.V. Shtraus \cite{Sht70} all extensions $\wt A\in\Sez$ of an operator $A$ with arbitrary (equal or unequal) deficiency indices $n_\pm(A)\leq \infty$  are parameterized by means of holomorphic operator-functions $F(\l), \; \l\in\bC_+,$ whose values are contractions from $\gN_i$ to $\gN_{-i}$ (here  $\gN_{\pm i}=\ker(A^* \mp i)$ are defect subspaces of $A$). In \cite{DajSno74} the result of \cite{Sht70} was extended to extensions $\wt A\in\Se$. In \cite{Sht66,Zag13} the compressions  $C(\wt A)$ of the  extensions  $\wt A\in\Sez$ we described in terms of the asymptotic behaviour of the corresponding parameter $F(\l)$.

In the case $n_+(A)=n_-(A)$ another parametrization  of the set $\Se$ is given by the Krein formula for generalized resolvents \cite{KreLan71,LanTex77}. This formula (see \eqref{2.31}) establishes a bijective correspondence $\wt A =\wt A_\tau$ between all relation-valued Nevanlinna functions $\tau=\tau(\l)\;(\l\in\CR)$ ($\tau (\l)$ is a linear relation in an auxiliary Hilbert space $\cH$) and all extensions $\wt A\in\Se$. The canonical self-adjoint extension $A_0$ of $A$ in \eqref{2.31} is called a basic extension. An operator-valued parameter $\tau=\tau(\l)$ in \eqref{2.31} is called rational if
\begin{gather}\label{1.1}
\tau(\l)=\cA+\l\cB + \sum_{j=1}^l \frac 1 {\a_j-\l}\cA_j, \quad \l\in\CR,
\end{gather}
where $\a_j\in\bR$ and $\cA=\cA^*, \; \cB\geq 0$ and $\cA_j\geq 0$ are bounded operators in $\cH$. In the case $n_+(A)=n_-(A)<\infty$ an extension $\wt A_\tau\in\Se$ is finite-codimensional if and only if a parameter $\tau$ is rational.

In the paper by H. Langer and A. Dijksma \cite{DajLan18} the compressions $C(\wt A_\tau)$ of extensions $\wt A_\tau$ are investigated in terms of the parameter $\tau$ from the Krein formula \eqref{2.31}. The main results of \cite{DajLan18} can be formulated in the form of the following theorem.
\begin{theorem}\label{th1.1}
Assume that $A$ is a densely defined, closed symmetric operator in $\gH$ with finite deficiency indices $n_+(A)=n_-(A)<\infty$ and $A_0=A_0^*$ is the basic extension in the Krein formula \eqref{2.31}. Let for simplicity a parameter $\tau=\tau(\l), \; \l\in\CR,$ in  \eqref{2.31} be an operator-valued function, let $\wt A_\tau\in\Sez$ be the corresponding extension of  $A$ and let $\CAt$ be the compression of $\wt A_\tau$. Assume also that $\cB_\tau\geq 0$ is an operator in $\cH$ given by $\cB_\tau=\lim\limits_{y\to \infty} \frac 1 {iy} \tau(iy) $. Then:

{\rm (i)} If
\begin{gather}\label{1.2}
\lim_{y\to\infty} y \im (\tau(iy)h,h)=\infty, \quad h\in\cH, \;\; h\neq 0,
\end{gather}
then $\CAt\subset A_0$.

{\rm (ii)} If \eqref{1.2} holds, then $\CAt=A$ if and only if $\cB_\tau=0$.

{\rm (iii)} If $\cB_\tau>0$, then $\CAt=A_0$. If \eqref{1.2} holds and $\CAt=A_0$, then $\cB_\tau>0$. Moreover, if $\tau$ is a rational parameter \eqref{1.1}, then $\CAt=A_0\iff \cB_\tau>0$.

{\rm (\romannumeral 4)} If $\tau$ is the rational parameter \eqref{1.1}, then the extension $\wt A_\tau$ is finite-codimensional and the canonical self-adjoint extension $\CAt$ of $A$ corresponds in the Krein formula to the self-adjoint linear relation $\tau_\infty$ in $\cH$ given by
\begin{gather}\label{1.3}
\tau_\infty =\{\{h, P_{\ker \cB}\cA h+h'\}: h\in\ker \cB, h'\in\ran \cB\}.
\end{gather}
\end{theorem}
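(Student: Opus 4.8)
The plan is to pass through a boundary triplet $\Pi=\{\cH,\G_0,\G_1\}$ for $A^*$ with $A_0=\ker\G_0$, Weyl function $M(\cd)$ and $\g$-field $\g(\cd)$, so that the Krein formula \eqref{2.31} takes the form $R_\tau(\l):=P_\gH(\wt A_\tau-\l)^{-1}\up\gH=(A_0-\l)^{-1}-\g(\l)(\tau(\l)+M(\l))^{-1}\g(\bar\l)^*$ (up to the sign convention in \eqref{2.31}). Since $\CAt$ is symmetric with $A\subset\CAt\subset A^*$, it is the extension $A_\Theta$ determined by the linear relation $\Theta=\{\{\G_0\wh f,\G_1\wh f\}:\wh f\in\CAt\}$ in $\cH$. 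This converts the three target identities into statements about $\Theta$: one has $\CAt\subset A_0\iff\dom\Theta=\{0\}$, $\;\CAt=A_0\iff\Theta=\{0\}\times\cH$, and $\CAt=A\iff\Theta=\{\{0,0\}\}$. Thus everything reduces to computing $\Theta$ from $\tau$, and (since $n_\pm(A)<\infty$) we may work with a finite-dimensional $\cH$, which makes graph-limits norm-limits and $\ran\cB$ closed.

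The key step is to recover the relation itself, not merely its resolvent, from the generalized resolvent. I would prove the graph-limit formula $\CAt=\lim_{y\to\infty}(-iy)\,[I+iy\,R_\tau(iy)]$, obtained by compressing the elementary identity $(-iy)[I+iy(\wt A_\tau-iy)^{-1}]\to\wt A_\tau$ and commuting $P_\gH(\cd)\up\gH$ with the limit. Substituting the displayed Krein formula splits the right-hand side as $(-iy)[I+iy(A_0-iy)^{-1}]\to A_0$ plus the correction $D(iy):=(iy)^2\,\g(iy)(\tau(iy)+M(iy))^{-1}\g(-iy)^*$. The asymptotics of $D(iy)$ are controlled by the identity $\g(iy)^*\g(iy)=y^{-1}\im M(iy)$, which is precisely where the $y$-weight in \eqref{1.2} enters: condition \eqref{1.2} forces $(\tau(iy)+M(iy))^{-1}$ to decay fast enough that $D(iy)$ contributes nothing to $\dom\Theta$, giving $\CAt\subset A_0$ and hence part (i).

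Granting (i), parts (ii) and (iii) follow by tracking which directions survive in $\lim D(iy)$. The surviving multivalued part of $\Theta$ is governed by the linear growth $\cB_\tau=\lim_{y}\tfrac1{iy}\tau(iy)$: if $\cB_\tau=0$ no direction survives, the correction drives $\Theta$ down to $\{\{0,0\}\}$ and $\CAt=A$, whereas if $\cB_\tau>0$ the correction saturates so that $\Theta=\{0\}\times\cH$ and $\CAt=A_0$, with the stated converses read off from the same expansion (the rational case yielding the full equivalence because there the limits are genuine norm limits). For part (iv) I would use the shortcut already available in the excerpt: if $\tau$ is the rational parameter \eqref{1.1}, then $\wt A_\tau$ is finite-codimensional, whence by \cite{ADW13,Ste68} the compression $\CAt$ is self-adjoint; being a self-adjoint extension of $A$ inside $\gH$, it is canonical, i.e. $\Theta=\Theta^*$ is a constant parameter. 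It then remains to identify this constant as the graph-limit of $\tau(\l)$ at infinity: on $\ker\cB$ the poles in \eqref{1.1} vanish and $\tau(iy)\to\cA$ (compressed by $P_{\ker\cB}$), while on $\ran\cB$ the term $\l\cB$ blows up and produces the multivalued part, giving exactly $\tau_\infty$ in \eqref{1.3}.

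The main obstacle is the relation-valued limiting analysis of $D(iy)$: first, rigorously justifying that the compression commutes with the graph-limit in the recovery formula, and second, extracting $\dom\Theta$, the operator part $P_{\ker\cB}\cA$, and the multivalued part from $\lim D(iy)$ in the degenerate regime where $\cB_\tau$ is neither $0$ nor boundedly invertible. Finite-dimensionality of $\cH$ tames the purely analytic difficulties, so the genuine content is the conceptual identification of the graph-limit with the compression together with the borderline case (i), where $\im\tau$ dominates through the $y$-weight but $\cB_\tau$ may still vanish; separating this case from $\cB_\tau>0$ requires the full asymptotics of $(\tau(iy)+M(iy))^{-1}$, and this is the step I expect to be the most delicate.
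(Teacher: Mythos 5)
Your overall strategy --- recover $\CAt$ as the strong graph limit of the compressed Yosida-type approximants $B_y=(-iy)[I+iyR_\tau(iy)]$ and then read the answer off the asymptotics of the Krein formula --- is a coherent plan, close in spirit to Shtraus' method and to the original arguments of \cite{DajLan18}, but it has a genuine gap at its load-bearing step. The identity $\CAt=\text{graph-lim}_{y\to\infty}B_y$ does not follow from ``commuting $P_\gH(\cd)\up\gH$ with the limit'': compressions and graph limits do not commute. The inclusion $\CAt\subset\text{graph-lim}\,B_y$ can be salvaged in the setting of Theorem \ref{th1.1}: for $\{f,f'\oplus f_r'\}\in\wt A_\tau$ the trial vectors $u_y=f+if'/y\in\gH$ give $B_yu_y=f'+P_\gH\bigl(-iy(\wt A_\tau-iy)^{-1}f_r'\bigr)\to f'+P_\gH P_{\ov{\dom \wt A_\tau}}\,f_r'$, and the cross term dies only because $\wt A_\tau$ is here a densely defined operator; for relations it equals $-P_\gH P_{\mul \wt A_\tau}f_r'$ and need not vanish, so the commutation claim is actually false beyond the operator case. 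The reverse inclusion $\text{graph-lim}\,B_y\subset\CAt$ --- which every ``only if'' in (i)--(iii) and already the implication $\cB_\tau>0\Rightarrow\CAt=A_0$ require, since to prove $\CAt=A_0$ you must \emph{construct} elements of $\wt A_\tau$ --- is nowhere argued: from $u_y\to f$, $B_yu_y\to g$ you must produce $f_r'$ with $\{f,g\oplus f_r'\}\in\wt A_\tau$, and the exit components $P_{\gH_r}\wt B_yu_y$ may be unbounded while $\gH_r$ is infinite-dimensional for non-rational $\tau$, so finite-dimensionality of $\cH$ does not tame this by itself. Likewise the analysis of $D(iy)$ is deferred exactly where the content lies: what \eqref{1.2} yields directly --- $\|\g(\pm iy)\|=O(1)$ and, after upgrading the pointwise divergence of $y\im(\tau(iy)h,h)$ to uniform divergence (monotonicity in $y$ plus compactness of the unit sphere of $\cH$), $\|(\tau(iy)+M(iy))^{-1}\|=o(y)$ --- gives only $\|D(iy)\|=o(y)$, nowhere near ``$D(iy)$ contributes nothing to $\dom\Theta$''; one must couple the joint convergence of $u_y$ and $B_yu_y$ with the identity $y\,\g^*(iy)\g(iy)=\im M(iy)$, and that computation, which you yourself flag as the delicate step, is essentially the whole proof of (i)--(iii).

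For comparison: the paper performs no resolvent limits at all. It realizes $\wt A_\tau$ through the coupling \eqref{3.10} with a boundary triplet $\Pi_r$ in the exit space whose Weyl function is $\tau$, proves the exact algebraic identity $\CAt=A_{-\cF_r}$ with $\cF_r$ the forbidden relation of $\Pi_r$ (Proposition \ref{pr3.7}), and imports the asymptotics $\cN_\tau,\cB_\tau$ once and for all through the structural description \eqref{2.27}, \eqref{2.28} of $\cF_r$ (Theorem \ref{th2.12}); Lemma \ref{lem2.7} reduces general parameters to this case, yielding $\CAt=A_{\t_c}$ with $\t_c$ of \eqref{3.13}--\eqref{3.14} (Theorem \ref{th3.8}), from which (i)--(iii) are read off via $\dom\t_c$ and $\mul\t_c$ (Theorem \ref{th3.10}, Corollaries \ref{cor3.11}, \ref{cor3.12}, Theorem \ref{th3.23}). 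So the limit analysis you still owe is precisely what Theorem \ref{th2.12} packages. Concerning (iv), your shortcut through \cite{ADW13,Ste68} is legitimate for Theorem \ref{th1.1} as a statement of \cite{DajLan18}, and your identification of $\tau_\infty$ on $\ker\cB$ and $\ran\cB$ matches \eqref{1.3}; but note the paper runs the logic the other way: it computes $\dom\cN_{\tau_0}=\ker\cB$, $\cN_{\tau_0}=\cA\up\ker\cB$ and $\mul\t_c=\ran\cB\oplus\cK$ directly from \eqref{1.1}, obtains self-adjointness of $\t_c$ from Lemma \ref{lem2.4b} (Theorem \ref{th3.19}), and thereby \emph{recovers} the Stenger/Azizov--Dijksma--Wanjala theorem as Corollary \ref{cor3.26} rather than assuming it.
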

It is also shown in \cite{DajLan18} that $\tau_\infty$ admits the representation
\begin{gather}\label{1.4}
\tau_\infty=\{\{h,h'\}\in\cH^2: \exists h(\l)\in\cH: h=\lim_{\l\to\infty} h(\l),\; h'=\lim_{\l\to\infty}\tau(\l) h(\l) \}.
\end{gather}

In the present paper we study compressions of extensions  $\wt A\in\Se$ of a symmetric linear relation (in particular, not necessarily  densely defined symmetric  operator) $A$ with possibly infinite deficiency indices $n_+(A)=n_-(A)\leq \infty$. Our approach is based on the theory of boundary triplets  and their  Weyl functions (see \cite{DM91,GorGor,Mal92} and references therein). Recall \cite{GorGor,Mal92} that a collection $\Pi=\bt$ consisting of an  auxiliary Hilbert space $\cH$ and linear mappings $\G_j: A^*\to \cH,\; j\in\{0,1\},$  is called a boundary triplet for $A^*$ if the mapping $(\G_0,\G_1)^\top$ is surjective and the abstract Green identity \eqref{2.10} is valid (here $A^*$ is the adjoint relation). If $\Pi=\bt$ is a boundary triplet for $A^*$, then the equality (the abstract boundary conditions)
\begin {equation}\label{1.5}
\t\to  A_\t :=\{ \hat f\in A^*:\{\G_0  \hat f,\G_1 \hat f \}\in
\t\}
\end{equation}
gives a parametrization of all proper extensions $\wt A$ of $A$ (i.e., all linear relations $\wt A$ in $\gH$ with $A\subset \wt A\subset A^*$)  in terms of linear relations  $\t$ in $\cH$. Moreover, it was shown in \cite{DM91,Mal92} that each boundary triplet $\Pi$ for $A^*$ gives rise to the Krein formula \eqref{2.31} with coefficients $A_0, \g(\l)$ and $M(\l)$ naturally defined in terms of $\Pi$.

Assume that $A$ is a closed symmetric linear relation in $\gH$ with equal deficiency indices $n_+(A)=n_-(A)\leq\infty$ and let $\Pi=\bt$ be a boundary triplet for $A^*$. In the paper we consider extensions $\wt A_\tau\in\Se$ corresponding to a certain subclass of Nevanlinna parameters $\tau(\l)$ in \eqref{2.31}. We show that the compression $\CAt$ of $\wt A_\tau$ being a symmetric extension of $A$ admits the representation $\CAt= A_{\t_c}$ (see \eqref{1.5}) with a symmetric linear relation $\t_c$, which in a certain sense is a limit value of $\tau(iy)$ when $y\to\infty$. This result enables us to characterise various properties of $\CAt$ (self-adjointness,  inclusion $\CAt\subset A_0$, equalities $\CAt=A$ and $\CAt=A_0$) in terms of the parameter  $\tau$. In the case $n_\pm (A)<\infty$ the obtained results become valid for any Nevanlinna parameter $\tau$. This fact enables us to prove the following theorem which strengthens essentially statements (i) -- (iii) of Theorem \ref{th1.1}.
\begin{theorem}\label{th1.2}
Assume that $A$ is a closed symmetric linear relation in $\gH$ with finite deficiency indices $n_+(A)=n_-(A) < \infty$ and let all other assumptions of Theorem \ref{th1.1} be satisfied. Then:

{\rm (i$^\prime$)} $\CAt\subset A_0$ if and only if \eqref{1.2} holds.

{\rm (ii$^\prime$)} $\CAt=A$ if and only $\cB_\tau=0$ and \eqref{1.2} holds.

{\rm (iii$^\prime$)} $\CAt=A_0$ if and only if $\cB_\tau >0$.
\end{theorem}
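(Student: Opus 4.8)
The plan is to prove Theorem~\ref{th1.2} as a consequence of the general boundary-triplet machinery announced in the paragraph preceding it, specializing to the finite-dimensional case $n_\pm(A)<\infty$ where, as the author states, the representation $\CAt=A_{\t_c}$ becomes valid for \emph{every} Nevanlinna parameter $\tau$. First I would fix a boundary triplet $\Pi=\bt$ for $A^*$ so that $A_0=\ker\G_0$ is the basic extension in the Krein formula \eqref{2.31}, and recall the identification $\CAt=A_{\t_c}$ where $\t_c\subset\cH^2$ is the symmetric limit relation attached to $\tau$. In finite dimensions $\cH=\bC^n$ and $\tau(\l)$ is a matrix-valued Nevanlinna function, so the limit relation $\t_c$ should coincide with the relation $\tau_\infty$ described in \eqref{1.3}--\eqref{1.4}; the whole argument then reduces to reading off $\ker\t_c$, $\mul\t_c$, and the inclusion pattern of $\t_c$ from the asymptotics of $\tau(iy)$.

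The key computational input is the integral (Nevanlinna) representation $\tau(\l)=\cA+\l\cB+\int_\bR\bigl(\tfrac1{t-\l}-\tfrac{t}{1+t^2}\bigr)\,d\Si(t)$ with $\cB=\cB_\tau\geq0$ and $\Si$ a matrix-valued measure. From this I would extract three asymptotic facts as $y\to\infty$: that $\tfrac1{iy}\tau(iy)\to\cB_\tau$, that $y\,\im(\tau(iy)h,h)\to\infty$ exactly on the vectors $h$ that are \emph{not} killed by both $\cB_\tau$ and the total mass of $\Si$, and more precisely that the quadratic form $y\,\im(\tau(iy)h,h)$ stays bounded precisely when $h$ lies in the subspace on which $\tau(iy)$ has a finite limit. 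These are the standard Nevanlinna growth dichotomies; in the finite-dimensional setting they are clean because every entry of $\tau$ has a genuine scalar Nevanlinna representation, so boundedness of the imaginary part along $iy$ forces existence of a finite limit.

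With these in hand the three equivalences fall out by translating properties of $A_{\t_c}$ back through the boundary correspondence \eqref{1.5}. For (i$'$), the inclusion $\CAt\subset A_0=\ker\G_0$ holds iff $A_{\t_c}\subset\ker\G_0$, which by \eqref{1.5} holds iff $\t_c\subset\{0\}\times\cH$, i.e.\ iff $\t_c$ is (the graph of) a relation with trivial domain; I would show this is equivalent to condition \eqref{1.2} by noting that $h\in\dom\t_c$ forces $y\,\im(\tau(iy)h,h)$ to stay bounded, so $\dom\t_c=\{0\}$ is exactly the negation of the existence of a nonzero $h$ with bounded imaginary part, which is \eqref{1.2}. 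For (ii$'$), $\CAt=A$ means $A_{\t_c}=A=A^*\cap\ker\G_0\cap\ker\G_1$ in triplet terms, i.e.\ $\t_c=\{0\}\times\{0\}$, forcing both $\dom\t_c=\{0\}$ (giving \eqref{1.2}) and $\mul\t_c=\{0\}$; since the multivalued part is governed by $\ran\cB=\ran\cB_\tau$ via \eqref{1.3}, this is exactly $\cB_\tau=0$ together with \eqref{1.2}. For (iii$'$), $\CAt=A_0$ means $A_{\t_c}=A_0=\ker\G_0$, i.e.\ $\t_c=\{0\}\times\cH$, so $\t_c$ is purely multivalued with $\mul\t_c=\cH$; by \eqref{1.3} this forces $\ran\cB=\cH$, equivalently $\cB_\tau>0$ since $\cB_\tau\geq0$ on the finite-dimensional space $\cH$, and conversely $\cB_\tau>0$ makes $\tfrac1{iy}\tau(iy)h\to\cB_\tau h$ blow up in modulus for all $h\neq0$, pushing the whole spectrum into the multivalued part.

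The main obstacle I expect is justifying that the abstract limit relation $\t_c$ really agrees with the explicit $\tau_\infty$ of \eqref{1.3}--\eqref{1.4} in a way tight enough to read off its domain, kernel, and multivalued part \emph{simultaneously}; the delicate point is the finite-dimensional sharpening in (iii$'$), where one must rule out the possibility that $\cB_\tau$ is merely nonzero but degenerate, i.e.\ one has to use $\dim\cH<\infty$ to turn the qualitative statement $\ran\cB_\tau=\cH$ into the order condition $\cB_\tau>0$, and symmetrically to upgrade \eqref{1.2} from a statement about each fixed $h$ to a uniform statement controlling all of $\dom\t_c$ at once. This is precisely where the hypothesis $n_\pm(A)<\infty$ is indispensable and where the theorem genuinely strengthens Theorem~\ref{th1.1}, since it removes the a priori assumption of \eqref{1.2} from parts (i)--(iii).
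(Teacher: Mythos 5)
Your proposal matches the paper's own route: the paper proves Theorem \ref{th1.2} (as Theorems \ref{th3.22} and \ref{th3.23}) by specializing the general parametrization $\CAt=A_{\t_c}$ of Theorem \ref{th3.8} to $\dim\cH<\infty$, where $\dom\t_c=\dom\cN_{\tau_0}$, $\mul\t_c=\ran\cB_{\tau_0}$ (closed automatically), and the three equivalences are read off through the order-preserving correspondence \eqref{1.5}, exactly as you outline, with finite-dimensionality upgrading $\ker\cB_\tau=\{0\}$ to $\cB_\tau>0$. Two harmless slips worth noting: the limit relation satisfies $\t_c=-\tau_\infty$ rather than $\t_c=\tau_\infty$ (the Krein-formula parameter and the boundary-condition parameter differ by a sign, cf.\ Theorem \ref{th2.14}(ii) and $\cN_\tau=-\cN_\tau'$ in Theorem \ref{th1.3}), and \eqref{1.3}--\eqref{1.4} are stated only for rational $\tau$ --- but since only $\dom\t_c$ and $\mul\t_c$ enter your argument and the general identities \eqref{3.13.1}, \eqref{3.14} are part of the machinery you invoke, neither affects the conclusion.
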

We prove also the following theorem which characterizes extensions $\wt A\in\Se$ with self-adjoint compression $C (\wt A)$.
\begin{theorem}\label{th1.3}
Let the assumptions be the same as in Theorem \ref{th1.2}. Then:

{\rm (i)} $\CAt$ is self-adjoint if and only if  $\lim\limits_{y\to\infty} y \im (\tau(iy)h,h)<\infty$ for all $ h\in \ker \cB_\tau$.

{\rm (i)} $\CAt$ is self-adjoint and transversal with $A_0$ (that is, $\CAt\cap A_0=A$ and $\CAt\wh + A_0=A^*$) if and only if  $\lim\limits_{y\to\infty} y \im (\tau(iy)h,h)<\infty$ for all $ h\in \cH$. In this  case the corresponding parameter for $\CAt$ in the Krein formula is the operator $\cN_\tau'$ in $\cH$ given by $\cN_\tau'h=\lim\limits_{y\to\infty} \tau(iy)h,\;h\in\cH$. Moreover, in the sense of \eqref{1.5} $\CAt=A_{\cN_\tau}$ with $\cN_\tau=-\cN_\tau'$.
\end{theorem}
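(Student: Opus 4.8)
The plan is to reduce both statements to the representation $\CAt=A_{\t_c}$ proved in the body of the paper, where $\t_c$ is the symmetric limit relation of $\tau(iy)$ as $y\to\infty$, combined with two standard boundary-triplet facts: $A_\t$ is self-adjoint if and only if $\t=\t^*$ in $\cH$, and $A_\t$ is transversal with $A_0=\ker\G_0$ if and only if $\t$ is a bounded operator with $\dom\t=\cH$. Because $n_\pm(A)<\infty$ forces $\dim\cH=n_\pm(A)<\infty$, a symmetric relation $\t$ in $\cH$ is self-adjoint exactly when $\dim\t=\dim\cH$; this dimension count will drive the whole argument, and it is insensitive to the sign convention linking $\tau$ in \eqref{2.31} to the parameter in \eqref{1.5}.

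First I would fix the Nevanlinna representation $\tau(\l)=\cA+\cB\l+\int_\bR\bigl(\tfrac1{t-\l}-\tfrac t{1+t^2}\bigr)\,d\Si(t)$ and extract the behaviour along $\l=iy$. Taking $\tfrac1{iy}\tau(iy)\to\cB$ identifies $\cB_\tau=\cB$, while for $h\in\ker\cB$ one computes $y\,\im(\tau(iy)h,h)\to\int_\bR d(\Si(t)h,h)=(\Si(\bR)h,h)$. Thus the hypothesis in (i) is precisely that the positive measure $(\Si(\cdot)h,h)$ has finite total mass; a subadditivity argument shows that $\cD:=\{h\in\ker\cB:(\Si(\bR)h,h)<\infty\}$ is a subspace, and on $\cD$ the uniform bound $|t-iy|^{-1}\le y^{-1}$ together with dominated convergence gives $\tau(iy)h\to\cA h-\int_\bR\tfrac t{1+t^2}\,d\Si(t)h=:\cN_\tau'h$.

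The technical core is to pin down the exact shape of $\t_c$: I would show $\mul\t_c=\ran\cB_\tau$ (the directions of linear growth of $\tau(iy)$) while the single-valued part of $\t_c$ has domain exactly $\cD$ and acts, up to sign, by $\cN_\tau'$. Given $\CAt=A_{\t_c}$ and the symmetry of $\t_c$, this yields $\dim\t_c=\dim\cD+\dim\ran\cB_\tau$. Since $\dim\ker\cB_\tau+\dim\ran\cB_\tau=\dim\cH$, self-adjointness $\dim\t_c=\dim\cH$ holds iff $\cD=\ker\cB_\tau$, i.e.\ iff the finiteness condition holds for every $h\in\ker\cB_\tau$; this is (i). For (ii), transversality of $A_{\t_c}$ with $A_0$ forces $\t_c$ to be a bounded everywhere-defined operator, i.e.\ $\mul\t_c=\{0\}$ and $\dom\t_c=\cH$. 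The first condition gives $\ran\cB_\tau=\{0\}$, hence $\cB_\tau=0$, and the second gives $\cD=\cH$, i.e.\ the finiteness condition for all $h\in\cH$ (which by $y\,\im(\tau(iy)h,h)\ge y^2(\cB h,h)$ already forces $\cB_\tau=0$); conversely this condition produces the bounded self-adjoint operator $\cN_\tau'=\lim_{y\to\infty}\tau(iy)$, transversal with $A_0$. The stated sign flip $\cN_\tau=-\cN_\tau'$ in $\CAt=A_{\cN_\tau}$ is then bookkeeping from the convention relating \eqref{2.31} to \eqref{1.5}.

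The main obstacle I anticipate is the structural step identifying $\t_c$: one must upgrade the quadratic-form bound $\limsup_y y\,\im(\tau(iy)h,h)<\infty$ to genuine norm-convergence of the vectors $\tau(iy)h$ and to the membership $\{h,\cN_\tau'h\}\in\t_c$, and simultaneously verify that $\t_c$ contains no pairs beyond $\cN_\tau'\!\up\cD$ and $\{0\}\times\ran\cB_\tau$. Once the precise form of $\t_c$ is secured, the dimension count and the transversality criterion deliver both parts of the theorem at once.
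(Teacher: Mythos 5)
Your proposal is correct and follows essentially the paper's own route: the paper likewise reduces both parts to the representation $\CAt=A_{\t_c}$ with $\t_c$ given by \eqref{3.14} (Theorems \ref{th3.8} and \ref{th3.22}), so the ``technical core'' you flag as the main obstacle --- that $\mul\t_c=\ran\cB_{\tau_0}\oplus\cK$ and that the operator part of $\t_c$ is $-\cN_{\tau_0}$ on $\dom\cN_{\tau_0}$ --- is exactly what those body results supply and you are entitled to cite, after which self-adjointness and transversality are read off via Proposition \ref{pr2.6}, (v) and (vii), as in Theorems \ref{th3.16} and \ref{th3.24}. The only cosmetic differences are that for part (i) the paper tests self-adjointness of $\t_c$ through Lemma \ref{lem2.4b}, (iii) (the inclusion $\cH\ominus\mul\t_c=\ker\cB_{\tau_0}\subset\dom\t_c$) rather than your equivalent finite-dimensional count $\dim\t_c=\dim\cH$, and it invokes the known asymptotics of Proposition \ref{pr2.1.1} instead of re-deriving $\cB_\tau$, $\dom\cN_\tau$ and the convergence of $\tau(iy)h$ from the Nevanlinna integral representation.
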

Finally, we extend statement (\romannumeral 4) of Theorem \ref{th1.1} to a certain class of finite-codimensional extensions $\wt A\in\Se$ of a symmetric linear relation $A$ with possibly infinite equal deficiency indices $n_\pm (A)$. In the case $n_+(A)=n_-(A)<\infty$ this result holds for any finite-codimensional extension $\wt A\in\Se$.

\section{Preliminaries}
\subsection{Notations}
The following notations will be used throughout the paper: $\gH$, $\cH$ denote separable  Hilbert spaces; $\B (\cH_1,\cH_2)$  is the set of all bounded linear operators defined on $\cH_1$ with values in  $\cH_2$;  $A\up \mathcal L$ is a restriction of the operator $A\in \B(\cH_1,\cH_2)$ onto the linear manifold $\mathcal L\subset\cH_1$; $P_\cL$ is the orthoprojection in $\gH$ onto the subspace $\cL\subset \gH$;  $\bC_+\,(\bC_-)$ is the open  upper (lower) half-plane  of the complex plane.

Recall that a linear manifold $T$ in the Hilbert space $\cH_0\oplus\cH_1$ ($\cH\oplus\cH$) is called a  linear relation from $\cH_0$ to $\cH_1$ (resp. in $\cH$). The set of all closed linear relations from $\cH_0$ to $\cH_1$ (in $\cH$) will be denoted by $\C (\cH_0,\cH_1)$ (resp. $\C(\cH)$). Clearly for each linear operator $T:\dom T\to\cH_1, \;\dom T\subset \cH_0,$ its ${\rm gr} T =\{\{f,Tf\}:f\in \dom T\} $  is a linear relation from $\cH_0$ to $\cH_1$. This fact enables one to consider an operator as a linear relation.

For a linear relation $T$ from $\cH_0$ to $\cH_1$  we denote by
\begin{gather*}
\dom T:=\{h_0\in\cH_0: \exists h_1\in\cH_1\;\; \{h_0,h_1\}\in T\}, \quad \ker T:=\{h_0\in\cH_0:  \{h_0,0\}\in T\} \\
\ran T:=\{h_1\in\cH_1: \exists h_0\in\cH_0\;\; \{h_0,h_1\}\in T\}, \quad \mul T:=\{h_1\in\cH_1: \{0,h_1\}\in T\}
\end{gather*}
the domain, kernel, range and multivalued part  of
$T$ respectively; moreover, we let
\begin{gather*}
 \wh\mul T=\{0\}\oplus\mul T=\{\{0,h_1\}: h_1\in\mul T\}.
\end{gather*}
Denote also by $T^{-1}$ and $T^*$ the inverse and adjoint linear relations of $T$ respectively.

For an operator $T=T^*\in \B(\gH)$ we write $T\geq 0$ if $(Tf,f)\geq 0,\; f\in \gH,$ and $T>0 $ if $T-\a I\geq 0$ with some $\a>0$.

For linear relations $T_1$ and $T_2$ in $\cH$ we let $T_1\wh + T_2=\{\wh f+\wh g:\wh f\in T_1,\, \wh g\in T_2\}$.
\subsection{Nevanlinna functions}
Recall that  a holomorphic operator function $M:\CR\to \B(\cH)$ is called a Nevanlinna function if $\im\l\cd\im M (\l)\geq 0$ and $M^*(\l)=M (\ov\l), \; \l\in\CR$. The class of all  Nevanlinna $\B(\cH)$-valued functions will be denoted by $R[\cH]$.
\begin{definition}\label{def2.1}
The operator-function $M\in R[\cH]$ is referred to the class:

{\rm (\romannumeral 1)} $R_c[\cH]$, if $\ran \im M(\l)$ is closed for all $\l\in\CR$;

{\rm (\romannumeral 2)} $R_u[\cH]$, if $(\im M(\l))^{-1}\in\B (\cH)$ for all $\l\in\CR$ or equivalently if $\im\l\cd\im M (\l) > 0, \; \l\in\CR$.
\end{definition}
Clearly $R_u[\cH]\subset R_c[\cH]\subset R[\cH]$; moreover in the case $\dim \cH<\infty $ one has  $R_c [\cH]=R [\cH]$.

The following proposition is well known (see e.g. \cite{Mal92}).
\begin{proposition}\label{pr2.1.1}
If $M\in R[\cH]$, then the equality
\begin{gather}\label{2.3}
\cB_M=s\text{-}\lim_{y\to\infty}\tfrac 1 {iy}M(iy)
\end{gather}
defines the operator $\cB_M\in \B(\cH)$ such that $\cB_M\geq 0$. Moreover, the equality
\begin{gather}\label{2.3.1}
\dom \cN_M =\{h\in\cH: \lim_{y\to\infty}  y\im (M(iy)h,h)< \infty\}
\end{gather}
defines the (not necessarily closed) linear manifold $\dom\cN_M \subset \cH$ and for each $h\in \dom \cN_M $ there exists the limit
\begin{gather}\label{2.3.2}
\cN_M h:=\lim_{y\to\infty}  M(iy)h, \quad h\in \dom \cN_M.
\end{gather}
Hence the equalities \eqref{2.3.1} and \eqref{2.3.2} define the linear operator $\cN_M:\dom \cN_M\to \cH$.
\end{proposition}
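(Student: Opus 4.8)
The plan is to read every assertion off the operator Nevanlinna (integral) representation of $M\in R[\cH]$,
\begin{gather*}
M(\l)=\cA+\cB\l+\int_\bR\Bigl(\frac 1{t-\l}-\frac t{1+t^2}\Bigr)d\Sigma(t),\quad \l\in\CR,
\end{gather*}
where $\cA=\cA^*\in\B(\cH)$, $\cB=\cB^*\geq 0$ lies in $\B(\cH)$ (boundedness of the linear term being guaranteed precisely because $M$ takes values in $\B(\cH)$, i.e. $M\in R[\cH]$), and $\Sigma(\cdot)$ is a nondecreasing $\B(\cH)$-valued function with $\int_\bR(1+t^2)^{-1}d\Sigma(t)\in\B(\cH)$. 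Two finiteness facts will drive all estimates: the \emph{global} bound $\int_\bR(1+t^2)^{-1}d(\Sigma(t)g,g)\leq\|\int_\bR(1+t^2)^{-1}d\Sigma\|\,\|g\|^2$, valid for every $g$, and, for a vector $h$ with $(\Sigma(\bR)h,h):=\int_\bR d(\Sigma(t)h,h)<\infty$, the finiteness of the scalar measure $d(\Sigma(t)h,h)$ itself.

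For \eqref{2.3} I would substitute $\l=iy$ and divide by $iy$, obtaining
\begin{gather*}
\frac 1{iy}M(iy)=\frac{\cA}{iy}+\cB+\int_\bR g_y(t)\,\frac{d\Sigma(t)}{1+t^2},\qquad g_y(t)=\frac{1+t^2}{t^2+y^2}+\frac{it(y^2-1)}{y(t^2+y^2)},
\end{gather*}
where for $y\geq 1$ one checks $|g_y(t)|\leq\tfrac 32$ uniformly in $t$ and $g_y(t)\to 0$ pointwise. The first term tends to $0$; the integral term, tested on vectors, is controlled by $\bigl|(\int_\bR g_y\,(1+t^2)^{-1}d\Sigma\,h,u)\bigr|\leq\bigl(\int_\bR \tfrac{g_y^2}{1+t^2}d(\Sigma h,h)\bigr)^{1/2}\bigl(\int_\bR\tfrac{1}{1+t^2}d(\Sigma u,u)\bigr)^{1/2}$, where the second factor is $\leq C\|u\|$ by the global bound and the first $\to 0$ by dominated convergence. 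Hence $\tfrac 1{iy}M(iy)\to\cB$ strongly, so $\cB_M=\cB\in\B(\cH)$ and $\cB_M\geq 0$, which is \eqref{2.3}. (Equivalently, the self-adjoint part $\tfrac 1y\im M(iy)=\cB+\int_\bR(t^2+y^2)^{-1}d\Sigma(t)$ is nonincreasing in $y$ and decreases strongly to $\cB$.)

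For \eqref{2.3.1}--\eqref{2.3.2} I would pass, for each $h$, to the scalar Nevanlinna function $m_h(\l)=(M(\l)h,h)$ with representing measure $d(\Sigma(t)h,h)$, and compute
\begin{gather*}
y\,\im (M(iy)h,h)=(\cB h,h)\,y^2+\int_\bR \frac{y^2}{t^2+y^2}\,d(\Sigma(t)h,h).
\end{gather*}
Since the integrand increases to $1$, the monotone convergence theorem shows the limit is finite if and only if $(\cB h,h)=0$ (equivalently $\cB_M h=0$, as $\cB_M\geq 0$) and $(\Sigma(\bR)h,h)<\infty$; this identifies $\dom\cN_M$ as $\ker\cB_M$ intersected with the form domain of $h\mapsto(\Sigma(\bR)h,h)$, hence a linear manifold. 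For such $h$ one has $\cB h=0$, so $M(iy)h=\cA h+\int_\bR(\frac 1{t-iy}-\frac t{1+t^2})d\Sigma(t)h$; I would then show $\int_\bR\frac 1{t-iy}d\Sigma(t)h\to 0$ and that $\int_\bR\frac t{1+t^2}d\Sigma(t)h$ converges in norm, which yields $\cN_M h=\cA h-\int_\bR\frac t{1+t^2}d\Sigma(t)h$, i.e. \eqref{2.3.2}; linearity of $\cN_M$ is then immediate.

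The genuinely delicate point — the main obstacle — is that the limits in \eqref{2.3} and \eqref{2.3.2} are asserted in the strong operator topology, while $\Sigma$ is only a nondecreasing operator function (not a spectral measure), so the naive identity $\|\int f\,d\Sigma\,h\|^2=\int|f|^2 d(\Sigma h,h)$ fails and $\Sigma(\bR)$ may well be unbounded. The remedy is an \emph{asymmetric} Cauchy--Schwarz estimate
\begin{gather*}
\Bigl|\Bigl(\int_\bR f\,d\Sigma(t)h,\,u\Bigr)\Bigr|\leq\int_\bR |f|\,d(\Sigma(t)h,h)^{1/2}\,d(\Sigma(t)u,u)^{1/2},
\end{gather*}
in which one splits $|f|=ab$ so that $\int a^2 d(\Sigma h,h)$ carries the required decay while $\int b^2 d(\Sigma u,u)$ stays bounded by the global estimate (taking $b=(1+t^2)^{-1/2}$). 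In the proof of \eqref{2.3.2} the decay of the tail $\int_{|t|>R}$ uses exactly the finite $h$-mass $(\Sigma(\bR)h,h)<\infty$ defining $\dom\cN_M$, whereas in \eqref{2.3} the $y$-decay of $g_y$ makes the estimate work for \emph{all} $h$; making this asymmetric bound precise (together with the elementary kernel bounds for $g_y$, $(t-iy)^{-1}$ and $t(1+t^2)^{-1}$) is the crux, after which everything reduces to monotone and dominated convergence for scalar measures via the forms $(M(\l)h,g)$ and polarization.
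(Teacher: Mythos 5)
Your proof is correct. There is no internal proof to compare against: the paper states this proposition as well known and simply cites \cite{Mal92}, and your argument via the operator-valued Nevanlinna representation is precisely the standard route of the cited literature. The details check out: the kernel computation giving $g_y(t)=\frac{1+t^2}{t^2+y^2}+\frac{it(y^2-1)}{y(t^2+y^2)}$, the uniform bound $|g_y(t)|\le\tfrac32$ for $y\ge1$ (the second summand is $\le\tfrac{y|t|}{t^2+y^2}\le\tfrac12$), and the asymmetric Cauchy--Schwarz splitting with $b(t)=(1+t^2)^{-1/2}$; since your resulting bound is independent of $u$ over the unit ball, it gives \emph{norm} convergence $\|\tfrac1{iy}M(iy)h-\cB h\|\to0$, hence the strong limit \eqref{2.3}. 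You also correctly isolate the genuine subtlety (no isometric functional calculus for a mere nondecreasing operator function $\Sigma$, with $\Sigma(\bR)$ possibly unbounded). For \eqref{2.3.1}--\eqref{2.3.2}, the monotone-convergence identification $\dom\cN_M=\ker\cB_M\cap\{h:(\Sigma(\bR)h,h)<\infty\}$ is right (and, usefully, shows the limit in \eqref{2.3.1} always exists in $[0,\infty]$, since $y\im(M(iy)h,h)$ is nondecreasing in $y$), linearity follows from the Minkowski inequality for the seminorm $(\Sigma(\bR)h,h)^{1/2}$, and the tail estimate over $|t|>R$ with finite $h$-mass gives norm convergence of $\int\tfrac t{1+t^2}\,d\Sigma(t)h$ and $\int\tfrac1{t-iy}\,d\Sigma(t)h\to0$, yielding $\cN_Mh=\cA h-\int\tfrac t{1+t^2}\,d\Sigma(t)h$. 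One small caveat: your parenthetical ``equivalently'' remark only establishes $s\text{-}\lim\tfrac1y\im M(iy)=\cB$, which does not by itself prove \eqref{2.3}, because $\tfrac1{iy}M(iy)=\tfrac1y\im M(iy)-\tfrac iy\,{\rm Re}\,M(iy)$ and the real part still needs an estimate --- but your main $g_y$ argument covers the full kernel, so nothing is missing. An alternative, representation-free route would go through the Cayley transform $K(\l)=(M(\l)-i)(M(\l)+i)^{-1}$, in the spirit of the paper's own proof of Proposition \ref{pr2.2}; your integral-representation proof is complete as it stands and arguably more transparent for the asymptotics in question.
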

\begin{proposition}\label{pr2.2}
Let $\tau\in R[\cH]$. Then the subspace $\cH'':=\ker\im \tau(\l) \subset \cH$ does not depend on $\l\in\CR$ and the block representation
\begin{gather}\label{2.4}
\tau(\l)=\begin{pmatrix} \tau_1(\l) & B_1 \cr B_1^* & B_2 \end{pmatrix}:\cH'\oplus\cH''\to \cH'\oplus\cH'', \quad \l\in\CR
\end{gather}
holds with $\cH'=\cH\ominus\cH'',\; B_2=B_2^* \in\B (\cH'')$, $B_1\in\B (\cH'',\cH')$ and the operator-function  $\tau_1(\cd)\in R[\cH']$ such that $\ker \im \tau_1(\l)=\{0\}, \; \l\in \CR$. Moreover, $\tau\in R_c[\cH]$ if and only if $\tau_1\in R_u[\cH']$.
\end{proposition}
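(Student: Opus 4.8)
The plan is to derive everything from a single constancy lemma: \emph{if $h\in\ker\im\tau(\l_0)$ for some $\l_0\in\CR$, then $\tau(\l)h$ does not depend on $\l$ and $h\in\ker\im\tau(\l)$ for all $\l\in\CR$.} To establish it I would invoke the Herglotz--Nevanlinna integral representation of $\tau\in R[\cH]$,
\begin{gather*}
\tau(\l)=\cA+\l\cB+\int_\bR\Big(\tfrac 1{t-\l}-\tfrac t{1+t^2}\Big)\,d\Sigma(t),\qquad \l\in\CR,
\end{gather*}
with $\cA=\cA^*$, $\cB\geq 0$ and a nondecreasing $\B(\cH)$-valued $\Sigma$. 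A direct computation gives, for $\l\in\bC_+$,
\begin{gather*}
\im\tau(\l)=\im\l\Big(\cB+\int_\bR\tfrac{d\Sigma(t)}{|t-\l|^2}\Big).
\end{gather*}
Since $\im\tau(\l)\geq 0$ on $\bC_+$, the inclusion $h\in\ker\im\tau(\l_0)$ is equivalent to $(\im\tau(\l_0)h,h)=0$; as the two summands are nonnegative this forces $(\cB h,h)=0$ and $\int_\bR|t-\l_0|^{-2}\,d(\Sigma(t)h,h)=0$, whence $\cB h=0$ and the scalar measure $(d\Sigma(t)h,h)$ vanishes. Because each increment of $\Sigma$ is a nonnegative operator, the vanishing of this scalar measure yields $d\Sigma(t)h\equiv 0$ as a vector measure; substituting $\cB h=0$ and $d\Sigma(t)h\equiv 0$ into the representation shows that $\tau(\l)h=\cA h$ is independent of $\l$, and the same substitution in the formula for $\im\tau(\l)$ gives $\im\tau(\l)h=0$ for every $\l$. (The half-plane $\bC_-$ is handled symmetrically via $\im\tau(\ov\l)=-\im\tau(\l)$.) In particular $\cH'':=\ker\im\tau(\l)$ is one and the same subspace for all $\l\in\CR$.

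Next I would read off the block representation. Putting $\cH'=\cH\ominus\cH''$, I write $\tau(\l)=(\tau_{jk}(\l))_{j,k=1}^2$ with respect to $\cH'\oplus\cH''$. By the constancy lemma the restriction $\tau(\l)\up\cH''$ is a fixed operator, so its two blocks are constant: $\tau_{12}(\l)\equiv B_1\in\B(\cH'',\cH')$ and $\tau_{22}(\l)\equiv B_2\in\B(\cH'')$. Comparing $\tau(\l)^*$ with $\tau(\ov\l)$ through the identity $\tau(\l)^*=\tau(\ov\l)$ gives $\tau_{21}(\l)\equiv B_1^*$ and $B_2=B_2^*$, while the diagonal block $\tau_1:=\tau_{11}=P_{\cH'}\tau(\cd)\up\cH'$ is holomorphic and satisfies $\tau_1(\l)^*=\tau_1(\ov\l)$. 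Since the off-diagonal and the $(2,2)$ blocks are constant and selfadjointly paired, a short calculation yields
\begin{gather*}
\im\tau(\l)=\diag(\im\tau_1(\l),0):\cH'\oplus\cH''\to\cH'\oplus\cH''.
\end{gather*}
This shows $\im\l\cd\im\tau_1(\l)\geq 0$, so $\tau_1\in R[\cH']$; moreover $\ker\im\tau(\l)=\ker\im\tau_1(\l)\oplus\cH''$ together with $\ker\im\tau(\l)=\cH''$ forces $\ker\im\tau_1(\l)=\{0\}$, as required.

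Finally, the equivalence $\tau\in R_c[\cH]\Llr\tau_1\in R_u[\cH']$ follows from the same block-diagonal formula. Indeed $\ran\im\tau(\l)=\ran\im\tau_1(\l)$, so $\ran\im\tau(\l)$ is closed iff $\ran\im\tau_1(\l)$ is closed. For $\l\in\bC_+$ the operator $\im\tau_1(\l)$ is bounded, nonnegative and has trivial kernel, whence $\ov{\ran\im\tau_1(\l)}=\cH'$; thus its range is closed iff $\im\tau_1(\l)$ is a bijection of $\cH'$, i.e. (by the open mapping theorem) iff $(\im\tau_1(\l))^{-1}\in\B(\cH')$, which is precisely $\tau_1\in R_u[\cH']$.

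I expect the constancy lemma of the first paragraph to be the main obstacle; everything afterwards is bookkeeping with the block decomposition. The lemma is what upgrades the pointwise kernel condition at one $\l_0$ to genuine $\l$-independence of $\cH''$ and, simultaneously, to constancy of $\tau(\cd)$ on $\cH''$. The delicate point there is the passage from the vanishing of the scalar measure $(d\Sigma(t)h,h)$ to the vanishing of the vector measure $d\Sigma(t)h$, which rests on the nonnegativity of the operator increments of $\Sigma$ and on the identification of $\ker\im\tau(\l)$ (kernel of a sign-definite operator) with the kernel of its quadratic form.
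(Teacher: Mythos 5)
Your proof is correct, but it takes a genuinely different route from the paper's. The paper derives the two key constancy facts --- the $\l$-independence of $\cH''=\ker\im\tau(\l)$ and the constancy of $\tau(\l)\up\cH''$ --- from the Cayley transform $K(\l)=(\tau(\l)-i)(\tau(\l)+i)^{-1}$, invoking the Sz.-Nagy--Foias result that for a holomorphic contraction-valued function the subspace $\cH_0=\{h:\|K(\l)h\|=\|h\|\}$ and the restriction $V=K(\l)\up\cH_0$ do not depend on $\l\in\bC_+$; the identities \eqref{2.5} and \eqref{2.6} then translate these facts into statements about $\im\tau$. You obtain the same two facts instead from the operator-valued Herglotz--Nevanlinna representation: the condition $h\in\ker\im\tau(\l_0)$ annihilates the form $(\cB h,h)$ and the scalar measure $(d\Sigma(t)h,h)$, hence (by nonnegativity of the operator increments of $\Sigma$) $\cB h=0$ and $\Sigma(\d)h=0$ for every Borel set $\d$, whence $\tau(\l)h=\cA h$ for all $\l$. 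This is a legitimate substitute for the paper's black box and buys something extra: an explicit formula for the constant block ($B=\cA\up\cH''$ in your notation) and a transparent ``one point implies all points'' statement, at the cost of invoking the integral representation and a little measure-theoretic care (your passage from $(\Sigma(\d)h,h)=0$ to $\Sigma(\d)h=0$, and then to the vanishing of $(\Sigma(\cd)h,g)$ for all $g$, is justified by the square root, respectively Cauchy--Schwarz, for the nonnegative operators $\Sigma(\d)$). The remaining bookkeeping --- constancy of $B_1,B_2$, the symmetry relations from $\tau^*(\l)=\tau(\ov\l)$, the diagonal form of $\im\tau(\l)$, and the closed-range/open-mapping argument for $\tau\in R_c[\cH]\Llr\tau_1\in R_u[\cH']$ --- matches the paper's computations leading to \eqref{2.7} and \eqref{2.8}, and you even supply the open-mapping detail the paper leaves implicit. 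One trivial point to tidy: verify the defining condition of $R_u[\cH']$ for $\l\in\bC_-$ as well, which is immediate since $-\im\tau_1(\l)\geq 0$ there with the same kernel.
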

\begin{proof}
As is known the Cayley transform $K(\l)=(\tau(\l)-i)(\tau(\l)+i)^{-1},\;\l\in\bC_+,$  is a holomorphic contractive operator-function on $\bC_+$ and for each $\l\in\bC_+$ the operator  $\tau(\l)$ admits the representation
\begin{gather}\label{2.5}
f = i(K(\l)-I)h, \quad \tau(\l)f= (K(\l)+I)h, \quad h\in\cH.
\end{gather}
It follows from \eqref{2.5} that
\begin{gather}\label{2.6}
\im (\tau(\l)f,f)=||h||^2 - ||K(\l)h||^2
\end{gather}
for all $\l\in\bC_+, \; h\in\cH$ and $ f=i(K(\l)-I)h$. According to \cite{NaFo} the subspace $\cH_0=\{h\in\cH: ||K(\l)h||=||h||\}$ and the operator $V=K(\l)\up \cH_0 (\in \B(\cH_0,\cH))$ do not depend on $\l\in\bC_+$. Moreover, by \eqref{2.5} and \eqref{2.6} $\ker \im \tau (\l)=(V-I)\cH_0$ and hence the subspace $\cH'':=\ker \im \tau (\l)$ does not depend on $\l\in\bC_+$.  Note  also that by \eqref{2.5} the operator $\tau(\l)\up \cH'' $ is defined by
\begin{gather*}
f = i(V-I)h_0, \quad \tau(\l)f= (V+I)h_0, \quad h_0\in\cH_0.
\end{gather*}
Therefore $B=\tau(\l)\up \cH'' (\in \B (\cH'',\cH))$ does not depend on $\l\in\bC_+$ and, consequently, $\tau(\l)$ admits the block representation
\begin{gather}\label{2.7}
\tau(\l)=\begin{pmatrix} \tau_1(\l) & B_1 \cr \tau_2(\l) & B_2 \end{pmatrix}:\cH'\oplus\cH''\to \cH'\oplus\cH'', \quad \l\in\bC_+
\end{gather}
This implies that
\begin{gather*}
\im \tau(\l)=\begin{pmatrix} \im\tau_1(\l) & \frac 1 {2i} (B_1-\tau_2^*(\l) \cr \frac 1 {2i} ( \tau_2(\l)-B_1^*) &  \im B_2 \end{pmatrix}:\cH'\oplus\cH''\to \cH'\oplus\cH'', \quad \l\in\bC_+
\end{gather*}
and hence $\im B_2 =0, \; \tau_1(\l)=B_1^*$. Therefore by \eqref{2.7} the equality \eqref{2.4} holds with $B_2=B_2^*$, which implies that
\begin{gather}\label{2.8}
\im \tau(\l)=\begin{pmatrix} \im\tau_1(\l) &0 \cr 0 & 0 \end{pmatrix}:\cH'\oplus\cH''\to \cH'\oplus\cH'', \quad \l\in\bC_+.
\end{gather}
Hence $\ker \im \tau_1(\l)=\{0\}$ and the required statements are proved for $\l\in\bC_+$. The same statements for $\l\in\bC_-$ are implied by $\tau(\l)=\tau^*(\ov\l), \; \l\in\bC_-$.
\end{proof}
As is known (see e.g. \cite{DM09}) a function $\tau:\CR\to \C (\cH)$ is referred to the class $\RH$ of Nevanlinna relation valued functions if:

(\romannumeral 1) $\im (f',f)\geq 0, \quad \{f,f'\}\in\tau (\l),\quad \l\in\bC_+$;

(\romannumeral 2)  $(\tau(\l)+i)^{-1}\in \B (\cH), \; \l\in\bC_+,$ and $(\tau (\l)+i)^{-1}$ is a holomorphic operator-function in $\bC_+$;

(\romannumeral 3) $\tau^*(\l)=\tau (\ov\l),\;\l\in\CR$.

A function $\tau\in \wt R(\cH)$ is referred to the class $R(\cH)$ if its values are operators, i.e., if $\mul \tau(\l)=0, \; \l\in\CR$.
If $\tau(\cd)\in R(\cH)$, then $\ov {\dom \tau(\l)}=\cH,\; \l\in\CR$.

 It is clear that $R [\cH]\subset R(\cH)\subset \wt R(\cH)$.

According to \cite{KreLan71} for each function $\tau\in \RH$ the multivalued part $\cK:=\mul \tau(\l)$ of $\tau (\l)$ does not depend on $\l\in\CR$ and the decompositions
\begin{gather}\label{2.9}
\cH=\cH_0 \oplus\cK, \qquad \tau (\l)=\tau_0(\l)\oplus \wh \cK, \quad \l\in\CR
\end{gather}
hold with $\tau_0\in R [\cH_0]$ and $\wh\cK=\wh \mul \tau(\l)=\{0\}\oplus \cK$. The operator function $\tau_0$ is called the operator part of $\tau$.
\begin{definition}\label{def2.3}
A relation valued function $\tau\in \RH$ is referred to the class $\RCH$  ($\wt R_u(\cH)$) if  decompositions  \eqref{2.9} hold with $\tau_0\in R_c [\cH_0]$ (resp. with $\tau_0\in R_u [\cH_0]$).
\end{definition}
Clearly $\wt R_u(\cH)\subset \wt R_c(\cH)\subset \wt R(\cH)$ and  in the case $\dim \cH<\infty$ one has $\RCH=\RH$.

The following corollary is immediate from Proposition \ref{pr2.2}.
\begin{corollary}\label{cor2.4}
The relation valued function $\tau:\CR\to \C (\cH)$ belongs to the class $\RCH$ if and only if there exist a decomposition
\begin{gather}\label{2.9.1}
\cH=\cH_0\oplus\cK=\underbrace{\cH'\oplus\cH''}_{\cH_0}\oplus\cK,
\end{gather}
operators $B_1\in\B(\cH'',\cH')$,  $B_2=B_2^*\in\B (\cH'')$ and an operator-function $\tau_1\in R_u[\cH']$ such that $\tau(\l)$ admits the representation \eqref{2.9} with the operator function
\begin{gather}\label{2.9.1.1}
\tau_0(\l)=\begin{pmatrix} \tau_1(\l) & -B_1\cr -B_1^* & -B_2\end{pmatrix}: \underbrace{\cH'\oplus\cH''}_{\cH_0}\to \underbrace{\cH'\oplus\cH''}_{\cH_0}, \quad \l\in\CR.
\end{gather}
 This means that
\begin{gather}\label{2.9.2}
\tau(\l)=\{\{h\oplus\f,(\tau_1(\l)h- B_1 \f)\oplus (-B_1^*h - B_2\f)\oplus k\}:\, h\oplus\f\in \cH'\oplus\cH'', \, k\in\cK\}.
\end{gather}
\end{corollary}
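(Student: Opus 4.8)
The plan is to read Corollary \ref{cor2.4} off from the operator-part decomposition \eqref{2.9} combined with the block description of Proposition \ref{pr2.2}, so that the only real work is the bookkeeping of one sign convention and a two-directional class identification. For the direct implication, suppose $\tau\in\RCH$. By Definition \ref{def2.3} the decomposition \eqref{2.9} holds, i.e. $\cH=\cH_0\oplus\cK$ with $\cK=\mul\tau(\l)$, $\tau(\l)=\tau_0(\l)\oplus\wh\cK$ and $\wh\cK=\{0\}\oplus\cK$, where the operator part satisfies $\tau_0\in R_c[\cH_0]$. I would then apply Proposition \ref{pr2.2} to the operator-valued function $\tau_0\in R[\cH_0]$. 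This produces the splitting $\cH_0=\cH'\oplus\cH''$ with $\cH''=\ker\im\tau_0(\l)$ (independent of $\l$) together with the block form \eqref{2.4} for $\tau_0$, say $\tau_0(\l)=\left(\begin{smallmatrix}\tau_1(\l)&C_1\\ C_1^*&C_2\end{smallmatrix}\right)$, where $C_2=C_2^*\in\B(\cH'')$, $C_1\in\B(\cH'',\cH')$ and $\tau_1\in R[\cH']$ with $\ker\im\tau_1(\l)=\{0\}$. Since $\tau_0\in R_c[\cH_0]$, the final ("moreover") assertion of Proposition \ref{pr2.2} upgrades this to $\tau_1\in R_u[\cH']$. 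Setting $B_1:=-C_1$ and $B_2:=-C_2$ turns the block form into \eqref{2.9.1.1}, and substituting into $\tau(\l)=\tau_0(\l)\oplus\wh\cK$ and evaluating $\tau_0(\l)$ on a vector $h\oplus\f\in\cH'\oplus\cH''$ yields precisely the explicit graph \eqref{2.9.2}.

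For the converse, suppose $\tau$ is given by \eqref{2.9} with $\tau_0$ of the form \eqref{2.9.1.1} and $\tau_1\in R_u[\cH']$. Since $\tau_1$ is a holomorphic $\B(\cH')$-valued function and the remaining blocks are constant, $\tau_0$ is a holomorphic $\B(\cH_0)$-valued function. Because the off-diagonal blocks form an adjoint pair and $B_2=B_2^*$, a direct computation of the imaginary part (exactly as in \eqref{2.8}) gives $\im\tau_0(\l)=\left(\begin{smallmatrix}\im\tau_1(\l)&0\\ 0&0\end{smallmatrix}\right)$ and $\tau_0^*(\l)=\tau_0(\ov\l)$; hence $\im\l\cdot\im\tau_0(\l)\ge 0$ and $\tau_0\in R[\cH_0]$. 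Moreover $\ran\im\tau_0(\l)=\ran\im\tau_1(\l)=\cH'$ is closed, since $\tau_1\in R_u[\cH']$ means $(\im\tau_1(\l))^{-1}\in\B(\cH')$; thus $\tau_0\in R_c[\cH_0]$, and by Definition \ref{def2.3} this is exactly the assertion $\tau\in\RCH$.

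I do not expect a genuine obstacle, since the statement is a corollary and essentially all of its content is supplied by Proposition \ref{pr2.2} through the operator-part reduction \eqref{2.9}. The only points that demand care are the sign relabeling $B_1=-C_1$, $B_2=-C_2$ (evidently chosen to match the conventions used later in the Krein formula) and the observation that the class identification is needed in both directions; here the clause ``$\tau\in R_c[\cH]$ if and only if $\tau_1\in R_u[\cH']$'' of Proposition \ref{pr2.2} already carries both implications, so no additional argument beyond the elementary imaginary-part and range computations above is required.
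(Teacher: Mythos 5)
Your proof is correct and is precisely the argument the paper intends: the paper gives no separate proof, stating that the corollary ``is immediate from Proposition \ref{pr2.2}'' applied to the operator part $\tau_0$ in \eqref{2.9}, which is exactly your derivation, including the sign relabeling $B_1=-C_1$, $B_2=-C_2$ and the use of the ``moreover'' clause of Proposition \ref{pr2.2} for both directions of the class identification. The only detail you leave tacit in the converse is that $\tau=\tau_0\oplus\wh\cK$ with $\tau_0\in R[\cH_0]$ indeed satisfies conditions (i)--(iii) of membership in $\RH$ before Definition \ref{def2.3} can be invoked, a routine verification the paper likewise suppresses.
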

\begin{remark}\label{rem2.4a}
In the following for the relation valued function $\tau\in\RCH$ represented in accordance with Corollary \ref{cor2.4} by \eqref{2.9.1} and \eqref{2.9.2} we will use the notation $\tau =\{\cH'\oplus\cH''\oplus\cK, B_1,B_2, \tau_1\}$.
\end{remark}

\subsection{Boundary triplets and Weyl function}
Recall that a linear relation $T$ in $\cH$ is called: (i)~symmetric
if $T\subset T^*$ or, equivalently, if
\begin{gather}\label{2.9.2.1}
(f',g)-(f,g')=0, \quad \{f,f'\}, \{g,g'\}\in T;
\end{gather}

(ii)~self-adjoint if $T=T^*$ (and hence $T\in \C (\cH)$).

Assume that $\t$ is a symmetric relation in $\cH$ with a closed multivalued part  $\mul\t$. Then
\begin{gather}\label{2.9.3}
\cH=\cH_0\oplus\cK, \qquad \t=B\oplus \wh\mul \t=\{\{h, Bh+k\}:h\in \dom\t, \, k\in\mul\t\},
\end{gather}
where $\cK=\mul\t,\; \cH_0=\cH\ominus \mul\t$ and $B$ is a  symmetric operator in $\cH_0$ (the operator part of $\t$) with  $\dom B=\dom\t\subset \cH_0$. Clearly, $\t$ is closed (self-adjoint) if and only if $B$ is closed (self-adjoint).

The following lemma will be useful in the sequel.
\begin{lemma}\label{lem2.4b}
Let $\t$ be a symmetric linear relation in $\cH$. Then:

{\rm (i)} $\; \dom\t\subset \cH\ominus\mul\t$.

{\rm (ii)} If $\ov{\mul\t}=\mul\t$ and $\cH\ominus \mul\t\subset \dom\t$, then $\t^*=\t$.

{\rm (iii)} If $\dim\cH<\infty$, then the condition $\cH\ominus \mul\t\subset \dom \t$ is equivalent to  $ \t^*=\t$.
\end{lemma}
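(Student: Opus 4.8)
The plan is to establish the three parts in order, letting (i) feed into (ii) and both (i),(ii) feed into (iii). Throughout I will use only the symmetry identity \eqref{2.9.2.1}, the decomposition \eqref{2.9.3}, and the elementary adjoint identity $\mul\t^*=(\dom\t)^\perp$ (which follows at once from the definition of $\t^*$: $\{0,g'\}\in\t^*$ means $(g',f)=0$ for every $f\in\dom\t$).

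For (i), I would take an arbitrary $\{h_0,h_1\}\in\t$, so that $h_0\in\dom\t$, together with an arbitrary multivalued element $\{0,k\}\in\t$, so that $k\in\mul\t$. Applying the symmetry identity \eqref{2.9.2.1} to this pair gives $(h_1,0)-(h_0,k)=0$, i.e. $(h_0,k)=0$. Since $k\in\mul\t$ is arbitrary, $h_0\perp\mul\t$, that is $h_0\in\cH\ominus\mul\t$; as $h_0$ ranged over $\dom\t$, this is exactly $\dom\t\subset\cH\ominus\mul\t$.

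For (ii), I would first combine (i) with the hypothesis $\cH\ominus\mul\t\subset\dom\t$ to get $\dom\t=\cH\ominus\mul\t=:\cH_0$, a \emph{closed} subspace since $\mul\t$ is closed. As $\t$ is symmetric we already have $\t\subset\t^*$, so only $\t^*\subset\t$ remains. Take $\{g,g'\}\in\t^*$; testing against $\{0,k\}\in\t$ shows $(g,k)=0$ for all $k\in\mul\t$, whence $g\in\cH_0=\dom\t$, so there is $g''$ with $\{g,g''\}\in\t\subset\t^*$. Subtracting, $\{0,g'-g''\}\in\t^*$, so $g'-g''\in\mul\t^*=(\dom\t)^\perp=\cH_0^\perp=\mul\t$; hence $\{0,g'-g''\}\in\wh\mul\t\subset\t$, and therefore $\{g,g'\}=\{g,g''\}+\{0,g'-g''\}\in\t$. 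This yields $\t^*=\t$. (Alternatively one may invoke the operator part decomposition \eqref{2.9.3}: here $\dom B=\dom\t=\cH_0$ makes the symmetric operator $B$ everywhere defined, hence self-adjoint, and by the remark after \eqref{2.9.3}, $\t$ is self-adjoint.)

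For (iii), assume $\dim\cH<\infty$. The forward implication is immediate from (ii): in finite dimensions $\mul\t$ is automatically closed, so the hypothesis $\cH\ominus\mul\t\subset\dom\t$ already gives $\t^*=\t$. For the converse, suppose $\t^*=\t$. Then $\mul\t=\mul\t^*=(\dom\t)^\perp$, so $\cH\ominus\mul\t=(\mul\t)^\perp=(\dom\t)^{\perp\perp}=\ov{\dom\t}$; since $\dim\cH<\infty$, $\dom\t$ is closed, whence $\cH\ominus\mul\t=\dom\t\subset\dom\t$, as required. The step I expect to be the real crux is precisely this converse (equivalently, the role of the closedness/full-domain hypotheses in (ii)): the identity $\cH\ominus\mul\t=\ov{\dom\t}$ holds for \emph{any} self-adjoint $\t$, but upgrading the closure $\ov{\dom\t}$ to $\dom\t$ itself genuinely requires finite-dimensionality, since in infinite dimensions a self-adjoint relation can have a non-closed domain with $\cH\ominus\mul\t$ strictly larger than $\dom\t$.
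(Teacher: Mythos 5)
Your proof is correct. Part (i) coincides with the paper's (the symmetry identity \eqref{2.9.2.1} applied to $\{h_0,h_1\}\in\t$ and $\{0,k\}\in\t$), but for (ii) and the converse direction of (iii) your main line is genuinely different: you argue directly at the level of the relation, using the elementary adjoint identity $\mul\t^*=(\dom\t)^\perp$ to verify $\t^*\subset\t$ element-by-element, and in (iii) to get $\cH\ominus\mul\t=\ov{\dom\t}=\dom\t$ in finite dimensions. The paper instead channels both statements through the operator-part decomposition \eqref{2.9.3}: for (ii), the hypotheses force $\dom B=\dom\t=\cH_0$, so the symmetric operator $B$ is everywhere defined, hence $B=B^*\in\B(\cH_0)$ (in effect Hellinger--Toeplitz), and therefore $\t^*=\t$; for (iii), with $\dim\cH_0<\infty$ one has $\t^*=\t$ iff $B^*=B$ iff $\dom B=\cH_0$, i.e.\ iff $\dom\t=\cH\ominus\mul\t$. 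Your parenthetical alternative for (ii) is precisely the paper's argument, so you have both routes in hand. As for what each buys: your direct computation is more self-contained --- it needs neither the decomposition \eqref{2.9.3} nor the fact that an everywhere-defined symmetric operator is self-adjoint --- and your closing remark correctly isolates where finite-dimensionality is truly used (for any self-adjoint $\t$ one has $\cH\ominus\mul\t=\ov{\dom\t}$, but the domain of a self-adjoint relation need not be closed in infinite dimensions, so the inclusion $\cH\ominus\mul\t\subset\dom\t$ can fail there); the paper's route is shorter and reuses the structural decomposition \eqref{2.9.3} on which the rest of the article relies.
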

\begin{proof}
(i) Statement (i) directly follows from \eqref{2.9.2.1}

(ii) Assume that $\ov{\mul\t}=\mul\t$ and $\cH\ominus \mul\t\subset \dom\t$. Then in the representation \eqref{2.9.3} of $\t$ one has $\dom B=\dom\t=\cH_0$. Therefore  $B=B^*\in\B(\cH_0)$ and, consequently, $\t^*=\t$.

(iii) Assume that  $\dim \cH <\infty$. Then $\mul\t$ is closed and hence decompositions \eqref{2.9.3} hold. Moreover, $\dim\cH_0<\infty $ and therefore
$\t^*=\t$ if and only if $\dom B=\cH_0$ or, equivalently, $\dom \t=\cH\ominus \mul\t$. This yields statement (ii).
\end{proof}
In the following we denote by $A$ a closed symmetric linear relation (in particular closed not necessarily densely defined symmetric  operator) in a Hilbert space $\gH$. Let  $\gN_\l(A)=\ker (A^*-\l)\; (\l\in\CR)$ be a defect subspace of $A$, let $\wh\gN_\l(A)=\{\{f,\l f\}:\, f\in
\gN_\l(A)\}$, let $\gN(A)=\gH\ominus\dom A (= \mul A^*)$ and let $n_\pm (A):=\dim \gN_\l(A)\leq\infty, \;
\l\in\bC_\pm,$ be deficiency indices of $A$. Denote by $\ex $ the
set of all proper extensions of $A$ (i.e., the set of all
relations $\wt A$ in $\gH$ such that $A\subset\wt A\subset
A^*$) and  by $\cex$ the set of closed  extensions  $\wt A\in\ex $. Recall that two extensions $\wt A_1,\wt A_2 \in\cex$ are called transversal if $\wt A_1\cap \wt A_2=A$ and $\wt A_1\wh + \wt A_2=A^*$.
\begin{definition}\label{def2.5}$ \,$\cite{GorGor,Bru77}
A collection $\Pi=\bt$ consisting of a Hilbert space $\cH$ and linear mappings   $\G_j:A^*\to \cH, \; j\in\{0,1\},$  is called a
boundary triplet for $A^*$, if the mapping $\G=(\G_0,\G_1)^\top $ from $A^*$ into
$\cH\oplus\cH$ is surjective and the following  Green's
identity  holds:
\begin {equation}\label{2.10}
(f',g)-(f,g')=(\G_1  \wh f,\G_0 \wh g)- (\G_0 \wh f,\G_1
\wh g), \quad \wh f=\{f,f'\}, \; \wh g=\{g,g'\}\in A^*.
\end{equation}
\end{definition}
\begin{proposition}\label{pr2.5} $\,$ \cite{GorGor,Mal92} If $\Pi=\bt$ is a boundary triplet for $A^*$, then $n_+(A)=n_-(A)=\dim\cH$. Conversely, for each symmetric relation $A$ in $\gH$ with equal deficiency indices  $n_+(A)=n_-(A)$ there exists a boundary triplet for $A^*$.
\end{proposition}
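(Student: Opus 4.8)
The plan is to prove the two implications separately, using von Neumann's formula together with the abstract Green identity \eqref{2.10}.

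\emph{Necessity.} First I would identify the kernel of the map $\G=(\G_0,\G_1)^\top$. If $\wh f=\{f,f'\}\in\ker\G$, then $\G_0\wh f=\G_1\wh f=0$, so by \eqref{2.10} we get $(f',g)-(f,g')=0$ for every $\wh g=\{g,g'\}\in A^*$; this means $\wh f\in(A^*)^*=A$, the last equality holding because $A$ is closed. Conversely, for $\wh f\in A=(A^*)^*$ the left-hand side of \eqref{2.10} vanishes for all $\wh g\in A^*$, and since $(\G_0\wh g,\G_1\wh g)$ ranges over all of $\cH\oplus\cH$ by surjectivity, this forces $\G_0\wh f=\G_1\wh f=0$; hence $\ker\G=A$. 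Surjectivity of $\G$ onto $\cH\oplus\cH$ then induces a linear isomorphism $A^*/A\cong\cH\oplus\cH$, so $\dim(A^*/A)=2\dim\cH$. By the von Neumann decomposition $A^*=A\wh+\wh\gN_i(A)\wh+\wh\gN_{-i}(A)$ (a direct sum) one has $\dim(A^*/A)=n_+(A)+n_-(A)$, whence $n_+(A)+n_-(A)=2\dim\cH$.

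To split this into $n_+(A)=n_-(A)=\dim\cH$ I would compare signatures. Consider the Hermitian form $\langle\wh f,\wh g\rangle:=\tfrac1i((f',g)-(f,g'))$ on $A^*$; its radical is exactly $\ker\G=A$, so it descends to a nondegenerate Hermitian form on $A^*/A$. Evaluated in von Neumann coordinates $\wh f=\wh f_A+\{f_i,if_i\}+\{f_{-i},-if_{-i}\}$ (with $\wh f_A\in A$, $f_{\pm i}\in\gN_{\pm i}(A)$) it equals $\langle\wh f,\wh f\rangle=2\|f_i\|^2-2\|f_{-i}\|^2$, so its inertia is $(n_+(A),n_-(A))$. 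Evaluated through \eqref{2.10}, i.e. in the coordinates $(\G_0\wh f,\G_1\wh f)\in\cH\oplus\cH$, the same form equals $2\,\im(\G_1\wh f,\G_0\wh f)$, a nondegenerate Hermitian form on $\cH\oplus\cH$ with inertia $(\dim\cH,\dim\cH)$. By Sylvester's law of inertia the two inertias coincide, giving $n_+(A)=n_-(A)=\dim\cH$. (Alternatively one checks that $A_0:=\ker\G_0$ is a symmetric extension of $A$ with $\dim(A_0/A)=\dim\cH=\tfrac12(n_++n_-)$, hence self-adjoint, and the existence of a self-adjoint extension of $A$ inside $\gH$ already forces $n_+=n_-$.)

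\emph{Sufficiency.} Conversely, assume $n_+(A)=n_-(A)$. Fix a unitary $V:\gN_i(A)\to\gN_{-i}(A)$, which exists since the defect subspaces have equal dimension, and put $\cH:=\gN_i(A)$. Writing each $\wh f\in A^*$ in its (unique) von Neumann form $\wh f=\wh f_A+\{f_i,if_i\}+\{f_{-i},-if_{-i}\}$, define
\begin{gather*}
\G_0\wh f=f_i+V^{-1}f_{-i},\qquad \G_1\wh f=i\,(f_i-V^{-1}f_{-i}).
\end{gather*}
Both maps are linear because the von Neumann components depend linearly on $\wh f$. Surjectivity of $\G=(\G_0,\G_1)^\top$ follows by solving the resulting $2\times2$ system: given $(h_0,h_1)\in\cH\oplus\cH$, the choice $f_i=\tfrac12(h_0-ih_1)$ and $V^{-1}f_{-i}=\tfrac12(h_0+ih_1)$ yields $\G_0\wh f=h_0$, $\G_1\wh f=h_1$. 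Finally I would verify Green's identity by the computation already used above: since $A$ lies in the radical of the boundary form and the $\gN_i$--$\gN_{-i}$ cross terms vanish, $(f',g)-(f,g')=2i(f_i,g_i)-2i(f_{-i},g_{-i})$, and, using that $V$ is unitary, this equals $(\G_1\wh f,\G_0\wh g)-(\G_0\wh f,\G_1\wh g)$. Hence $\Pi=\bt$ is a boundary triplet for $A^*$.

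The only routine computations are the evaluation of the boundary form in von Neumann coordinates and the check that the displayed $\G_0,\G_1$ reproduce the right-hand side of \eqref{2.10}; the one step demanding care is the identification $\ker\G=A$, which relies on $(A^*)^*=A$ (valid because $A$ is closed) and on the fact that von Neumann's decomposition holds verbatim for linear relations, not merely for densely defined operators.
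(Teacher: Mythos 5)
The paper itself offers no proof of this proposition---it is quoted from \cite{GorGor,Mal92}---so your attempt must be measured against the standard argument in those references, which is essentially what you reproduce. Your sufficiency half is complete and correct: the von Neumann decomposition $A^*=A\wh+\wh\gN_i(A)\wh+\wh\gN_{-i}(A)$ does hold verbatim for closed symmetric relations, your $\G_0,\G_1$ are well defined by uniqueness of the components, the boundary-form computation (vanishing cross terms, unitarity of $V$) verifies \eqref{2.10}, and the $2\times2$ solve gives surjectivity. The identification $\ker\G=A$ via $(A^*)^*=A$ is also argued correctly and is the right first step for necessity.

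The genuine soft spot is the splitting step $n_+(A)=n_-(A)=\dim\cH$. Sylvester's law of inertia is a finite-dimensional theorem, whereas the proposition is needed here with $n_\pm(A)\leq\infty$ and $\dim\cH\leq\infty$: for indefinite inner product spaces the invariance of the signature of a fundamental decomposition requires a separate argument (e.g.\ injecting an arbitrary positive definite subspace into the positive component of a fixed decomposition), and even that yields equality of algebraic dimensions, which must then be reconciled with the Hilbert-space dimensions $n_\pm(A)$---harmless only because the paper assumes separability throughout, a point you never invoke. Your parenthetical alternative suffers from the same defect: the inference ``$\dim(A_0/A)=\tfrac12(n_++n_-)$, hence $A_0$ self-adjoint'' is a finite-dimensional count and fails as stated for infinite defect numbers. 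The clean, dimension-free repair, and the route of the cited references, is to prove $A_0:=\ker\G_0=A_0^*$ directly: symmetry is immediate from \eqref{2.10}, and if $\wh g\in A_0^*\,(\subset A^*$, since $A\subset A_0)$, then \eqref{2.10} gives $(\G_1\wh f,\G_0\wh g)=0$ for all $\wh f\in\ker\G_0$, where surjectivity of $\G$ lets $\G_1\wh f$ sweep all of $\cH$, forcing $\G_0\wh g=0$. Self-adjointness of $A_0$ inside $\gH$ yields $n_+(A)=n_-(A)$, and the decomposition $A^*=A_0\wh+\wh\gN_\l(A)$, $\l\in\CR$, shows that $\G_0\up\wh\gN_\l(A)$ maps $\wh\gN_\l(A)$ bijectively onto $\cH$---precisely the fact the paper records in Proposition \ref{pr2.8}---whence $n_\pm(A)=\dim\cH$ without any inertia argument.
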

\begin{proposition}\label{pr2.6}$\,$ \cite{GorGor,Mal92}
Let $\Pi=\bt$ be a boundary triplet for  $A^*$. Then:

{\rm (\romannumeral 1)}   $\ker \G=A$ and $\G$ is a bounded
operator from $A^*$ onto $\cH\oplus\cH$.

{\rm (\romannumeral 2)}   The
mapping
\begin {equation}\label{2.10.1}
\t\to  A_\t :=\{ \hat f\in A^*:\{\G_0  \hat f,\G_1 \hat f \}\in
\t\}
\end{equation}
establishes a bijective correspondence between all linear
relations  $\t$ in $\cH$ and all extensions $ \wt A= A_\t\in\ex$.
In the case $\t\in \B(\cH)$ one has $ A_\t=\{ \hat f\in A^*:\G_1 \hat f = \t  \G_0  \hat f\}$

{\rm (\romannumeral 3)} $(A_\t)^*=A_{\t^*}$

{\rm (\romannumeral 4)} $\ov A_\t=A_{\ov\t}$ and hence $A_\t\in\cex$ if and only if $\t \in\C (\cH)$

{\rm (\romannumeral 5)} $ A_\t$ is  symmetric (self-adjoint) if  and only if $\t$ is symmetric (resp. self-adjoint)

{\rm (\romannumeral 6)} The equality
\begin {equation}\label{2.11}
A_0:=\ker\G_0=\{\hat f\in A^* :\G_0\hat f=0\}
\end{equation}
defines a self-adjoint extension $A_0$  of $A$.

{\rm (\romannumeral 7)} The extensions $A_0$ and $A_\t\in \cex$ are transversal if and only if $\t\in\B (\cH)$.
\end{proposition}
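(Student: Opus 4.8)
The plan is to derive all seven assertions from the two defining features of a boundary triplet --- surjectivity of $\G=(\G_0,\G_1)^\top$ and Green's identity \eqref{2.10} --- treating (i) and (ii) as the foundation and then transporting relation-theoretic operations between $\t$ and $A_\t$. First I would prove (i). The inclusion $A\subset\ker\G$ holds because for $\wh f\in A$ and arbitrary $\wh g\in A^*$ the left-hand side of \eqref{2.10} vanishes (as $A\subset A^*$), so $(\G_1\wh f,\G_0\wh g)-(\G_0\wh f,\G_1\wh g)=0$; since $(\G_0\wh g,\G_1\wh g)$ runs over all of $\cH\oplus\cH$ by surjectivity, this forces $\G_0\wh f=\G_1\wh f=0$. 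Conversely, if $\wh f\in\ker\G$ then \eqref{2.10} gives $(f',g)-(f,g')=0$ for every $\{g,g'\}\in A^*$, which is exactly the condition $\wh f\in(A^*)^*=A$. For the boundedness of $\G$ I would pass to the quotient: $\ker\G=A$ is closed, so $\G$ induces a bijection $A^*/A\to\cH\oplus\cH$, and because $\dim\cH=n_\pm(A)$ matches the defect numbers (von Neumann's formula), this is an isomorphism of the corresponding Hilbert spaces; boundedness then follows from the closed graph theorem. This analytic point is the one genuine obstacle --- the remaining parts are algebra, and the induced isomorphism $A^*/A\cong\cH\oplus\cH$ will be reused to make lattice operations (intersection, sum) of extensions correspond to those of parameters.

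Next I would treat (ii). That each $A_\t$ lies in $\ex$ is immediate: $\{0,0\}\in\t$ gives $A=\ker\G\subset A_\t$, while $A_\t\subset A^*$ by construction. Surjectivity of $\t\mapsto A_\t$ follows by setting $\t:=\{\{\G_0\wh f,\G_1\wh f\}:\wh f\in\wt A\}$ for a given $\wt A\in\ex$ and checking $A_\t=\wt A$, where the nontrivial inclusion uses $\ker\G=A\subset\wt A$. Injectivity and the monotonicity $\t_1\subset\t_2\Llr A_{\t_1}\subset A_{\t_2}$ both follow from surjectivity of $\G$, since any $\{h_0,h_1\}\in\t_j$ is realized as $\{\G_0\wh f,\G_1\wh f\}$ for some $\wh f\in A^*$. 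The operator formula in the case $\t\in\B(\cH)$ is just the reading of the graph of $\t$.

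The adjoint computation in (iii) is the workhorse for the rest. Since $A\subset A_\t$, one has $(A_\t)^*\subset A^*$, and for $\wh g\in A^*$ the condition $\wh g\in(A_\t)^*$ means $(f',g)-(f,g')=0$ for all $\wh f\in A_\t$; by \eqref{2.10} and surjectivity of $\G$ this reads $(h_1,\G_0\wh g)-(h_0,\G_1\wh g)=0$ for all $\{h_0,h_1\}\in\t$, which is precisely $\{\G_0\wh g,\G_1\wh g\}\in\t^*$. Hence $(A_\t)^*=A_{\t^*}$. Statement (iv) is then obtained by iterating (iii): $\ov{A_\t}=(A_\t)^{**}=A_{(\t^*)^*}=A_{\ov\t}$, so by injectivity from (ii), $A_\t\in\cex$ iff $\t=\ov\t$, i.e. $\t\in\C(\cH)$.

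Finally, (v), (vi) and (vii) follow formally. For (v) I would compare $A_\t$ with $(A_\t)^*=A_{\t^*}$ through the monotone bijection of (ii): symmetry of $A_\t$ is $A_\t\subset A_{\t^*}$, equivalent to $\t\subset\t^*$, and self-adjointness is the corresponding equality. For (vi) I would note $A_0=\ker\G_0=A_{\t_0}$ with $\t_0=\{0\}\oplus\cH=\{\{0,h\}:h\in\cH\}$, which is self-adjoint in $\cH$ ($\t_0^*=\t_0$), so (v) yields $A_0=A_0^*$. For (vii), using the induced isomorphism $A^*/A\cong\cH\oplus\cH$ from (i), intersections and sums correspond to those of parameters: $A_0\cap A_\t=A_{\t_0\cap\t}$ with $\t_0\cap\t=\wh\mul\t$, so $A_0\cap A_\t=A$ iff $\mul\t=\{0\}$; and $A_0\wh + A_\t=A_{\t_0\wh + \t}$ with $\t_0\wh + \t=\dom\t\oplus\cH$, so $A_0\wh + A_\t=A^*$ iff $\dom\t=\cH$. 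Together with $\t\in\C(\cH)$ and the closed graph theorem, these two conditions are exactly $\t\in\B(\cH)$.
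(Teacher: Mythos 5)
The paper itself gives no proof of Proposition \ref{pr2.6} --- it is quoted from \cite{GorGor,Mal92} --- so your proposal can only be compared with the standard argument from those sources, and that is essentially what you have reconstructed. Parts (ii)--(vii) are correct and complete: deriving $A\subset\ker\G$ by testing Green's identity against all $\wh g\in A^*$ and the converse from $(A^*)^*=A$; bijectivity and monotonicity of $\t\mapsto A_\t$ from $\ker\G=A$ and surjectivity of $\G$; the adjoint identity $(A_\t)^*=A_{\t^*}$ via Green's identity (correctly noting $(A_\t)^*\subset A^*$ first, so that Green's identity applies); (iv) by iterating (iii); (vi) by the observation $A_0=A_{\t_0}$ with the self-adjoint relation $\t_0=\{0\}\oplus\cH$; and (vii) from the identities $A_0\cap A_\t=A_{\wh\mul\t}$ and $A_0\wh + A_\t=A_{\t'}$ with $\t'=\{\{h,k\}:h\in\dom\t,\ k\in\cH\}$, which you justify properly by lifting parameters through the surjection $\G$, combined with the closed graph theorem for $\t$. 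This is exactly the standard route.

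The one soft spot is the boundedness of $\G$ in (i). Your stated justification --- that the induced bijection $A^*/A\to\cH\oplus\cH$ is an isomorphism ``because $\dim\cH=n_\pm(A)$ matches the defect numbers'' --- does not work: equality of dimensions carries no topological information when $n_\pm(A)=\infty$, and the equality $\dim\cH=n_\pm(A)$ is itself the separate Proposition \ref{pr2.5}, not an available input. The closed graph theorem is the right tool, but to invoke it you must first verify that $\G$, viewed as an everywhere defined operator from the Hilbert space $A^*$ (graph norm) to $\cH\oplus\cH$, is closed; an everywhere defined linear bijection between Banach spaces can be unbounded if its graph is not closed. Closedness does follow from the two axioms by the limiting argument you omitted: if $\wh f_n\to\wh f$ in $A^*$ and $\G\wh f_n\to\{h_0,h_1\}$, pass to the limit in $(f_n',g)-(f_n,g')=(\G_1\wh f_n,\G_0\wh g)-(\G_0\wh f_n,\G_1\wh g)$ and compare with the same identity for $\wh f$; this gives $(\G_1\wh f-h_1,\G_0\wh g)=(\G_0\wh f-h_0,\G_1\wh g)$ for all $\wh g\in A^*$, and surjectivity of $\G$ forces $\G_0\wh f=h_0$, $\G_1\wh f=h_1$. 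With this short insertion (i), and hence the whole proposal, is complete.
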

The following lemma will be useful in the sequel.
\begin{lemma}\label{lem2.7}
Assume that $\Pi=\bt$ is a boundary triplet for $A^*$,
\begin{gather}\label{2.19}
\cH=\cH'\oplus\cH''\oplus\cK
\end{gather}
and the block representations  of $\G_0$ and $\G_1$ are
\begin{gather}\label{2.20}
\G_0=\begin{pmatrix}\G_{01} \cr  \G_{02}\cr \G_{03}\end{pmatrix} :A^*\to \cH'\oplus\cH''\oplus\cK, \qquad \G_1=\begin{pmatrix}\G_{11} \cr  \G_{12}\cr \G_{13}\end{pmatrix}:A^*\to \cH'\oplus\cH''\oplus\cK.
\end{gather}
Moreover, let $B\in\cB(\cH'',\cH'\oplus\cH'')$ be the operator with the block representation
\begin{gather}\label{2.20.0}
B=\begin{pmatrix} B_1 \cr B_2 \end{pmatrix}:\cH''\to \cH'\oplus\cH''
\end{gather}
such that $B_2=B_2^*$ (this means that $B$ is a bounded symmetric operator in $\cH'\oplus\cH''$ with the domain $\dom B=\cH''$ )and let $\t_0\in \C (\cH)$ be given by
\begin{gather}\label{2.20.1}
\t_0=B\oplus\wh \cK=\{\{h,B_1h\oplus B_2h\oplus k \}:h\in\cH'',\, k\in\cK\}
\end{gather}
with $\wh \cK=\wh\mul \t_0=\{0\}\oplus\cK$.
Then:

{\rm (\romannumeral 1)}The equalities
\begin{gather}
S:=A_{\t_0}=\{\wh f \in A^*:\G_{01}\wh f=0, \, \G_{03}\wh f=0, \,\G_{11}\wh f=B_1 \G_{02}\wh f, \,\G_{12}\wh f=B_2 \G_{02}\wh f \}\label{2.21}\\
S^*=\{\wh f \in A^*: \G_{03}\wh f=0, \,\G_{12}\wh f=B_1^* \G_{01}\wh f + B_2 \G_{02}\wh f \}\label{2.22}
\end{gather}
define a symmetric extension  $S\in\cex$ and its adjoint $S^*$.

{\rm (\romannumeral 2)} The collection  $\Pi'=\{\cH',\G_0',\G_1'\}$ with operators $\G_j':S^*\to \cH'$ defined by
\begin{gather}\label{2.23}
\G_0' \wh f=\G_{01}\wh f, \qquad \G_1' \wh f=\G_{11}\wh f -B_1 \G_{02}\wh f, \quad \wh f\in S^*
\end{gather}
is a boundary triplet for $S^*$.

{\rm (\romannumeral 3)} Let $\t$ be a linear relation in $\cH'$ defined by
\begin{gather*}
\t=\t'\dotplus \wh\mul \t= \{\{h,\t' h+ h'\}: h\in\dom \t',\, h'\in\mul \t\},
\end{gather*}
with a linear operator $\t':\dom \t' \to \cH'\; (\dom\t' \subset \cH')$ and let the extension  $\wt A\in {\rm ext}(S)$ be given by  $\wt A=S_\t $ (in the triplet $\Pi'$). Moreover, let $\wt\t':\dom \t' \oplus\cH''\to \cH'\oplus\cH''$ be a linear operator, defined by
\begin{gather*}
\wt \t'= \begin{pmatrix} \t' & B_1 \cr B_1^*\up \dom \t' & B_2 \end{pmatrix}:\dom \t'\oplus\cH''\to \cH'\oplus\cH''
\end{gather*}
and let $\wt \t$ be a linear relation in $\cH$ given by
\begin{gather}
\mul \wt \t=\mul \t \oplus\cK\label{2.23a}\\
\wt\t:=\wt \t' \dotplus \wh\mul \wt\t=\{\{h\oplus\f, (\t'h +B_1\f+h')\oplus (B_1^*h +B_2\f)\oplus k\}:\qquad\qquad \label{2.23.0}\\
\qquad\qquad\qquad\qquad\qquad\qquad h\oplus\f\in\dom \t'\oplus\cH'',h'\in \mul\t , k\in\cK\}.\nonumber
\end{gather}
Then $\wt A \in \ex$ and  $\wt A =A_{\wt\t} $ (in the triplet $\Pi$).

If in addition $\t\in\B (\cH')$, then
\begin{gather*}
\wt \t'= \begin{pmatrix} \t & B_1 \cr B_1^* & B_2 \end{pmatrix}:\cH'\oplus\cH''\to \cH'\oplus\cH''.
\end{gather*}
and \eqref{2.23.0} takes the form
\begin{gather}\label{2.23.0.1}
\wt\t=\wt \t'\oplus\wh\cK=\{h\oplus\f, (\t +B_1\f)\oplus (B_1^*h +B_2\f)\oplus k\}:h\oplus\f\in\cH'\oplus\cH'', k\in\cK\}
\end{gather}
\end{lemma}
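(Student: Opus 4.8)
The plan is to treat the three statements in turn, reducing each to the dictionary between relations $\t$ in $\cH$ and proper extensions $A_\t$ furnished by Proposition \ref{pr2.6}.

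For (i) I would first note that $\t_0$ in \eqref{2.20.1} is exactly a closed symmetric relation of the form \eqref{2.9.3}: its operator part is the bounded operator $B$, which is symmetric precisely because $B_2=B_2^*$ (indeed $(Bh,g)-(h,Bg)=(B_2h,g)-(h,B_2g)$ for $h,g\in\cH''$), and its multivalued part is $\cK$. Hence by Proposition \ref{pr2.6}(v) the relation $S=A_{\t_0}$ is symmetric and by (iv) it is closed, so $S\in\cex$. To get \eqref{2.21} I would simply unwind $\{\G_0\wh f,\G_1\wh f\}\in\t_0$: membership forces $\G_0\wh f\in\cH''$, i.e. $\G_{01}\wh f=0$ and $\G_{03}\wh f=0$, and then $\G_1\wh f=B_1\G_{02}\wh f\oplus B_2\G_{02}\wh f\oplus k$ with $k\in\cK$ free, which is exactly the four conditions in \eqref{2.21}. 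For \eqref{2.22} I would compute $\t_0^*$ directly from the definition of the adjoint relation: testing $\{g,g'\}$, written $g=g_1\oplus g_2\oplus g_3$ and $g'=g_1'\oplus g_2'\oplus g_3'$ in \eqref{2.19}, against all $\{h,B_1h\oplus B_2h\oplus k\}\in\t_0$ and separating the contributions of $h$ and $k$ yields $g_3=0$ and the single relation $g_2'=B_1^*g_1+B_2g_2$. Feeding this $\t_0^*$ into $S^*=A_{\t_0^*}$ (Proposition \ref{pr2.6}(iii)) produces \eqref{2.22}.

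For (ii) the two things to check are Green's identity and surjectivity. For Green's identity I would start from \eqref{2.10} for $\Pi$ applied to $\wh f,\wh g\in S^*\subset A^*$, expand the right-hand side in the block decomposition \eqref{2.19}, and then invoke the two defining constraints of $S^*$ from \eqref{2.22}: the vanishing of $\G_{03}\wh f=\G_{03}\wh g=0$ kills all $\cK$-cross-terms, while substituting $\G_{12}\wh f=B_1^*\G_{01}\wh f+B_2\G_{02}\wh f$ (and similarly for $\wh g$) reduces the $\cH''$-part. Here $B_2=B_2^*$ makes the $B_2$-terms cancel and the $B_1$-terms reorganise exactly into the corrections $-B_1\G_{02}$ defining $\G_1'$ in \eqref{2.23}, so that the surviving expression is precisely $(\G_1'\wh f,\G_0'\wh g)-(\G_0'\wh f,\G_1'\wh g)$. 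For surjectivity I would, given $(x,y)\in\cH'\oplus\cH'$, use surjectivity of $\G$ for $\Pi$ to pick $\wh f\in A^*$ with $\G_{01}\wh f=x$, $\G_{11}\wh f=y$, $\G_{02}\wh f=0$, $\G_{03}\wh f=0$, $\G_{12}\wh f=B_1^*x$, $\G_{13}\wh f=0$; this $\wh f$ meets the constraints \eqref{2.22}, hence lies in $S^*$, and has $\G_0'\wh f=x$, $\G_1'\wh f=y$. Thus $(\G_0',\G_1')^\top$ maps $S^*$ onto $\cH'\oplus\cH'$, and $\Pi'$ is a boundary triplet for $S^*$.

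For (iii) I would prove the set equality $S_\t=A_{\wt\t}$ by unwinding both abstract boundary conditions. A tuple $\wh f$ lies in $S_\t$ iff $\wh f\in S^*$ and $\{\G_0'\wh f,\G_1'\wh f\}\in\t$, i.e. (using \eqref{2.22} and \eqref{2.23}) iff $\G_{03}\wh f=0$, $\G_{12}\wh f=B_1^*\G_{01}\wh f+B_2\G_{02}\wh f$, $\G_{01}\wh f\in\dom\t'$, and $\G_{11}\wh f-B_1\G_{02}\wh f-\t'\G_{01}\wh f\in\mul\t$. On the other side, unwinding $\{\G_0\wh f,\G_1\wh f\}\in\wt\t$ from \eqref{2.23.0} forces $\G_{03}\wh f=0$, $\G_{01}\wh f\in\dom\t'$, $\G_{12}\wh f=B_1^*\G_{01}\wh f+B_2\G_{02}\wh f$ and $\G_{11}\wh f-\t'\G_{01}\wh f-B_1\G_{02}\wh f\in\mul\t$, with $\G_{13}\wh f\in\cK$ unconstrained. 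These two lists coincide verbatim, so $S_\t=A_{\wt\t}$; since $\wt\t$ is a relation in $\cH$, Proposition \ref{pr2.6}(ii) gives $\wt A\in\ex$. The supplementary case $\t\in\B(\cH')$ follows by specialising $\dom\t'=\cH'$, $\mul\t=\{0\}$, $\t'=\t$, which collapses \eqref{2.23.0} to \eqref{2.23.0.1}. The step I expect to be most delicate is the Green's-identity reduction in (ii): one must simultaneously use both constraints defining $S^*$ and the symmetry $B_2=B_2^*$ so that the $\cK$- and $\cH''$-cross-terms cancel and the surviving $B_1$-terms assemble into $\G_1'=\G_{11}-B_1\G_{02}$, and placing the adjoint pairings $(B_1^*\,\cdot\,,\,\cdot\,)=(\,\cdot\,,B_1\,\cdot\,)$ in the correct slots is where a transpose or sign could slip; the arithmetic in (i) and (iii) is routine once the block conventions of \eqref{2.20} are fixed.
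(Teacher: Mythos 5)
Your proposal is correct and follows essentially the same route as the paper's proof: part (i) via Proposition \ref{pr2.6}, (\romannumeral 3) and (\romannumeral 5), together with the explicit formula for $\t_0^*$; part (ii) via the block expansion of Green's identity \eqref{2.10} reduced by the constraints \eqref{2.22} (with $B_2=B_2^*$ cancelling the $\cH''$-terms), plus surjectivity through exactly the same explicit preimage $\G_{01}\wh f=h_1'$, $\G_{11}\wh f=h_2'$, $\G_{12}\wh f=B_1^*h_1'$, remaining components zero; and part (iii) by unwinding both abstract boundary conditions and matching the resulting lists verbatim. The only difference is cosmetic: you spell out the adjoint computation yielding $\t_0^*$, which the paper records without proof as obvious.
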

\begin{proof}
(i) Clearly  the relation $\t_0$ is symmetric and by Proposition \ref{pr2.6},  (\romannumeral 5) $S:=A_{\t_0}\in \cex$ is symmetric. Moreover, by \eqref{2.20} the second equality in \eqref{2.21} holds. Since obviously
\begin{gather*}
\t_0^*=\{\{h_1\oplus h_2, h'\oplus (B_1^* h_1+B_2 h_2) \oplus  k\}:h_1\oplus h_2\in \cH'\oplus \cH'', h'\in\cH',k\in\cK \}
\end{gather*}
and by Proposition \ref{pr2.6},  (\romannumeral 3) $S^*=A_{\t_0^*}$, the equality \eqref{2.22} is valid.

(ii) Let $\wh f=\{f,f'\}, \wh g=\{g,g'\}\in S^*$. By using Green identity \eqref{2.10} (for the triplet $\Pi$) and \eqref{2.22} one obtains
\begin{gather*}
(f',g)-(f,g')=(\G_{11}\wh f, \G_{01}\wh g)-(\G_{01}\wh f, \G_{11}\wh g)+ (\G_{12}\wh f, \G_{02}\wh g)-(\G_{02}\wh f, \G_{12}\wh g)+\\
(\G_{13}\wh f, \G_{03}\wh g)-(\G_{03}\wh f, \G_{13}\wh g)=(\G_{11}\wh f, \G_{01}\wh g)-(\G_{01}\wh f, \G_{11}\wh g)+(\G_{01}\wh f,B_1\G_{02}\wh g)+\\
(B_2 \G_{02}\wh f,\G_{02}\wh g)- (B_1\G_{02}\wh f, \G_{01}\wh g)-(\G_{02}\wh f,B_2 \G_{02}\wh g)=((\G_{11} -B_1 \G_{02})\wh f,\G_{01}\wh g )-\\
(\G_{01}\wh f,(\G_{11} -B_1 \G_{02})\wh g)=(\G_1'\wh f, \G_0'\wh g)-(\G_0'\wh f, \G_1'\wh g).
\end{gather*}
This proves  Green's identity \eqref{2.10} for operators $\G_0'$ and $\G_1'$.

Next assume that $h_1', h_2'\in \cH'$. Since the mapping $\G=(\G_0,\G_1)^\top $ is surjective, there is $\wh f \in A^*$ such that
\begin{gather*}
\G_{01}\wh f=h_1', \;\;\;\G_{02}\wh f= 0,\;\;\; \G_{03}\wh f= 0,\;\;\; \G_{11}\wh f =h_2', \;\;\; \G_{12}\wh f= B_1^* h_1', \;\;\;\G_{13}\wh f= 0.
\end{gather*}
Hence $\G_{12}\wh f=B_1^* \G_{01}\wh f + B_2 \G_{02}\wh f $ and by \eqref{2.22} $\wh f\in S^*$. Moreover, in view of \eqref{2.23} one has $\G_0'\wh f=h_1', \;  \G_1'\wh f=h_2'$, which proves surjectivity  of the mapping $\G'=(\G_0',\G_1')^\top$.

(iii) It follows from \eqref{2.22} and \eqref{2.23} that $\wh f\in \wt A$ if and only if $\wh f\in A^*$ and
\begin{gather*}
\G_{01}\wh f \in \dom\t,\quad \G_{11}\wh f - B_1 \G_{02}\wh f =\t' \G_{01}\wh f +h', \quad \G_{12}\wh f=B_1^* \G_{01}\wh f + B_2 \G_{02}\wh f,\quad \G_{03}\wh f =0
\end{gather*}
with some $h'\in\mul\t$. In turn, these conditions are equivalent to inclusions $\wh f \in A^*$ and $\{\G_0\wh f, \G_1 \wh f\}\in \wt \t$. Hence $\wt A=A_{\wt \t}$ (in the triplet $\Pi$).
\end{proof}
\begin{lemma}\label{lem2.7.1}
Let $L_1$ and $L_2$ be closed subspaces in a Hilbert space $\cH$ such that $L_1\cap L_2=\{0\}$. Then $\ov{L_1\dotplus L_2}=L_1\dotplus L_2$ if and only if there exists a closed subspace $L_1'\supset L_1$ in  $\cH$ such that $\cH= L_1'\dotplus L_2$.
\begin{proof}
Let $\ov{L_1\dotplus L_2}=L_1\dotplus L_2$. Then $\cH=(L_1\dotplus L_2)\oplus \cH'$ with $\cH'=\cH\ominus (L_1\dotplus L_2)$ and hence $\cH=L_1'\oplus L_2$ with a closed subspace $L_1'=L_1\oplus\cH'$. Conversely, let $L_1'\supset L_1$  be a closed subspace in  $\cH$ such that $\cH= L_1'\dotplus L_2$ and  let $\pi_1\in \B (\cH,L_1')$ and $\pi_2\in \B (\cH,L_2)$ be the corresponding skew projections onto $L_1'$ and $L_2$ respectively. Assume that $f_n\in L_1\dotplus L_2$ and $f_n\to f$. Then $\pi_1 f_n\in L_1, \; \pi_1 f_n \to \pi_1 f$ and hence $\pi_1 f\in L_1$. Since $\pi_2 f\in L_2$, it follows that $f(=\pi_1 f+\pi_2 f)\in L_1\dotplus L_2$. Therefore $L_1\dotplus L_2$ is closed.
\end{proof}
\end{lemma}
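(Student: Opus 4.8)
The plan is to prove the two implications separately, with the common thread being the correspondence between a direct-sum decomposition $\cH=L_1'\dotplus L_2$ into closed subspaces and the pair of bounded oblique projections it induces. Throughout, the hypothesis $L_1\cap L_2=\{0\}$ guarantees that $L_1\dotplus L_2$ is a genuine algebraic direct sum, so every element has a unique decomposition into an $L_1$-part and an $L_2$-part; the whole issue is whether this algebraic sum is \emph{closed}.

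For the forward implication, suppose $M:=L_1\dotplus L_2$ is closed. Then I would take the orthogonal complement $\cH':=\cH\ominus M$ and set $L_1':=L_1\oplus\cH'$. Since $L_1\subset M$ and $\cH'\perp M$, this is an orthogonal sum of two closed subspaces, hence a closed subspace containing $L_1$. It remains to verify $\cH=L_1'\dotplus L_2$. The identity $L_1'+L_2=(L_1+L_2)+\cH'=M\oplus\cH'=\cH$ is immediate, and for triviality of the intersection I would take $x\in L_1'\cap L_2$, write $x=a+b$ with $a\in L_1$ and $b\in\cH'$, note that $b=x-a\in M$ (because $x\in L_2\subset M$ and $a\in L_1\subset M$), and conclude $b\in M\cap\cH'=\{0\}$, whence $x=a\in L_1\cap L_2=\{0\}$.

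For the reverse implication, suppose $\cH=L_1'\dotplus L_2$ with $L_1'\supset L_1$ closed. Since $L_1'$ and $L_2$ are complementary closed subspaces, the associated skew projections $\pi_1$ onto $L_1'$ and $\pi_2$ onto $L_2$ are bounded. The key observation is that for $f=a+b\in L_1\dotplus L_2$ with $a\in L_1$ and $b\in L_2$ one already has $\pi_1 f=a$ and $\pi_2 f=b$, because $a\in L_1\subset L_1'$ and $b\in L_2$ exhibit the unique $L_1'\dotplus L_2$-decomposition of $f$; in particular $\pi_1 f\in L_1$. Then, given $f_n\in L_1\dotplus L_2$ with $f_n\to f$, continuity of $\pi_1$ gives $\pi_1 f_n\to\pi_1 f$, and since each $\pi_1 f_n\in L_1$ with $L_1$ closed we obtain $\pi_1 f\in L_1$; likewise $\pi_2 f\in L_2$. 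Therefore $f=\pi_1 f+\pi_2 f\in L_1\dotplus L_2$, which proves closedness.

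The step I expect to require the most care is the boundedness of the oblique projection $\pi_1$ in the reverse direction: mere algebraic complementarity of two subspaces does not yield continuous projections, and one must invoke that both $L_1'$ and $L_2$ are closed, via the closed graph theorem (equivalently, the open mapping theorem applied to the addition map $L_1'\times L_2\to\cH$). Once continuity of $\pi_1$ and $\pi_2$ is secured, the rest is a routine limit argument using the closedness of $L_1$ and $L_2$. The forward direction is comparatively soft, its only mild subtlety being the verification $L_1'\cap L_2=\{0\}$, which rests on the orthogonality $\cH'\perp M$.
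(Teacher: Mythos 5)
Your proof is correct and follows essentially the same route as the paper's: orthogonal complementation $L_1'=L_1\oplus(\cH\ominus(L_1\dotplus L_2))$ in the forward direction, and bounded skew projections $\pi_1,\pi_2$ plus a limit argument in the reverse direction. You merely fill in two details the paper leaves implicit (the verification $L_1'\cap L_2=\{0\}$ and the closed-graph justification for boundedness of the oblique projections), which is fine but not a different argument.
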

\begin{proposition}\label{pr2.7.2}
Let $\Pi=\bt$ be a boundary triplet for $A^*$, let $A_0=\ker \G_0$  and let $\t\in \C (\cH)$ be a linear relation such that decompositions \eqref{2.9.3} hold with a (closed) linear operator $B:\dom\t\to \cH_0, \; \dom \t\subset \cH_0$ (in particular, this assumption is satisfied for symmetric $\t$). Then $\ov{A_\t\wh + A_0}=A_\t\wh + A_0$ if and only if $B\in\B (\dom \t,\cH_0)$ (that is, if and only if $\ov{\dom\t}=\dom\t$).
\end{proposition}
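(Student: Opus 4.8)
The plan is to move the problem from $A^*$, which is a Hilbert space in the graph norm (being a closed subspace of $\gH\oplus\gH$), into the model space $\cH\oplus\cH$ via the boundary map $\G=(\G_0,\G_1)^\top$, and then to invoke Lemma \ref{lem2.7.1}. By Proposition \ref{pr2.6}\,(\romannumeral 1) the map $\G$ is bounded, surjective and $\ker\G=A$, so it descends to a topological isomorphism of $A^*/A$ onto $\cH\oplus\cH$ by the open mapping theorem. Since $A\subset A_\t\cap A_0$, the relation $A_\t\wh + A_0$ contains $\ker\G$; hence it is closed in $A^*$ if and only if its image $\G(A_\t\wh + A_0)$ is closed in $\cH\oplus\cH$. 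As $\G$ is linear and surjective one has $\G(A_\t)=\t$ and $\G(A_0)=\{0\}\oplus\cH$, and therefore
\begin{gather*}
\G(A_\t\wh + A_0)=\t+(\{0\}\oplus\cH).
\end{gather*}

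Using \eqref{2.9.3}, write $\t={\rm gr}\,B\oplus\wh\mul\t$ with the closed operator $B:\dom\t\to\cH_0$ and $\wh\mul\t=\{0\}\oplus\cK\subset\{0\}\oplus\cH$. Then this last summand is absorbed into $\{0\}\oplus\cH$ and
\begin{gather*}
\t+(\{0\}\oplus\cH)={\rm gr}\,B\dotplus(\{0\}\oplus\cH),
\end{gather*}
the sum being direct because ${\rm gr}\,B\cap(\{0\}\oplus\cH)=\{0\}$ (a graph meets $\{0\}\oplus\cH$ only in the origin). Both summands are closed, since $B$ is closed. I would now apply Lemma \ref{lem2.7.1} with $L_1={\rm gr}\,B$ and $L_2=\{0\}\oplus\cH$: the sum is closed precisely when there is a closed subspace $L_1'\supset{\rm gr}\,B$ with $\cH\oplus\cH=L_1'\dotplus(\{0\}\oplus\cH)$.

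The last step is to identify these complements. The projection onto the first coordinate restricts to a continuous bijection of any complement of $\{0\}\oplus\cH$ onto $\cH$, so by the bounded inverse theorem the closed subspaces $L_1'$ with $\cH\oplus\cH=L_1'\dotplus(\{0\}\oplus\cH)$ are exactly the graphs ${\rm gr}\,C$ of operators $C\in\B(\cH)$. The inclusion ${\rm gr}\,B\subset{\rm gr}\,C$ holds iff $C\up\dom\t=B$, i.e. iff $C$ is a bounded extension of $B$ to all of $\cH$; such an extension exists if and only if $B$ is bounded on $\dom\t$ (if $B\in\B(\dom\t,\cH_0)$ one may take $C=\ov B\,P_{\ov{\dom\t}}$, while the converse is trivial). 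Thus $A_\t\wh + A_0$ is closed iff $B\in\B(\dom\t,\cH_0)$. Finally, since $B$ is closed, $B\in\B(\dom\t,\cH_0)$ is equivalent to $\ov{\dom\t}=\dom\t$: a closed everywhere-defined operator on the Banach space $\dom\t$ is bounded, and conversely a bounded closed operator has closed domain. This yields the stated equivalence.

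I expect the principal obstacle to be the first step, namely justifying that closedness is preserved when passing between $A_\t\wh + A_0\subset A^*$ and its image $\t+(\{0\}\oplus\cH)\subset\cH\oplus\cH$. This rests on $\G$ being an open map whose kernel is contained in the relation; once the problem is recast as the closedness of ${\rm gr}\,B\dotplus(\{0\}\oplus\cH)$, the remaining identifications are routine consequences of Lemma \ref{lem2.7.1} and the bounded inverse theorem.
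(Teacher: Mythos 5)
Your proof is correct and follows essentially the same route as the paper: both reduce, via Proposition \ref{pr2.6}\,(i), the closedness of $A_\t\wh + A_0$ to the closedness of $\t\wh + (\{0\}\oplus\cH)$ in $\cH\oplus\cH$, and then apply Lemma \ref{lem2.7.1} together with the bounded-inverse-theorem identification of closed complements of the second summand with graphs of operators in $\B(\cH)$, concluding that boundedness of the closed operator $B$ is equivalent to $\ov{\dom\t}=\dom\t$. The only cosmetic difference is that the paper first splits off $\wh\cK=\{0\}\oplus\mul\t$ orthogonally and applies the lemma to $B\dotplus\wh\cH_0$ inside $\cH_0^2$, whereas you absorb $\wh\mul\t$ into $\{0\}\oplus\cH$ and work in $\cH\oplus\cH$ directly.
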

\begin{proof}
Let $\wh\cH=\{0\}\oplus \cH$ and $\wh\cH_0=\{0\}\oplus \cH_0$. Since $A_0=A_{\wh\cH}$, it follows from Proposition \ref{pr2.6}, (i) that
\begin{gather}\label{2.24}
\ov{A_\t\wh + A_0}=A_\t\wh + A_0 \iff \ov{\t\wh + \wh\cH}=\t\wh + \wh\cH.
\end{gather}
Moreover, $B\cap \wh\cH_0=\{0\}$ and $\wh\cH=\wh\cH_0 \oplus \wh\cK$, which in view of \eqref{2.9.3} implies that $\t\wh + \wh\cH=(B\dotplus \wh\cH_0)\oplus \wh\cK$. Therefore
\begin{gather}\label{2.24.1}
\ov{\t\wh + \wh\cH}=\t\wh + \wh\cH\iff \ov{B\dotplus \wh\cH_0}=B\dotplus \wh\cH_0.
\end{gather}
Next, by Lemma \ref{lem2.7.1} $\ov{B\dotplus \wh\cH_0}=B\dotplus \wh\cH_0$  if and only if there exists $B'\in\C (\cH_0)$ such that $B\subset B'$ and $B'\oplus \wh\cH_0=\cH_0^2$. Since the last equality is equivalent to the inclusion $B'\in\B(\cH_0)$, the following equivalence holds:
\begin{gather}\label{2.24.2}
\ov{B\dotplus \wh\cH_0}=B\dotplus \wh\cH_0\iff B\in \B(\dom \t,\cH_0).
\end{gather}
Now combining \eqref{2.24}, \eqref{2.24.1} and \eqref{2.24.2}, one obtains the required statement.
\end{proof}
\begin{corollary}\label{cor2.7.3}
Let $\Pi=\bt$ be a boundary triplet for $A^*$, let $A_0=\ker \G_0$ and let $\t\in \C (\cH)$. Then $A_\t \cap A_0=A$ and  $\ov{A_\t\wh + A_0}=A_\t\wh + A_0$ if and only if $\t\in\B (\dom \t,\cH)$.
\end{corollary}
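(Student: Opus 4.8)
The plan is to decouple the two requirements: I would show that $A_\t\cap A_0=A$ is equivalent to $\mul\t=\{0\}$, while $\ov{A_\t\wh + A_0}=A_\t\wh + A_0$ is, by Proposition~\ref{pr2.7.2}, equivalent to $\dom\t$ being closed. Taken together these two conditions say precisely that $\t$ is a bounded operator defined on a closed subspace, i.e.\ $\t\in\B(\dom\t,\cH)$.

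First I would compute the intersection. Using the parametrization \eqref{2.10.1} together with $A_0=\ker\G_0$ and $A=\ker\G$ (Proposition~\ref{pr2.6}, (\romannumeral 1)), the condition $\G_0\wh f=0$ turns $\{\G_0\wh f,\G_1\wh f\}\in\t$ into $\G_1\wh f\in\mul\t$, so
\begin{gather*}
A_\t\cap A_0=\{\wh f\in A^*:\G_0\wh f=0,\ \G_1\wh f\in\mul\t\}.
\end{gather*}
Since $\{0,0\}\in\t$ one always has $A\subset A_\t\cap A_0$, so equality fails exactly when some $\wh f$ with $\G_0\wh f=0$ has $0\neq\G_1\wh f\in\mul\t$. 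As $\G=(\G_0,\G_1)^\top$ is surjective onto $\cH\oplus\cH$, every $k\in\mul\t$ is realized as $\G_1\wh f$ for some $\wh f$ with $\G_0\wh f=0$; hence $A_\t\cap A_0=A$ if and only if $\mul\t=\{0\}$, that is, if and only if $\t$ is (the graph of) an operator.

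Next I would invoke Proposition~\ref{pr2.7.2}. Since $\t\in\C(\cH)$ its multivalued part $\mul\t$ is closed, so the decomposition \eqref{2.9.3} holds with a closed operator part $B$ and the proposition applies, giving $\ov{A_\t\wh + A_0}=A_\t\wh + A_0\iff\ov{\dom\t}=\dom\t$.

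Finally I would combine the two equivalences. If both conditions hold then $\mul\t=\{0\}$, so in \eqref{2.9.3} one has $\cH_0=\cH$ and $B=\t$, and the sum condition forces $\dom\t$ to be closed; a closed operator on a closed (hence complete) domain is bounded by the closed graph theorem, so $\t\in\B(\dom\t,\cH)$. Conversely, $\t\in\B(\dom\t,\cH)$ is a bounded operator, whence $\mul\t=\{0\}$ and $\dom\t$ is closed, and the two equivalences return both conditions. I do not expect a genuine obstacle here, since the two conditions control $\mul\t$ and $\dom\t$ independently; the only point requiring care is the bookkeeping that when $\mul\t=\{0\}$ the operator part $B$ of Proposition~\ref{pr2.7.2} coincides with $\t$ and $\cH_0=\cH$, so that its criterion $B\in\B(\dom\t,\cH_0)$ reads exactly $\t\in\B(\dom\t,\cH)$.
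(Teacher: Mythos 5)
Your proof is correct and is essentially the paper's own argument: there too, the condition $A_\t\cap A_0=A$ is reduced via Proposition \ref{pr2.6}, (i) to $\t\cap(\{0\}\oplus\cH)=\{0\}$, i.e.\ to $\mul\t=\{0\}$, after which Proposition \ref{pr2.7.2} is applied with $\cK=\{0\}$ and $B=\t$, the closed graph theorem giving the equivalence of a closed domain with boundedness. One small inaccuracy: your middle paragraph asserts that closedness of $\mul\t$ alone yields decomposition \eqref{2.9.3}, but that decomposition also requires $\dom\t\subset\cH\ominus\mul\t$, which can fail for a closed nonsymmetric relation (e.g.\ $\t={\rm span}\{\{e_1,0\},\{0,e_1\}\}$ in $\bC^2$); this is harmless for your argument, since your final synthesis invokes Proposition \ref{pr2.7.2} only after $\mul\t=\{0\}$ has been secured, where the hypothesis holds trivially with $\cH_0=\cH$ and $B=\t$.
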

\begin{proof}
Since $A_0=A_{\{0\}\oplus\cH}$, it follows from Proposition \ref{pr2.6}, (i) that $A_\t \cap A_0=A$ if and only if $\t\cap (\{0\}\oplus\cH)=\{0\}$, that is if and only if $\t$ is an operator. Thus $A_\t \cap A_0=A$ if and only if decompositions \eqref{2.9} hold with $\cK=\{0\}$. This and Proposition \ref{pr2.7.2} yield the result.
\end{proof}
In the rest of this subsection we recall some definitions and results from \cite{DM91,Mal92,DM95}.
\begin{proposition}\label{pr2.8}
Let $\Pi=\bt$ be a boundary triplet for $A^*$ and let $A_0=A_0^*\in \cex$ be given by \eqref{2.11}. Moreover, let $\pi_1$ be the
orthoprojection  in $\gH\oplus\gH$ onto $\gH\oplus \{0\}$. Then the operator $\G_0\up \wh \gN_\l (A), \;\l\in\CR,$  isomorphically maps $\wh\gN_\l (A)$ onto $\cH$ and hence  the equalities
\begin{gather*}
\g (\l)=\pi_1(\G_0\up\wh \gN_\l (A))^{-1}, \qquad \G_1\up\wh \gN_\l(A)=M(\l)\G_0\up\wh \gN_\l(A), \quad \l\in\CR
\end{gather*}
correctly define the operator functions $\g(\cdot):\CR\to \B(\cH,\gH)$ and $M(\cdot):\CR\to \B(\cH)$. Moreover, $\g(\cd)$ and $M(\cd)$ satisfy the identities
\begin{gather}
\g(\l)=\g(z)+(\l -z)(A_0-\l)^{-1}\g(z)\label{2.25}\\
 M(z)-M^*(\l)=(z-\ov\l)\g^*(\l)\g(z),  \quad z,\l \in\CR \label{2.25.1}
\end{gather}
which imply that $\g(\cd)$ and $M(\cd)$  are holomorphic in $\CR$ and $M(\cd)\in R[\cH]$.
\end{proposition}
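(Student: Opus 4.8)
The plan is to establish the four assertions in order: the isomorphism property of $\G_0\up\wh\gN_\l(A)$, the resolvent-type identity \eqref{2.25}, the identity \eqref{2.25.1}, and finally the holomorphy and the Nevanlinna property read off from these. First I would record the direct decomposition $A^*=A_0\dotplus\wh\gN_\l(A)$, valid for every $\l\in\CR$ because $A_0=A_0^*$ (Proposition \ref{pr2.6}, (\romannumeral 6)) forces $\l\in\rho(A_0)$ and $(A_0-\l)^{-1}\in\B(\gH)$; explicitly, the $\wh\gN_\l(A)$-component of $\wh f=\{f,f'\}\in A^*$ is $\{g,\l g\}$ with $g=f-(A_0-\l)^{-1}(f'-\l f)$, and the sum is direct since $\ker(A_0-\l)=\{0\}$. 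Injectivity of $\G_0\up\wh\gN_\l(A)$ then follows from $\ker\G_0=A_0$ together with $A_0\cap\wh\gN_\l(A)=\{0\}$, and surjectivity from that of $\G_0$ on $A^*$ (given $h\in\cH$, take $\wh f\in A^*$ with $\G_0\wh f=h$ and pass to its $\wh\gN_\l(A)$-component, on which $\G_0$ agrees with $\G_0\wh f$ because the $A_0$-part is annihilated). As $\wh\gN_\l(A)$ is closed in $A^*$ and $\G_0$ is bounded (Proposition \ref{pr2.6}, (\romannumeral 1)), the open mapping theorem gives a bounded inverse, so $\g(\l)\in\B(\cH,\gH)$ and $M(\l)\in\B(\cH)$ are well defined.

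For \eqref{2.25} I would argue pointwise. Fixing $h\in\cH$ and $z,\l\in\CR$, set $f_z=\g(z)h$ (so $\{f_z,zf_z\}\in\wh\gN_z(A)$ with $\G_0$-image $h$) and $u=(A_0-\l)^{-1}f_z$ (so $\{u,f_z+\l u\}\in A_0$). The heart of the computation is the relation identity
\begin{gather*}
\{f_z,zf_z\}+(\l-z)\{u,f_z+\l u\}=\{f_z+(\l-z)u,\ \l(f_z+(\l-z)u)\},
\end{gather*}
whose left side lies in $A^*$, so that $g:=f_z+(\l-z)u$ belongs to $\gN_\l(A)$; applying the linear map $\G_0$ and using $\{u,f_z+\l u\}\in A_0=\ker\G_0$ gives $\G_0\{g,\l g\}=h$, whence $g=\g(\l)h$, which is precisely \eqref{2.25}.

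For \eqref{2.25.1} I would take $h_1,h_2\in\cH$, form $\wh f_\l=\{\g(\l)h_1,\l\g(\l)h_1\}$ and $\wh f_z=\{\g(z)h_2,z\g(z)h_2\}$, and insert them into Green's identity \eqref{2.10}. The left side collapses to $(z-\ov\l)(\g(z)h_2,\g(\l)h_1)=(z-\ov\l)(\g^*(\l)\g(z)h_2,h_1)$, while on the right the defining relation $\G_1\up\wh\gN_\mu(A)=M(\mu)\G_0\up\wh\gN_\mu(A)$ turns $(\G_1\wh f_z,\G_0\wh f_\l)-(\G_0\wh f_z,\G_1\wh f_\l)$ into $((M(z)-M^*(\l))h_2,h_1)$; since $h_1,h_2$ are arbitrary, this yields \eqref{2.25.1}.

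Finally I would extract the remaining claims. Holomorphy of $\g$ is immediate from \eqref{2.25} and holomorphy of $\l\mapsto(A_0-\l)^{-1}$ on $\rho(A_0)\supset\CR$, and then holomorphy of $M$ follows from \eqref{2.25.1} by fixing $\l$ and letting $z$ vary. Putting $\l=\ov z$ in \eqref{2.25.1} gives $M^*(\l)=M(\ov\l)$, while $z=\l$ gives $M(\l)-M^*(\l)=2i\,\im\l\cd\g^*(\l)\g(\l)$, i.e. $\im M(\l)=\im\l\cd\g^*(\l)\g(\l)$, so that $\im\l\cd\im M(\l)=(\im\l)^2\g^*(\l)\g(\l)\geq0$; together these give $M\in R[\cH]$. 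I expect the only real obstacle to be the first step: one must justify the direct decomposition $A^*=A_0\dotplus\wh\gN_\l(A)$ and the closedness of $\wh\gN_\l(A)$ in $A^*$ so that the open mapping theorem applies, after which everything reduces to linear-relation bookkeeping governed by Green's identity.
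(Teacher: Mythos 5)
Your proof is correct and is exactly the standard argument: the paper itself states Proposition \ref{pr2.8} without proof, recalling it from \cite{DM91,Mal92,DM95}, and the proofs there proceed precisely as you do --- via the direct decomposition $A^*=A_0\dotplus\wh\gN_\l(A)$ for $\l\in\rho(A_0)\supset\CR$, the open mapping theorem for boundedness of $(\G_0\up\wh\gN_\l(A))^{-1}$, the relation identity giving \eqref{2.25}, and Green's identity \eqref{2.10} giving \eqref{2.25.1}, from which holomorphy, $M^*(\l)=M(\ov\l)$ and $\im M(\l)=\im\l\cdot\g^*(\l)\g(\l)$ follow as you state. No gaps; all steps (directness of the sum via $\ker(A_0-\l)=\{0\}$, closedness of $\wh\gN_\l(A)$ in $A^*$, and the pointwise computations) check out.
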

\begin{definition}\label{def2.9}
The operator-functions $\g(\cd)$ and $M(\cd)$ defined in
Proposition \ref{pr2.8}  are called the
$\g$-field and the Weyl function of the triplet $\Pi$ respectively.
\end{definition}
\begin{remark}\label{rem2.10}
It follows from \eqref{2.25} and \eqref{2.25.1} that $\g(\cd)$ and $M(\cd)$ are the $\g$-field  and the $Q$-function of the pair $(A,A_0)$ respectively in the sense of  \cite{KreLan73,LanTex77}.
\end{remark}
\begin{definition}\label{def2.11}
The (not necessarily closed) linear relation $\cF$ in $\cH$ defined by
\begin{gather*}
\cF:=\G \wh\mul A^*= \{\{\G_0\{0,n\},\G_1\{0,n\}\}:n\in \gN (A)\}
\end{gather*}
is called a forbidden relation of the boundary triplet $\Pi=\bt$ for $A^*$.
\end{definition}
In the following theorem the forbidden relation $\cF$ is characterized in terms of the asymptotic behavior of the Weyl function.
\begin{theorem}\label{th2.12}
Let $\Pi=\bt$ be a boundary triplet for $A^*$, let $M(\cd)$ be the Weyl function of $\Pi$ and let $B_M$ and $\cN_M$ be the operators defined by  \eqref{2.3} and \eqref{2.3.1}, \eqref{2.3.2} respectively. Moreover, let  $\cF$ be the forbidden relation of $\Pi$. Then
\begin{gather}\label{2.27}
\ran B_M\subset \mul \cF, \quad \ov{\ran B_M} = \ov{\mul \cF},
\end{gather}
and $\cF$ admits the representation
\begin{gather}\label{2.28}
\cF=\cN_M \dotplus \wh\mul \cF= \{\{h,\cN_M h+h'\}: h\in\dom \cN_M,\, h'\in \mul \cF\}.
\end{gather}
\end{theorem}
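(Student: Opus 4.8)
The plan is to connect $\cF$ to the Weyl function through two facts. The first is the identity $\im M(\l)=\im\l\cdot\gamma(\l)^*\gamma(\l)$, obtained by putting $z=\l$ in \eqref{2.25.1}; at $\l=iy$ it reads $\|iy\,\gamma(iy)h\|^2=y\,\im(M(iy)h,h)$, so that $h\in\dom\cN_M$ is \emph{equivalent} to boundedness of $\{iy\,\gamma(iy)h\}_{y}$. The second is the decomposition $A^*=A_0\dotplus\wh\gN_\l(A)$: for $n\in\mul A^*=\gN(A)$ and $h:=\G_0\{0,n\}$, writing $\{0,n\}=\wh u_\l\dotplus\wh v_\l$ with $\wh u_\l\in A_0$, $\wh v_\l=\{\gamma(\l)h,\l\gamma(\l)h\}$ and comparing first components (recall $A_0=\ker\G_0$) forces $\gamma(\l)h=-(A_0-\l)^{-1}n$. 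Since $\gamma(\l)$ is injective and $\ker(A_0-\l)^{-1}=\mul A_0$, this yields the characterization $\G_0\{0,n\}=0\iff n\in\mul A_0$, hence $\mul\cF=\{\G_1\{0,m\}:m\in\mul A_0\}$.

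To prove \eqref{2.27} I would use the strong limits $(A_0-iy)^{-1}\to0$ and $iy(A_0-iy)^{-1}\to-(I-P_{\mul A_0})$, which give $\gamma(iy)h\to P_{\mul A_0}\gamma(z)h=:g_0\in\mul A_0$ for every $h$ (fix any $z\in\CR$). Then $\wh w_y:=\{\tfrac1{iy}\gamma(iy)h,\gamma(iy)h\}\in A^*$ converges in $\gH\oplus\gH$ to $\{0,g_0\}$; since $A^*$ is closed and $\G_0,\G_1$ are bounded on $A^*$, passing to the limit in $\G_0\wh w_y=\tfrac1{iy}h\to0$ and $\G_1\wh w_y=\tfrac1{iy}M(iy)h\to B_M h$ shows $\{0,B_M h\}\in\cF$, i.e.\ $\ran B_M\subset\mul\cF$. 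As $h$ varies this realizes $\ran B_M=\{\G_1\{0,m\}:m\in P_{\mul A_0}\gN_z(A)\}$, while $\mul\cF=\{\G_1\{0,m\}:m\in\mul A_0\}$. A one-line Green computation with $\{0,m\}$, $m\in\mul A_0$, and $\{v,zv\}$, $v\in\gN_z(A)$, gives $(m,v)=(\G_1\{0,m\},\G_0\{v,zv\})$, so any $m\in\mul A_0$ orthogonal to $P_{\mul A_0}\gN_z(A)$ (equivalently to $\gN_z(A)$) satisfies $\G_1\{0,m\}=0$; hence the bounded map $m\mapsto\G_1\{0,m\}$ has equal closed ranges over $P_{\mul A_0}\gN_z(A)$ and over $\mul A_0$, giving $\ov{\ran B_M}=\ov{\mul\cF}$.

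The inclusion $\cF\subset\cN_M\dotplus\wh\mul\cF$ in \eqref{2.28} follows from the first paragraph: for $n\in\mul A^*$ and $h=\G_0\{0,n\}$ we have $iy\,\gamma(iy)h=-iy(A_0-iy)^{-1}n$, which is bounded by $\|n\|$ and tends to $(I-P_{\mul A_0})n$. Thus $h\in\dom\cN_M$, and $\wh v_{iy}=\{\gamma(iy)h,iy\gamma(iy)h\}\to\{0,(I-P_{\mul A_0})n\}$ gives $\cN_M h=\G_1\{0,(I-P_{\mul A_0})n\}$, whence $\G_1\{0,n\}=\cN_M h+\G_1\{0,P_{\mul A_0}n\}$ with $\G_1\{0,P_{\mul A_0}n\}\in\mul\cF$.

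The hard part is the reverse inclusion, i.e.\ $\dom\cN_M\subset\dom\cF$ together with $\{h,\cN_M h\}\in\cF$ for $h\in\dom\cN_M$, because here an \emph{analytic} boundedness hypothesis must be turned into the \emph{existence} of an $n\in\mul A^*$. I would argue by weak compactness. By the first identity, $h\in\dom\cN_M$ means precisely that $\{iy\,\gamma(iy)h\}$ is norm-bounded, so along a suitable $y_k\to\infty$ one has $iy_k\gamma(iy_k)h\rightharpoonup n$ weakly, while $\gamma(iy_k)h\to0$ in norm. Then $\{\gamma(iy_k)h,iy_k\gamma(iy_k)h\}\rightharpoonup\{0,n\}$ weakly; as $A^*$ is a closed, hence weakly closed, subspace, $\{0,n\}\in A^*$ and $n\in\mul A^*$, and since $\G_0,\G_1$ are weakly continuous on $A^*$ one gets $\G_0\{0,n\}=h$ and $\G_1\{0,n\}=\lim_y M(iy)h=\cN_M h$, i.e.\ $\{h,\cN_M h\}\in\cF$. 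The only delicate point is that weak compactness produces $n$ only along a subsequence; this is harmless because \eqref{2.28} requires just one such $n$, and the strong limit $\cN_M h$ of $M(iy)h$ fixes the second boundary value independently of the chosen subsequence.
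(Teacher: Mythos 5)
The paper itself contains no proof of Theorem \ref{th2.12}: it is recalled from \cite{DM91,Mal92,DM95} (see the sentence before Proposition \ref{pr2.8}), so there is no internal argument to compare yours against; judged on its own merits, your blind proof is correct and essentially reconstructs the standard route of those references. All the key steps check out. The identity $\im M(\l)=\im\l\cdot\g^*(\l)\g(\l)$ from \eqref{2.25.1} correctly converts membership in $\dom\cN_M$ into norm-boundedness of $iy\,\g(iy)h$; the decomposition $A^*=A_0\dotplus\wh\gN_\l(A)$ with $A_0=\ker\G_0$ does force $\g(\l)\G_0\{0,n\}=-(A_0-\l)^{-1}n$ and hence $\mul\cF=\{\G_1\{0,m\}:m\in\mul A_0\}$; the resolvent asymptotics $(A_0-iy)^{-1}\to 0$ and $iy(A_0-iy)^{-1}\to -(I-P_{\mul A_0})$ (justified in one line via the operator part $A_0=A_{0,op}\oplus\wh\mul A_0$ and the spectral theorem) combined with closedness of $A^*$ and boundedness of $\G$ on $A^*$ (Proposition \ref{pr2.6}, (i)) yield both $\ran \cB_M\subset\mul\cF$ and the inclusion $\cF\subset\cN_M\dotplus\wh\mul\cF$; and the weak-compactness argument for the reverse inclusion is legitimate, since a closed subspace is weakly closed and a bounded operator is weakly continuous, while the norm limit $\cN_M h$ of $M(iy)h$ pins down the second boundary value irrespective of the subsequence.

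Two cosmetic points. In the second paragraph, "equal closed ranges" should read "equal closures of the ranges": neither $\ran\cB_M=\G_1(\{0\}\oplus P_{\mul A_0}\gN_z(A))$ nor $\mul\cF=\G_1(\{0\}\oplus\mul A_0)$ need be closed, and your Green-identity computation (which is correct, using surjectivity of $\G_0\up\wh\gN_z(A)$ to conclude $\G_1\{0,m\}=0$ for $m\in\mul A_0$ orthogonal to $P_{\mul A_0}\gN_z(A)$) proves exactly $\ov{\ran\cB_M}=\ov{\mul\cF}$, which is what \eqref{2.27} asserts. Also note that in the third paragraph you do not need any monotonicity of $y\im(M(iy)h,h)$ to place $h=\G_0\{0,n\}$ in $\dom\cN_M$: the norm convergence $iy\,\g(iy)h\to(I-P_{\mul A_0})n$ that you establish already gives existence of the finite limit required in \eqref{2.3.1}, as your text implicitly uses.
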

\subsection{Exit space extensions and formula for generalized resolvents}
As is known a linear relation $\wt A=\wt A^*$ in a Hilbert space $\wt\gH\supset \gH$ is called an exit space extension of $A$ if $A\subset \wt A$ and the  minimality condition $\ov{{\rm span}} \{\gH,(\wt A-\l)^{-1}\gH: \l\in\CR\}=\wt\gH$ is satisfied. For an exit space extension $\wt A\in\C (\wt\gH)$ of $A$ the compressed resolvent
\begin{gather}\label{2.30}
R(\l)=P_\gH (\wt A -\l)^{-1}\up\gH,\quad\l\in\CR
\end{gather}
is called a generalized resolvent of $A$ (here $P_\gH$ is the orthoprojection in $\wt\gH$ onto $\gH$). If two exit space extensions $\wt A_1\in\C (\wt\gH_1)$ and $\wt A_2\in\C (\wt\gH_2)$ of $A$ generates the same generalized resolvent $R(\l)$, then $\wt A_1$ and $\wt A_2$ are equivalent. The latter means that there exists a unitary operator $V\in\B (\wt\gH_1\ominus\gH,\wt\gH_2\ominus\gH  )$ such that $\wt A_2=\wt U \wt A_1$ with the unitary operator $\wt U=(I_\gH\oplus V)\oplus (I_\gH\oplus V)\in B(\wt\gH_1^2,\wt \gH_2^2)$. Hence each exit space extension $\wt A$ of $A$ is defined by the generalized resolvent \eqref{2.30} uniquely up to the equivalence.
\begin{theorem}\label{th2.14}$\,$\cite{DM91,Mal92}
Assume that $\Pi=\bt$ is a boundary triplet for  $A^*$, $A_0=\ker \G_0$ and $\g(\cd)$ and $M(\cd)$ are the $\g$-field and the Weyl function of $\Pi$ respectively. Then: {\rm (i)} the equality (Krein formula for generalized resolvents)
\begin{gather}\label{2.31}
P_\gH (\wt A_\tau -\l)^{-1}\up\gH = (A_0-\l)^{-1}- \g(\l)(\tau (\l)+M(\l))^{-1} \g^*(\ov\l), \quad \l\in\CR
\end{gather}
establishes a bijective correspondence $\wt A=\wt A_\tau$ between all relation valued functions $\tau=\tau(\l)\in \RH$ and all exit space self-adjoint extensions $\wt A$ of $A$. Moreover, for each $\tau\in\RH$ the following equality holds:
\begin{gather}\label{2.32}
P_\gH (\wt A_\tau -\l)^{-1}\up\gH =(A_{-\tau(\l)}-\l)^{-1}, \quad \l\in\CR;
\end{gather}
{\rm (ii)} an extension $\wt A_\tau$ is canonical (that is, $\wt A_\tau\in \C (\gH)$) if and only if $\tau(\l)\equiv \t(=\t^*), \; \l\in\CR$. In this case $\wt A_\tau=A_{-\t}$ (in the sense of Proposition \ref{pr2.6}, {\rm (ii)}).
\end{theorem}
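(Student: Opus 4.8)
The plan is to reduce everything to the resolvent formula for the proper (canonical) extensions of Proposition~\ref{pr2.6}, evaluated pointwise at $\t=-\tau(\l)$, and then to identify the compressed resolvent of an exit space extension with such a pointwise family through a dilation argument. The logical core is thus the identity \eqref{2.32}; once it is available, \eqref{2.31} and part~(ii) follow with little extra effort.

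The first step is the resolvent formula for a single proper extension $A_\t$. Fixing $\l\in\CR$ and a relation $\t$ with $(\t-M(\l))^{-1}\in\B(\cH)$, I would solve $(A_\t-\l)u=g$ using the direct decomposition $A^*=A_0\dotplus\wh\gN_\l(A)$, writing $\hat u=\hat u_g+\{\g(\l)h,\l\g(\l)h\}$ with $\hat u_g=\{(A_0-\l)^{-1}g,\,g+\l(A_0-\l)^{-1}g\}\in A_0$ and $h\in\cH$ unknown. Since $\G_0\hat u_g=0$ and $\G_0\{\g(\l)h,\l\g(\l)h\}=h$, the definitions of $\g(\cd)$ and $M(\cd)$ give $\G_0\hat u=h$ and $\G_1\hat u=M(\l)h+\G_1\hat u_g$, while a one-line application of Green's identity~\eqref{2.10} against a test element of $\wh\gN_{\ov\l}(A)$ identifies $\G_1\hat u_g=\g^*(\ov\l)g$. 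The boundary condition $\{\G_0\hat u,\G_1\hat u\}\in\t$ then forces $h=(\t-M(\l))^{-1}\g^*(\ov\l)g$, which yields
\begin{gather}\label{eq:plan-krein}
(A_\t-\l)^{-1}=(A_0-\l)^{-1}+\g(\l)(\t-M(\l))^{-1}\g^*(\ov\l).
\end{gather}

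Before substituting $\t=-\tau(\l)$ I would record that the sign conditions defining $\RH$ together with $M(\cd)\in R[\cH]$ ensure $(\tau(\l)+M(\l))^{-1}\in\B(\cH)$ for $\l\in\CR$. With $\t=-\tau(\l)$, formula \eqref{eq:plan-krein} becomes exactly the right-hand side of \eqref{2.31}, so \eqref{2.31} is equivalent to \eqref{2.32}. For part~(ii): if $\tau(\l)\equiv\t=\t^*$, then \eqref{eq:plan-krein} exhibits the right-hand side of \eqref{2.31} as the ordinary resolvent $(A_{-\t}-\l)^{-1}$ in $\gH$, so by minimality the associated exit space extension collapses to $\wt\gH=\gH$ and $\wt A_\tau=A_{-\t}$ is canonical; conversely a canonical $\wt A_\tau$ has a genuine resolvent in $\gH$, and injectivity of the correspondence (below) forces $\tau$ to be a $\l$-independent self-adjoint relation.

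The hard part is \eqref{2.32} together with the bijection, i.e. the Naimark--Straus correspondence recast in boundary-triplet terms. For the constructive half I would take $\tau\in\RH$, define $R(\l)$ by the right-hand side of \eqref{2.31}, and verify---using the $\g$-field identity \eqref{2.25} and the Weyl identity \eqref{2.25.1}---that $R(\cd)$ obeys the resolvent identity and the Nevanlinna-type relations $\im\l\cd\im R(\l)\geq0$ and $R^*(\l)=R(\ov\l)$ with the correct behaviour as $\l\to\infty$. Naimark's dilation theorem then realizes $R(\cd)$ as the compressed resolvent \eqref{2.30} of a self-adjoint $\wt A$ in some $\wt\gH\supset\gH$, unique up to equivalence once minimality is imposed; this defines $\wt A_\tau$, and \eqref{eq:plan-krein} delivers \eqref{2.32}. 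Injectivity follows by inverting \eqref{2.31}, since $\g(\l)$ is injective and $\g^*(\ov\l)$ has dense range, so $R(\l)$ determines $(\tau(\l)+M(\l))^{-1}$ and hence $\tau(\l)$. Conversely, for a given minimal $\wt A=\wt A^*\supset A$ one reads off a candidate $\tau$ from $R(\l)-(A_0-\l)^{-1}=-\g(\l)(\tau(\l)+M(\l))^{-1}\g^*(\ov\l)$. I expect the genuinely delicate point to be showing that this extracted $\tau$ again lies in $\RH$ (and, on the constructive side, that the explicit $R(\l)$ is a true compressed resolvent and not merely resolvent-like); this is exactly where Naimark's dilation theorem---or an equivalent model construction---is indispensable, the identities \eqref{2.25} and \eqref{2.25.1} serving only to transport the Nevanlinna property between $R(\l)$ and $\tau(\l)$.
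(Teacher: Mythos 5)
The paper itself contains no proof of Theorem \ref{th2.14}: it is imported verbatim from \cite{DM91,Mal92}, so your attempt can only be measured against the standard arguments of those works. Your pointwise core is sound and is indeed the classical one: decomposing $A^*=A_0\dotplus\wh\gN_\l(A)$, identifying $\G_1$ of the $A_0$-component via Green's identity as $\g^*(\ov\l)g$, and imposing the boundary condition gives $(A_\t-\l)^{-1}=(A_0-\l)^{-1}+\g(\l)(\t-M(\l))^{-1}\g^*(\ov\l)$, and with $\t=-\tau(\l)$ this makes \eqref{2.31} and \eqref{2.32} equivalent; part (ii) then follows from minimality and injectivity as you say. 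One small repair: boundedness of $(\tau(\l)+M(\l))^{-1}$ does not follow from $M\in R[\cH]$ alone, but from the uniform positivity $\im M(\l)=\im\l\cdot\g^*(\l)\g(\l)$ (by \eqref{2.25.1} and the fact that $\g(\l)$ is an isomorphism onto $\gN_\l(A)$), i.e.\ from $M\in R_u[\cH]$ in the paper's notation.

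The genuine gap is in the constructive half of the bijection. First, your verification list includes the first resolvent identity, which $R(\l)$ defined by the right-hand side of \eqref{2.31} does \emph{not} satisfy when $\tau$ is genuinely $\l$-dependent: if it did, $R$ would be the resolvent of a single self-adjoint relation in $\gH$ and $\wt A_\tau$ would be canonical, contradicting part (ii); so that step, as written, fails. Second, the conditions you list ($\im\l\cdot\im R(\l)\geq 0$, $R^*(\l)=R(\ov\l)$, behaviour at infinity) are necessary but not sufficient for a Naimark-type dilation: what must be verified is nonnegativity of the stronger kernel $\tfrac{R(\l)-R^*(\mu)}{\l-\ov\mu}-R^*(\mu)R(\l)$ (equivalently $\im R(\l)/\im\l\geq R^*(\l)R(\l)$), together with $R(\l)(f'-\l f)=f$ for all $\{f,f'\}\in A$, which is what forces the dilating self-adjoint relation to extend $A$. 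It is worth knowing that the cited works sidestep the dilation verification altogether: given $\wt A$, the identity \eqref{2.32} is read off directly from the Straus family $T_\l(\wt A)=\{\{P_\gH f,P_\gH f'\}:\{f,f'\}\in\wt A,\; f'-\l f\in\gH\}$, which satisfies $P_\gH(\wt A-\l)^{-1}\up\gH=(T_\l(\wt A)-\l)^{-1}$ and which $\G$ maps onto a Nevanlinna family $-\tau(\l)$; surjectivity is then obtained by realizing a given $\tau\in\RH$ as the Weyl function of a boundary triplet for an auxiliary symmetric relation $A_r$ and forming the coupling --- precisely the representation \eqref{3.10} that the paper itself exploits in Proposition \ref{pr3.7}. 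With the kernel condition substituted for the resolvent identity your plan can be completed, but as stated the verification step would fail.
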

It follows from Theorem \ref{th2.14} that Krein formula \eqref{2.31} gives a parametrization $\wt A=\wt A_\tau$  of all exit space extensions $\wt A=\wt A^*$ of $A$ in terms of functions $\tau(\cd)\in\RH$. Without connection with boundary triplets formula \eqref{2.31} was originally proved in \cite{KreLan71,LanTex77}.
\section{Compressions of exit  space self-adjoint extensions}
\subsection{Parametrization of compressions}
Assume that $\wt \gH\supset\gH$ is a Hilbert space, $\gH_r:=\wt\gH\ominus\gH $,  $P_\gH$ is the orthoprojection in $\wt\gH$ onto $\gH$ and $\wt A=\wt A^*\in\C (\wt \gH)$ is an exit space extension of $A$. In the following we let
\begin{gather}
S(\wt A):=\wt A\cap \gH^2=\{\wh f\in\gH^2:\wh f \in \wt A\}\label{3.3}\\
T(\wt A):=\{\{P_\gH f, P_\gH f'\}:\{f,f'\}\in \wt A \}\label{3.3.1}\\
\CA:=P_{\gH}\wt A\up \gH=\{{\{f,f'\}\in \gH^2: \{f,f'\oplus f_r'\}\in \wt A}\;\;\text{\rm with some}\;\; f_r'\in\gH_r\}\label{3.4}
\end{gather}
Clearly, $S(\wt A)$ is a closed symmetric relation in $\gH$, $C(\wt A)$ is a symmetric relation in $\gH$, $T(\wt A)$ is a linear relation in $\gH$ and
\begin{gather}\label{3.4.1}
A\subset S(\wt A)\subset C(\wt A) \subset T(\wt A)\subset A^*.
\end{gather}
Moreover, $S(\wt A)\subset\wt A$ and according to \cite{Sht62,DM06} $(S(\wt A))^*=\ov {T(\wt A)}$.
\begin{definition}\label{def3.0}
The linear relation $\CA$ is called the compression of $\wt A$.
\end{definition}

The following theorem directly follows from the results of \cite{DM00,DM09}.
\begin{theorem}\label{th3.1}
Let $\Pi=\bt$ be a boundary triplet for  $A^*$, let $\tau\in \RH$ and let $\wt A_\tau=\wt A_\tau^*$ be the corresponding exit space extension of $A$. Then the equalities $S(\wt A)=A$ and $\ov{T(\wt A)}= T(\wt A)$ hold if and only if $\tau\in R_u[\cH]$.
\end{theorem}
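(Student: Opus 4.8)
The plan is to push both conditions through the Krein formula \eqref{2.32} and to read the answer off the block structure of a Nevanlinna function (Proposition \ref{pr2.2}). By \eqref{3.4.1} both $\SAt$ and $\TAt$ lie between $A$ and $A^*$, so by Proposition \ref{pr2.6}, (ii), each equals $A_\t$ for a suitable relation $\t$ in $\cH$; since $(\SAt)^*=\ov{\TAt}$, Proposition \ref{pr2.6}, (iii)--(iv) make the two requirements dual, so it suffices to settle separately (I) when $\SAt=A$ and (II) when $\TAt$ is closed. From \cite{DM00,DM09} I import the two structural descriptions on which the statement rests: $\SAt=A_{-\tau_\cap}$ with $\tau_\cap:=\bigcap_{\l\in\CR}\tau(\l)$, and $\TAt=\ov{\TAt}$ precisely when $\tau\in\RCH$.

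The inclusion $\SAt\subseteq A_{-\tau_\cap}$ I would check directly: if $\{f,f'\}\in\wt A_\tau\cap\gH^2$ then $f=P_\gH(\wt A_\tau-\l)^{-1}(f'-\l f)=(A_{-\tau(\l)}-\l)^{-1}(f'-\l f)$ for every $\l\in\CR$ by \eqref{2.32}, whence $\{\G_0\wh f,-\G_1\wh f\}\in\tau(\l)$ for all $\l$; the reverse inclusion is the point where minimality of $\wt A_\tau$ is used, and is the content borrowed from \cite{DM00,DM09}. To evaluate $\tau_\cap$ I use Proposition \ref{pr2.2} and \eqref{2.9}: writing $\tau=\tau_0\oplus\wh\cK$ over $\cH'\oplus\cH''\oplus\cK$ with $\cK=\mul\tau(\l)$, $B_1,B_2$ constant and $\ker\im\tau_1(\l)=\{0\}$, an element of $\tau_\cap$ has $h=u\oplus v\in\cH'\oplus\cH''$ with $\tau_0(\l)h$ constant in $\l$, so $\tau_1(\l)u$ is constant and hence the scalar Nevanlinna function $(\tau_1(\l)u,u)$ is constant. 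The symmetry $\tau_1^*(\l)=\tau_1(\ov\l)$ forces it to be real, so $(\im\tau_1(\l)u,u)=0$ and $u\in\ker\im\tau_1(\l)=\{0\}$. Thus $\tau_\cap=\{\{0\oplus v\oplus0,\ B_1v\oplus B_2v\oplus k\}:v\in\cH'',\,k\in\cK\}$, which is trivial exactly when $\cH''=\{0\}$ and $\cK=\{0\}$. Therefore $\SAt=A$ if and only if $\tau$ is operator-valued with $\ker\im\tau(\l)=\{0\}$, i.e. $\cH''=\cK=\{0\}$.

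It remains to assemble (I) and (II). If $\SAt=A$ and $\TAt$ is closed, then $\cH''=\cK=\{0\}$ and $\tau\in\RCH$; but $\cK=\{0\}$ makes $\tau$ operator-valued, so $\tau\in R_c[\cH]$, and $\cH''=\{0\}$ gives $\cH'=\cH$ and $\tau_1=\tau$, whence the last assertion of Proposition \ref{pr2.2} turns $R_c[\cH]$ into $R_u[\cH]$. Conversely, if $\tau\in R_u[\cH]$ then $\tau$ is operator-valued with $\ker\im\tau(\l)=\{0\}$, so $\SAt=A$ by the previous paragraph, and $\tau\in R_c[\cH]\subset\RCH$, so $\TAt$ is closed. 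This proves the equivalence.

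The step I expect to be the main obstacle, and the one I would most lean on \cite{DM00,DM09} for, is the closedness criterion (II) in its nontrivial direction: showing that a non-closed $\ran\im\tau(\l)$ forces $\TAt\subsetneq\ov{\TAt}$. The natural route is the reduction of Lemma \ref{lem2.7}, which splits the degenerate block $\cH''$ and the multivalued block $\cK$ off into an intermediate symmetric relation $S=A_{\t_0}$ carrying its own boundary triplet $\Pi'$ for $S^*$, thereby reducing to the strictly positive parameter $\tau_1$; the closedness of $\TAt$ should then be detected through the asymptotics of the Weyl function (Theorem \ref{th2.12}) together with the domain-closedness criterion of Proposition \ref{pr2.7.2}, whose failure records exactly the non-closedness of $\ran\im\tau$.
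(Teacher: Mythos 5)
The paper contains no proof of Theorem \ref{th3.1} at all: it is stated with the bare citation to \cite{DM00,DM09}, so there is no in-house argument to match your proposal against, only the paper's surrounding architecture. Judged on its own terms, much of your reconstruction is sound: the inclusion $\SAt\subset A_{-\tau_\cap}$ via \eqref{2.32} is correct; your evaluation of $\tau_\cap=\bigcap_{\l\in\CR}\tau(\l)$ through the block decomposition of Proposition \ref{pr2.2} reproduces exactly the relation $\t_0$ of \eqref{2.20.1}, i.e.\ the answer the paper later records in Proposition \ref{pr3.3}; and the final assembly via the clause ``$\tau\in R_c[\cH]\iff\tau_1\in R_u[\cH']$'' of Proposition \ref{pr2.2} is right. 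Incidentally, the reverse inclusion $A_{-\tau_\cap}\subset\SAt$ needs no minimality, only $\wt A_\tau=\wt A_\tau^*$: any $\{h,h'\}\in\tau_\cap$ lies in $\tau(\l)\cap\tau^*(\l)$, so $(h',h)\in\bR$, whence $\im(f',f)=0$ for $\wh f\in A_{-\tau_\cap}$ by Green's identity; comparing $\im\bigl((\wt A_\tau-\l)^{-1}(f'-\l f),f'-\l f\bigr)=\im\l\,\|(\wt A_\tau-\l)^{-1}(f'-\l f)\|^2$ with $\im(f,f'-\l f)=\im\l\,\|f\|^2$ and using \eqref{2.32} forces $(\wt A_\tau-\l)^{-1}(f'-\l f)=f$, i.e.\ $\wh f\in\wt A_\tau$.

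There are, however, two genuine defects. First, your imported equivalence ``$\TAt$ closed $\iff\tau\in\RCH$'' is false as an unconditional statement: take $\tau(\l)\equiv\t$ with $\t=\t^*$ an unbounded operator in $\cH$ (so $\dim\cH=\infty$, the case of interest). By Theorem \ref{th2.14}, (ii), $\wt A_\tau=A_{-\t}$ is canonical, hence $\TAt=\wt A_\tau$ is closed, while $\tau\notin\RCH$ because its operator part is not even $\B(\cH_0)$-valued; correspondingly $\SAt\wh + A_0$ fails to be closed by Proposition \ref{pr2.7.2}. The correct statement is Theorem \ref{th3.5}, (i), where closedness of $\TAt$ is paired with closedness of $\SAt\wh + A_0$. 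Your argument survives this repair, since at the one point where you use the forward direction you also have $\SAt=A$, so $\SAt\wh + A_0=A_0$ is automatically closed --- but the repair must be made explicit, and the same example shows that your step (I), which rests on the decomposition \eqref{2.9} with a bounded operator part, does not literally cover all of $\RH$ (an imprecision inherited from the paper's statement of \eqref{2.9}). Second, and more structurally: inside this paper both of your pillars sit downstream of Theorem \ref{th3.1} itself. Proposition \ref{pr3.3} and Theorem \ref{th3.5} are proved by applying Theorem \ref{th3.1} to the intermediate extension $S$ through Lemma \ref{lem3.2}, and the route you sketch for the hard direction of (II) --- Lemma \ref{lem2.7} together with Theorem \ref{th2.12} and Proposition \ref{pr2.7.2} --- is precisely the machinery the paper runs \emph{after} assuming Theorem \ref{th3.1}. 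Carried out inside the paper, your proof is therefore circular; sourcing the two pillars from \cite{DM00,DM09} instead avoids the circle, but then you have not proved the theorem so much as re-cited it in two pieces, which is exactly what the paper's one-line ``proof'' already does.
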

\begin{lemma}\label{lem3.2}
Let the assumptions of Lemma \ref{lem2.7} be satisfied. Moreover, let $S\in\cex$ be symmetric extension \eqref{2.21} of $A$, let $\Pi'=\{\cH',\G_0',\G_1'\}$ be  boundary triplet \eqref{2.23} for $S^*$, let $\tau_1(\cd)\in R_u[\cH']$ and let $\wt A =\wt S_{\tau_1}$ be the corresponding exit space self-adjoint  extension of $S$. Assume also that $\tau=\{\cH'\oplus\cH''\oplus\cK,B_1,B_2,\tau_1\}\in \wt R_c (\cH)$ (see Remark \ref{rem2.4a}). Then $\wt A\supset A$ and $\wt A=\wt A_\tau$ (in the triplet $\Pi$).
\end{lemma}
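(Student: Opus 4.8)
The plan is to reduce the assertion to the Krein formula in the form \eqref{2.32} together with the structural identity in Lemma \ref{lem2.7}, (iii). The inclusion $\wt A\supset A$ is immediate: since $\wt A=\wt S_{\tau_1}$ is an exit space extension of $S$ we have $\wt A\supset S$, while $S\supset A$ by \eqref{2.21}; moreover the minimality condition $\ov{{\rm span}}\{\gH,(\wt A-\l)^{-1}\gH:\l\in\CR\}=\wt\gH$ involves only $\gH$ and $\wt A$, so it is inherited and $\wt A$ is a genuine exit space extension of $A$.

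For the equality $\wt A=\wt A_\tau$ I would argue through generalized resolvents, which by Theorem \ref{th2.14} determine an exit space self-adjoint extension uniquely up to equivalence. First I would apply Theorem \ref{th2.14} to the boundary triplet $\Pi'=\{\cH',\G_0',\G_1'\}$ for $S^*$ (supplied by Lemma \ref{lem2.7}, (ii)) and to the parameter $\tau_1\in R_u[\cH']\subset\wt R(\cH')$: by \eqref{2.32} the generalized resolvent of $\wt A=\wt S_{\tau_1}$ equals $(S_{-\tau_1(\l)}-\l)^{-1}$, where $S_{-\tau_1(\l)}$ is the canonical extension of $S$ determined in $\Pi'$ by the constant (operator) parameter $-\tau_1(\l)$. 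Since $\tau_1\in R_u[\cH']$, every value $\tau_1(\l)$, and hence $-\tau_1(\l)$, belongs to $\B(\cH')$, so the last part of Lemma \ref{lem2.7}, (iii) (the case $\t\in\B(\cH')$) applies with $\t=-\tau_1(\l)$ and gives $S_{-\tau_1(\l)}=A_{\wt\t(\l)}$ in the triplet $\Pi$, where $\wt\t(\l)$ is the relation \eqref{2.23.0.1}.

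The computational core is the identification $\wt\t(\l)=-\tau(\l)$. Here I would insert the block representation \eqref{2.9.2} of $\tau=\{\cH'\oplus\cH''\oplus\cK,B_1,B_2,\tau_1\}$ furnished by Corollary \ref{cor2.4} and compare it, slot by slot, with \eqref{2.23.0.1} evaluated at $\t=-\tau_1(\l)$: negating the right-hand side of \eqref{2.9.2} turns $(\tau_1(\l)h-B_1\f)\oplus(-B_1^*h-B_2\f)\oplus k$ into $(-\tau_1(\l)h+B_1\f)\oplus(B_1^*h+B_2\f)\oplus(-k)$, which is exactly the right-hand side of \eqref{2.23.0.1} once one notes that $-k$ runs over all of $\cK$ as $k$ does. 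Thus $A_{-\tau(\l)}=A_{\wt\t(\l)}=S_{-\tau_1(\l)}$, and applying \eqref{2.32} for the triplet $\Pi$ shows that $\wt A$ and $\wt A_\tau$ generate one and the same generalized resolvent of $A$. The uniqueness up to equivalence and the bijectivity of the correspondence in Theorem \ref{th2.14} then force $\wt A=\wt A_\tau$.

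I expect the only genuine difficulty to be the bookkeeping in $\wt\t(\l)=-\tau(\l)$: one must keep track of the fact that the parameter handed to $\Pi'$ is $-\tau_1(\l)$ rather than $\tau_1(\l)$, that the off-diagonal blocks $B_1,B_1^*$ and the diagonal block $B_2$ enter \eqref{2.9.2} with signs opposite to those in \eqref{2.23.0.1}, and that the multivalued part $\cK$ is invariant under negation. Beyond this sign-and-block matching, every step is a direct invocation of results already in hand (Theorem \ref{th2.14}, Lemma \ref{lem2.7}, Corollary \ref{cor2.4}).
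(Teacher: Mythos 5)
Your proof is correct and follows essentially the same route as the paper's: both pass through the generalized resolvent identity \eqref{2.32} in the triplet $\Pi'$ to get $P_\gH(\wt A-\l)^{-1}\up\gH=(S_{-\tau_1(\l)}-\l)^{-1}$, identify $S_{-\tau_1(\l)}=A_{-\tau(\l)}$ via Lemma \ref{lem2.7}, (iii) applied with $\t=-\tau_1(\l)$ (the same sign-and-block comparison of \eqref{2.9.2} with \eqref{2.23.0.1}, including the observation that $\cK$ is invariant under negation), and conclude by \eqref{2.32} in $\Pi$ together with the uniqueness up to equivalence in Theorem \ref{th2.14}. Your explicit check of $\wt A\supset A$ and of the inherited minimality condition is left tacit in the paper but is a welcome addition, not a deviation.
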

\begin{proof}
According to \eqref{2.32}
\begin{gather}\label{3.5}
P_\gH(\wt A-\l)^{-1} \up\gH=(\wt A(\l)-\l)^{-1}, \quad \l\in\CR,
\end{gather}
where $\wt A(\l)=S_{-\tau_1(\l)}$ (in the triplet $\Pi'$). Moreover,
\begin{gather*}
-\tau(\l)=\{h\oplus\f, (-\tau_1(\l)h +B_1\f)\oplus (B_1^*h +B_2\f)\oplus k\}:h\oplus\f\in\cH'\oplus\cH'', k\in\cK\}.
\end{gather*}
Comparing this equality  with \eqref{2.23.0.1} we obtain from Lemma \ref{lem2.7}, (iii) that $\wt A(\l)\in\cex$ and $\wt A(\l)=A_{-\tau(\l)}$ (in the triplet $\Pi$). Therefore by \eqref{3.5} and \eqref{2.32} $\wt A=\wt A_\tau$ (in the triplet $\Pi$).
\end{proof}
\begin{proposition}\label{pr3.3}
Let $\Pi=\bt$ be a boundary triplet for $A^*$, let $\tau\in \RCH$ and let $\wt A_\tau=\wt A_\tau^*$ be the corresponding exit space extension of $A$. Assume also  that
\begin{gather}\label{3.6}
\tau=\{\cH'\oplus\cH''\oplus\cK,B_1,B_2,\tau_1\}
\end{gather}
(see Remark \ref{rem2.4a}), $\G_0$ and $\G_1$ have the block representations \eqref{2.20} and  $\t_0\in \C (\cH)$  is given by  \eqref{2.20.1}. Then the extension $S=S(\wt A_\tau)$ admits the representation \eqref{2.21} (in the triplet $\Pi$).
\end{proposition}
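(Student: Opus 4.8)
The plan is to factor the construction of $\wt A_\tau$ out of $A$ through the intermediate symmetric relation $A_{\t_0}$ given by \eqref{2.21}, using Lemma \ref{lem3.2} to realize $\wt A_\tau$ as an exit space extension of $A_{\t_0}$ whose parameter is the \emph{uniformly strict} function $\tau_1$, and then to invoke Theorem \ref{th3.1} at the level of $A_{\t_0}$ in order to read off the symmetric part. Throughout I write $S:=A_{\t_0}$ for the relation \eqref{2.21}, so that the assertion to be proved is $S(\wt A_\tau)=S$.

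First I would check that the data of the present proposition are exactly the hypotheses of Lemmas \ref{lem2.7} and \ref{lem3.2}. The decomposition \eqref{2.19} and the operators $B_1,B_2$ (with $B_2=B_2^*$) are furnished by the representation \eqref{3.6} of $\tau$, the block forms \eqref{2.20} of $\G_0,\G_1$ are assumed, and $\t_0$ is the relation \eqref{2.20.1}; moreover, since $\tau\in\RCH$, Corollary \ref{cor2.4} guarantees that the corner block $\tau_1$ belongs to $R_u[\cH']$. Lemma \ref{lem2.7} then produces the symmetric extension $S=A_{\t_0}\in\cex$ of the form \eqref{2.21} together with the boundary triplet $\Pi'=\{\cH',\G_0',\G_1'\}$ for $S^*$ defined by \eqref{2.23}. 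With these objects in hand Lemma \ref{lem3.2} applies verbatim and yields $\wt A_\tau=\wt S_{\tau_1}$, i.e. the exit space extension of $A$ attached to $\tau$ coincides, as a self-adjoint relation in $\wt\gH$, with the exit space extension of $S$ attached to $\tau_1$.

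The second step is to apply Theorem \ref{th3.1} to the symmetric relation $S$ in place of $A$, with the boundary triplet $\Pi'$ and the parameter $\tau_1$. Since $\tau_1\in R_u[\cH']$, that theorem gives $\wt S_{\tau_1}\cap\gH^2=S$. Combining this with the identification from the first step and recalling the definition \eqref{3.3} of the symmetric part, I obtain
\begin{gather*}
S(\wt A_\tau)=\wt A_\tau\cap\gH^2=\wt S_{\tau_1}\cap\gH^2=S=A_{\t_0},
\end{gather*}
which is precisely the representation \eqref{2.21} claimed.

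The point deserving the most care — and the only genuine obstacle — is the bookkeeping of the ambient spaces: the symmetric-part operation $\wt A\mapsto\wt A\cap\gH^2$ is taken with respect to the \emph{same} subspace $\gH$ both when $\wt A_\tau$ is viewed as an exit space extension of $A$ and when $\wt S_{\tau_1}$ is viewed as an exit space extension of $S$, because $A$ and $S=A_{\t_0}$ are relations in one and the same $\gH$. Thus the two intersections with $\gH^2$ literally agree once Lemma \ref{lem3.2} has identified $\wt A_\tau$ with $\wt S_{\tau_1}$, and no appeal to the equivalence of exit space extensions is needed. The strictness $\tau_1\in R_u[\cH']$, which is exactly what the class $\RCH$ encodes through Corollary \ref{cor2.4}, is the mechanism that (via Theorem \ref{th3.1}) forces the symmetric part to saturate precisely at $S$ rather than at some larger relation.
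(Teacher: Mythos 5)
Your proposal is correct and follows essentially the same argument as the paper's own proof: both pass to $S=A_{\t_0}$ and the boundary triplet $\Pi'=\{\cH',\G_0',\G_1'\}$ via Lemma \ref{lem2.7}, identify $\wt A_\tau$ with $\wt S_{\tau_1}$ by Lemma \ref{lem3.2}, and apply Theorem \ref{th3.1} with $\tau_1\in R_u[\cH']$ to conclude $S(\wt A_\tau)=S$. The only (immaterial) difference is the order of the last two steps, and your bookkeeping remark — that the intersection with $\gH^2$ is unambiguous because $A$ and $S$ are relations in the same space $\gH$ — is exactly the fact the paper uses implicitly.
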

\begin{proof}
Clearly the assumptions of Lemma \ref{lem2.7} are satisfied with the subspaces $\cH',\cH'',\cK$ and the operator $B=(B_1,B_2)^\top$. Let $S\in \cex $ be  symmetric extension \eqref{2.21} and let $\Pi'=\{\cH',\G_0',\G_1'\}$ be boundary triplet  \eqref{2.23} for $S^*$. Assume that $\wt A: =\wt S_{\tau_1}$ (in the triplet $\Pi'$). Since $\tau_1\in R_u [\cH']$, it follows from Theorem \ref{th3.1} that $S(\wt A)=S$. Moreover, according to Lemma \ref{lem3.2} $A\subset \wt A$ and $\wt A=\wt A_\tau$ (in the triplet $\Pi$), which proves the required statement.
\end{proof}
\begin{corollary}\label{cor3.4}
Let $\Pi=\bt$ be a boundary triplet for $A^*$, let $A_0=\ker\G_0$ and let $\tau\in \RCH$.  Then $S(\wt A_\tau)\cap A_0=A$ if and only if $\tau\in R_c[\cH]$.
\end{corollary}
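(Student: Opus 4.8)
The plan is to reduce the statement, via Proposition \ref{pr3.3}, to the intersection computation already carried out in the proof of Corollary \ref{cor2.7.3}. First I would invoke Proposition \ref{pr3.3}: writing $\tau=\{\cH'\oplus\cH''\oplus\cK,B_1,B_2,\tau_1\}$ in accordance with Remark \ref{rem2.4a} and fixing the block decomposition \eqref{2.20} of $\G_0,\G_1$ along $\cH=\cH'\oplus\cH''\oplus\cK$, the symmetric extension $S:=\SAt$ is precisely $A_{\t_0}$, where $\t_0$ is the relation \eqref{2.20.1}. The observation that drives everything is that $\mul\t_0=\cK$: indeed $\{0,\f\}\in\t_0$ forces $\f\in\{0\}\oplus\{0\}\oplus\cK$, so $\t_0$ is an operator exactly when $\cK=\{0\}$.

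Next I would determine when $A_{\t_0}\cap A_0=A$, repeating the first step of the proof of Corollary \ref{cor2.7.3} verbatim. Since $A_0=A_{\{0\}\oplus\cH}$, Proposition \ref{pr2.6}, (i) gives $A_{\t_0}\cap A_0=A$ if and only if $\t_0\cap(\{0\}\oplus\cH)=\{0\}$, that is, if and only if $\t_0$ is an operator, i.e. if and only if $\mul\t_0=\{0\}$. Combined with the identification $\mul\t_0=\cK$ from the previous paragraph, this yields the intermediate equivalence $\SAt\cap A_0=A\iff\cK=\{0\}$.

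Finally I would translate $\cK=\{0\}$ back into a statement about the class of $\tau$. For $\tau\in\RCH$ the decompositions \eqref{2.9} hold with $\cK=\mul\tau(\l)$ and operator part $\tau_0\in R_c[\cH_0]$; hence $\cK=\{0\}$ forces $\cH_0=\cH$ and $\tau=\tau_0\in R_c[\cH]$, while conversely $\tau\in R_c[\cH]\subset R[\cH]$ is operator valued, so $\mul\tau(\l)=\cK=\{0\}$. Thus $\cK=\{0\}\iff\tau\in R_c[\cH]$, and chaining the two equivalences gives $\SAt\cap A_0=A\iff\tau\in R_c[\cH]$.

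I do not expect a genuine obstacle here: the inclusion $A\subset\SAt\cap A_0$ is automatic from $A\subset S(\wt A_\tau)$ and $A\subset A_0$, so only the reverse inclusion carries content, and it is governed entirely by the operator-versus-multivalued-part dichotomy above. The one point that must be checked with care is the identification $\mul\t_0=\cK$ together with the fact that, inside the class $\RCH$, vanishing of the multivalued part $\cK$ is exactly what upgrades $\tau$ from $\RCH$ to its subclass $R_c[\cH]$; everything else is a direct citation of Proposition \ref{pr3.3}, Proposition \ref{pr2.6} and the argument of Corollary \ref{cor2.7.3}.
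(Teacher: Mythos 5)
Your proposal is correct and takes essentially the same route as the paper: the paper's proof likewise invokes Proposition \ref{pr3.3} to get $\SAt=A_{\t_0}$ with $\t_0$ as in \eqref{2.20.1}, then states $A_{\t_0}\cap A_0=A_{\wh\cK}$ and reads off $\SAt\cap A_0=A\iff \cK=\{0\}\iff \tau\in R_c[\cH]$, which is exactly your identification $\mul\t_0=\cK$ combined with the intersection step you borrow from the proof of Corollary \ref{cor2.7.3}. Your write-up merely makes explicit the details the paper compresses into the word ``obviously.''
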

\begin{proof}
Assume that $\tau$ is represented as in \eqref{3.6}. Then according to Proposition \ref{pr3.3} $S(\wt A_\tau)=A_{\t_0}$ with $\t_0$ given by \eqref{2.20.1}. Since obviously $A_{\t_0}\cap A_0=A_{\wh\cK}$, the equivalences $S(\wt A_\tau)\cap A_0=A \Leftrightarrow \cK=\{0\}\Leftrightarrow \tau \in R_c[\cH] $ are valid.
\end{proof}
In the following theorem we give a geometric characterization of self-adjoint extensions $\wt A_\tau$ corresponding to $\tau\in \RCH$.
\begin{theorem}\label{th3.5}
Assume that $\Pi=\bt$ is a boundary triplet for $A^*$ and $A_0=\ker\G_0$. Let $\tau\in \wt R(\cH)$ and let $\wt A_\tau $ be the corresponding exit space self-adjoint extension of $A$. Then:

{\rm (i)} The linear relations $ S(\wt A_\tau)\wh + A_0$ and $T(\wt A_\tau)$  are closed if and only if $\tau\in\RCH$.

{\rm (ii)} $S(\wt A_\tau)\cap A_0=A$ and the linear relations $ S(\wt A_\tau)\wh + A_0$ and $T(\wt A_\tau)$  are closed if and only if $\tau\in R_c[\cH]$.
\end{theorem}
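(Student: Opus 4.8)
The plan is to reduce both statements to Theorem \ref{th3.1} by stripping off the part of $\tau$ responsible for a non-trivial multivalued part and for $\ker\im\tau(\l)$. Given $\tau\in\RH$, I would first pass to the constant multivalued part $\cK=\mul\tau(\l)$ and then apply Proposition \ref{pr2.2} to the operator part, obtaining the decomposition $\cH=\cH'\oplus\cH''\oplus\cK$ of Corollary \ref{cor2.4} with bounded blocks $B_1,B_2$ and a function $\tau_1\in R[\cH']$ satisfying $\ker\im\tau_1(\l)=\{0\}$; here $\tau\in\RCH$ \emph{exactly} when $\tau_1\in R_u[\cH']$. Setting $S:=A_{\t_0}$ with $\t_0$ as in \eqref{2.20.1} and taking the induced triplet $\Pi'=\{\cH',\G_0',\G_1'\}$ for $S^*$ from Lemma \ref{lem2.7}, the Krein-formula computation in Lemma \ref{lem3.2} (which uses only \eqref{2.32} and Lemma \ref{lem2.7}\,(iii), and hence survives for any Nevanlinna $\tau_1\in R[\cH']$) gives $\wt A_\tau=\wt S_{\tau_1}$. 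Since $S(\cdot)$ and $T(\cdot)$ in \eqref{3.3}, \eqref{3.3.1} depend only on the exit space extension and on $\gH$, we have $S(\wt A_\tau)=S(\wt S_{\tau_1})$ and $T(\wt A_\tau)=T(\wt S_{\tau_1})$, so Theorem \ref{th3.1} for the pair $(S,\Pi',\tau_1)$ becomes the main tool. Note also that, by \eqref{3.4.1} applied with base $S$, one always has $A_{\t_0}=S\subset S(\wt A_\tau)$.

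For the forward implication of (i) assume $\tau\in\RCH$, i.e. $\tau_1\in R_u[\cH']$. Then Proposition \ref{pr3.3} gives $S(\wt A_\tau)=A_{\t_0}$; since $\dom\t_0=\cH''$ is closed, Proposition \ref{pr2.7.2} shows that $S(\wt A_\tau)\wh+A_0$ is closed. Applying Theorem \ref{th3.1} to $(S,\Pi',\tau_1)$ with $\tau_1\in R_u[\cH']$ yields $\ov{T(\wt S_{\tau_1})}=T(\wt S_{\tau_1})$, that is, $T(\wt A_\tau)$ is closed. This settles one direction.

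For the converse I would argue by contraposition. If $\tau\notin\RCH$ then $\tau_1\notin R_u[\cH']$, so by the ``only if'' part of Theorem \ref{th3.1} the equalities $S(\wt A_\tau)=S$ and $\ov{T(\wt A_\tau)}=T(\wt A_\tau)$ cannot both hold. Hence, if $T(\wt A_\tau)$ is assumed closed, the containment $A_{\t_0}=S\subset S(\wt A_\tau)$ must be strict. Writing $S(\wt A_\tau)=A_{\t_S}$ with a closed symmetric $\t_S\supsetneq\t_0$, Proposition \ref{pr2.7.2} reduces closedness of $S(\wt A_\tau)\wh+A_0$ to closedness of $\dom\t_S$, so it suffices to show that $\dom\t_S$ is \emph{not} closed when $\tau_1\in R[\cH']\setminus R_u[\cH']$. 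I expect this to be the main obstacle. The natural route is to identify the ``extra'' part of $\t_S$ over $\t_0$ with the limit relation generated by $\tau_1$ at $i\infty$: using Proposition \ref{pr2.1.1} and the description of the forbidden relation $\cF=\cN_M\dotplus\wh\mul\cF$ in Theorem \ref{th2.12} (for the Weyl function of $\Pi'$), the relevant domain should be the manifold $\dom\cN_{\tau_1}$ of \eqref{2.3.1}, which fails to be closed precisely in the regime $\ker\im\tau_1(\l)=\{0\}$, $\tau_1\notin R_u[\cH']$. Making this identification rigorous, and checking that the non-closedness of $\dom\cN_{\tau_1}$ transfers to $\dom\t_S$ under the passage from $\Pi'$ back to $\Pi$, is the heart of the argument.

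Finally, (ii) follows by combining (i) with Corollary \ref{cor3.4}. If $\tau\in R_c[\cH]$ then $\tau\in\RCH$, so by (i) both $S(\wt A_\tau)\wh+A_0$ and $T(\wt A_\tau)$ are closed, while Corollary \ref{cor3.4} gives $S(\wt A_\tau)\cap A_0=A$. Conversely, if $S(\wt A_\tau)\cap A_0=A$ and both relations are closed, then (i) forces $\tau\in\RCH$, and now Corollary \ref{cor3.4} is applicable and yields $\tau\in R_c[\cH]$ (since $S(\wt A_\tau)\cap A_0=A$). As $R_c[\cH]\subset\RCH$, the two characterizations match and the proof is complete.
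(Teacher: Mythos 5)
Your sufficiency direction of (i) (that $\tau\in\RCH$ forces both relations to be closed) and your derivation of (ii) from (i) via Corollary \ref{cor3.4} are correct and essentially the paper's own arguments; your observation that the computation in Lemma \ref{lem3.2} survives for any $\tau_1\in R[\cH']$ is also sound. The genuine gap is the necessity direction of (i), which you yourself leave open (``the heart of the argument''), and the route you sketch for it does not work. Your key claim --- that $\dom\cN_{\tau_1}$ ``fails to be closed precisely in the regime $\ker\im\tau_1(\l)=\{0\}$, $\tau_1\notin R_u[\cH']$'' --- is false: take $\tau_1(\l)=\l K$ with $K\geq 0$ injective and $\ran K$ non-closed; then $\ker\im\tau_1(\l)=\{0\}$ and $\tau_1\notin R_u[\cH']$, yet $y\im(\tau_1(iy)h,h)=y^2(Kh,h)\to\infty$ for every $h\neq 0$, so $\dom\cN_{\tau_1}=\{0\}$ is closed. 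Moreover, the tools you invoke to identify the ``extra part'' of $\t_S$ --- the forbidden relation of Theorem \ref{th2.12} via the coupling of Proposition \ref{pr3.7} --- presuppose that the parameter is the Weyl function of an ordinary boundary triplet, i.e.\ belongs to $R_u[\cH']$, which is exactly what fails in your contrapositive regime; and in any case the $\cN$-limit governs the compression $\CAt$ (Theorem \ref{th3.8}), not $\SAt$, which is in general strictly smaller. So nothing in your sketch excludes the scenario where $T(\wt A_\tau)$ and $\SAt\wh+A_0$ are both closed while $\tau_1\notin R_u[\cH']$.

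The paper closes this direction without ever proving non-closedness of anything, by choosing the intermediate symmetric relation differently: it takes $S:=\SAt$ itself rather than $A_{\t_0}$ built from the a priori decomposition of $\tau$. Assuming both closedness hypotheses, one writes $S=A_{\t_s}$ with $\t_s$ closed symmetric; Proposition \ref{pr2.7.2}, applied to the closed relation $S\wh+A_0$, shows that the operator part of $\t_s$ is bounded with closed domain, so Lemma \ref{lem2.7} yields a boundary triplet $\Pi'$ for $S^*$. For this base point the equality $S(\wt A_\tau)=S$ holds automatically, so Theorem \ref{th2.14} together with the ``only if'' part of Theorem \ref{th3.1} produces $\tau_1\in R_u[\cH']$ with $\wt A_\tau=\wt S_{\tau_1}$, and then Lemma \ref{lem3.2} plus the bijectivity of the Krein correspondence \eqref{2.31} gives $\tau=\{\cH'\oplus\cH''\oplus\cK,B_1,B_2,\tau_1\}\in\RCH$. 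By fixing the decomposition from $\tau$ at the outset you lose the automatic identity $S(\wt A_\tau)=S$ and are left with an unproved non-closedness claim that, as the example above shows, cannot be reduced to non-closedness of $\dom\cN_{\tau_1}$; if you adopt the paper's choice of base point, the necessity direction becomes a direct argument and your remaining steps go through unchanged.
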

\begin{proof}
(i) We put $S=S(\wt A_\tau)$ and $T=T(\wt A_\tau)$.  Assume that  $\ov{S\wh + A_0}= S\wh + A_0$ and $\ov{T}= T$. Since $S\in \cex$ and $S\subset S^*$, it follows from Proposition \ref{pr2.6}, (ii)  and (\romannumeral 5) that $S= A_{\t_0}$ with some symmetric relation $\t_0 \in \C (\cH)$. Moreover, by Proposition \ref{pr2.7.2} the decompositions
\begin{gather}\label{3.7}
\cH=\cH_0\oplus \cK, \qquad \t_0=B\oplus \wh\cK
\end{gather}
holds with $\cK=\mul\t_0, \; \wh\cK=\wh \mul\t_0$ and $B\in \B (\dom\t_0, \cH_0)$, where $\dom\t_0\subset \cH_0$ is closed. Let  $\cH''=\dom\t_0 $   and $\cH'=\cH_0\ominus \cH''$, so that $\cH_0=\cH'\oplus\cH''$. Then decomposition \eqref{2.19} of $\cH$ holds, the operator $B$ admits the block representation \eqref{2.20.0} and $\t_0$ can be written in the form \eqref{2.20.1}. Moreover, since $B$ is symmetric, the equality $B_2=B_2^*$ holds.  Therefore by Lemma \ref{lem2.7}, (ii) the equalities \eqref{2.23} define the boundary triplet $\Pi'=\{\cH',\G_0',\G_1'\}$ for $S^*$. Since $\wt A=\wt A_\tau$ is an extension of $S$, it follows from Theorem \ref{th3.1} that $\wt A=\wt A_{\tau_1}$ (in the triplet $\Pi'$) with  some $\tau_1 \in R_u [\cH']$. Therefore by Lemma \ref{lem3.2} $\tau\in \wt R_c(\cH)$.

Conversely, let $\tau\in \RCH$. Then by Corollary \ref{cor2.4} there exist a decomposition \eqref{2.9.1} of $\cH$, an operator $B=(B_1,B_2)^\top \in \B (\cH'', \cH'\oplus\cH'')$ and a function $\tau_1\in R_u[\cH']$ such that $\tau=\{\cH'\oplus\cH''\oplus\cK, B_1,B_2, \tau_1\}$. It follows from Proposition \ref{pr3.3} that $S=A_{\t_0}$ with $\t_0$ given by \eqref{2.20.1}. Therefore by Proposition \ref{pr2.7.2} $\ov{S\wh + A_0}= S\wh + A_0$. Next assume that $\Pi'=\{\cH',\G_0', \G_1'\}$ is a boundary triplet \eqref{2.23} for $S^*$. Then by Lemma \ref{lem3.2} $\wt A_\tau=\wt S_{\tau_1}$ (in the triplet $\Pi'$) and in view of Theorem \ref{th3.1} one has $\ov T= T$.

(ii) Combining statement (i) with Corollary \ref{cor3.4} we arrive at statement (ii).
\end{proof}
\begin{proposition}\label{pr3.7}
Assume that $\Pi=\bt$ is a boundary triplet for $A^*$, $\tau\in R_u[\cH]$ and $\wt A_\tau =\wt A_\tau^*$ is the corresponding exit space extension of $A$. Let $\wt A_\tau\in \C (\wt \gH)$ with a Hilbert space $\wt \gH\supset \gH$ and let $\gH_r=\wt\gH \ominus \gH$. Then:

{\rm (i)} There exist a symmetric relation $A_r\in\C (\gH_r)$ and a boundary triplet $\Pi_r=\{\cH,\G_0^r,\G_1^r\}$ for $A_r^*$ such that $\tau$ is the Weyl function of $\Pi_r$ and
\begin{gather}\label{3.10}
\wt A_\tau=\{\wh f\oplus\wh f_r\in A^*\oplus A_r^*: \G_0\wh f =\G_0^r \wh f_r, \,\G_1\wh f = -\G_1^r \wh f_r   \}.
\end{gather}

{\rm (ii)} $\CAt=A_{-\cF_r}$ (in the triplet $\Pi$), were   $\cF_r$ is the forbidden relation of $\Pi_r$.
\end{proposition}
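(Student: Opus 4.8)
The plan is to manufacture $A_r$ and $\Pi_r$ by a realization of $\tau$, to recover $\wt A_\tau$ as the coupling \eqref{3.10} of $\Pi$ and $\Pi_r$, and then to read off the compression directly from \eqref{3.4} and Definition \ref{def2.11}. For part (i), I would first invoke the realization theorem for uniformly strict Nevanlinna functions from \cite{DM91,Mal92,DM95}: since $\tau\in R_u[\cH]$, there is a simple symmetric relation $A_r\in\C(\gH_r^0)$ in a Hilbert space $\gH_r^0$ and a boundary triplet $\Pi_r=\{\cH,\G_0^r,\G_1^r\}$ for $A_r^*$ whose Weyl function is $\tau$. Using $\Pi$ and $\Pi_r$ I form the coupled relation $\wt A^0$ in $\gH\oplus\gH_r^0$ by the interface conditions of \eqref{3.10}. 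Adding the Green identities \eqref{2.10} for $\Pi$ and for $\Pi_r$ and substituting $\G_0\wh f=\G_0^r\wh f_r$, $\G_1\wh f=-\G_1^r\wh f_r$ makes the right-hand side cancel, so $\wt A^0$ is symmetric; self-adjointness follows since for $\l\in\bC_+$ one has $\im(\tau(\l)+M(\l))\geq \im\tau(\l)>0$ (because $\tau\in R_u[\cH]$ and $\im M(\l)\geq0$), so $\tau(\l)+M(\l)$ is boundedly invertible and $(\wt A^0-\l)(f\oplus f_r)=g\oplus g_r$ is solvable for every right-hand side, giving $\ran(\wt A^0-\l)=\gH\oplus\gH_r^0$.

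The resolvent identification is the clean step. For $h\in\gH$ put $f\oplus f_r=(\wt A^0-\l)^{-1}(h\oplus0)$; then $\wh f=\{f,\l f+h\}\in A^*$ and $\wh f_r=\{f_r,\l f_r\}\in\wh\gN_\l(A_r)$. By Proposition \ref{pr2.8} applied to $\Pi_r$ one has $\G_1^r\wh f_r=\tau(\l)\G_0^r\wh f_r$, and the interface conditions give $\G_1\wh f=-\G_1^r\wh f_r=-\tau(\l)\G_0^r\wh f_r=-\tau(\l)\G_0\wh f$, i.e. $\wh f\in A_{-\tau(\l)}$. Since $(\l f+h)-\l f=h$, this yields $f=(A_{-\tau(\l)}-\l)^{-1}h$, so that $P_\gH(\wt A^0-\l)^{-1}\up\gH=(A_{-\tau(\l)}-\l)^{-1}$, which by \eqref{2.32} is exactly the generalized resolvent of $\wt A_\tau$. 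As $A_r$ is simple, $\wt A^0$ is a minimal exit space extension of $A$, so by the uniqueness up to equivalence stated before Theorem \ref{th2.14} there is a unitary $V\in\B(\gH_r^0,\gH_r)$ intertwining $\wt A^0$ and $\wt A_\tau$; transporting $A_r$ and $\Pi_r$ by $V$ we may assume $\gH_r^0=\gH_r=\wt\gH\ominus\gH$, and then $\wt A_\tau$ is given by \eqref{3.10} with Weyl function $\tau$, proving (i).

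For part (ii) I would unwind \eqref{3.4} through \eqref{3.10}. A pair $\{f,f'\}\in\gH^2$ lies in $\CAt$ iff $\{f\oplus0,\,f'\oplus f_r'\}\in\wt A_\tau$ for some $f_r'\in\gH_r$; by \eqref{3.10} this means there are $\wh f=\{f,f'\}\in A^*$ and $\wh f_r=\{0,f_r'\}\in A_r^*$ with $\G_0\wh f=\G_0^r\wh f_r$ and $\G_1\wh f=-\G_1^r\wh f_r$. The vanishing of the first component forces $\wh f_r\in\wh\mul A_r^*$, so $f_r'\in\gN(A_r)$ and, by Definition \ref{def2.11}, $\{\G_0^r\wh f_r,\G_1^r\wh f_r\}\in\cF_r$. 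The interface conditions then read $\{\G_0\wh f,\G_1\wh f\}=\{\G_0^r\wh f_r,-\G_1^r\wh f_r\}\in-\cF_r$, that is $\wh f\in A_{-\cF_r}$ by \eqref{2.10.1}; the converse is obtained by reversing these steps, choosing $n\in\gN(A_r)$ with $\{\G_0^r\{0,n\},\G_1^r\{0,n\}\}=\{\G_0\wh f,-\G_1\wh f\}$ and setting $f_r'=n$. Hence $\CAt=A_{-\cF_r}$.

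The main obstacle is part (i): one must supply and carefully cite the realization of $\tau\in R_u[\cH]$ as a Weyl function, establish self-adjointness of the coupling, and pass from the abstract minimal model on $\gH_r^0$ to the prescribed complement $\gH_r=\wt\gH\ominus\gH$ through the equivalence of exit space extensions. Once $\wt A_\tau$ is written in the form \eqref{3.10}, part (ii) is a direct algebraic consequence of the definitions of compression and of the forbidden relation.
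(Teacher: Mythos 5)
Your proposal is correct, and your part (ii) is exactly the paper's argument: the paper restates the claim via Proposition \ref{pr2.6}, (ii) as the equality $\CAt=\{\wh f\in A^*:\{\G_0\wh f,-\G_1\wh f\}\in\cF_r\}$ and then verifies both inclusions by the same unwinding of \eqref{3.4} through \eqref{3.10} that you perform, with $\wh f_r=\{0,f_r'\}$, $f_r'\in\mul A_r^*=\gN(A_r)$, and the interface conditions translating into membership in $\cF_r$. The genuine difference is part (i): the paper disposes of it in one line by citing \cite{DM00,DM09}, whereas you reconstruct the underlying coupling model — realization of $\tau\in R_u[\cH]$ as the Weyl function of a boundary triplet of a simple symmetric relation $A_r$, symmetry of the coupled relation from adding the two Green identities, self-adjointness from the uniform invertibility of $\tau(\l)+M(\l)$ for $\l\in\CR$, identification of the compressed resolvent with $(A_{-\tau(\l)}-\l)^{-1}$ so that \eqref{2.32} pins down $\wt A_\tau$ up to equivalence, minimality from simplicity of $A_r$, and transport to the prescribed exit space $\gH_r=\wt\gH\ominus\gH$ via the equivalence statement preceding Theorem \ref{th2.14}. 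This is substantively the same route as the cited results, so what you buy is self-containedness at the cost of length; note, however, that two of your steps are asserted rather than proved: surjectivity of $\wt A^0-\l$ (it reduces, by expressing $\wh f$ and $\wh f_r$ through $(A_0-\l)^{-1}$ and the $\g$-fields, to solving $(\tau(\l)+M(\l))h=-\g^*(\ov\l)g-\g_r^*(\ov\l)g_r$, solvable by the bounded invertibility you established), and the implication that simplicity of $A_r$ yields minimality of the coupling (which uses that $\g^*(\ov\l)$ has dense range and that the defect subspaces of a simple $A_r$ span $\gH_r^0$). Both are standard and contained in \cite{DM00,DM09}, so these are citable gaps rather than flaws; you also correctly flagged why minimality matters here — the paper's definition of exit space extension includes the minimality condition, without which the equivalence argument would not apply.
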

\begin{proof}
Statement (i) directly follows from the results of \cite{DM00,DM09}.

(i) According to Proposition \ref{pr2.6}, (ii) statement (ii) is equivalent to the equality
\begin{gather}\label{3.11}
\CAt=\{\wh f\in A^*:  \{\G_0\wh f, -\G_1 \wh f\}\in \cF_r\}.
\end{gather}
Let $\wh f=\{f,f'\}\in\CAt$. Then $\wh f\in A^*$ and by \eqref{3.4} there exists $f_r'\in\gH_r$ such that $\wh g:=\{f,f'\oplus f_r'\} \in \wt A_\tau$. Letting $\wh f_r=\{0,f_r'\}\in\gH_r^2$ one gets $\wh g=\wh f\oplus \wh f_r$. Therefore $\wh f_r\in A_r^*$ and hence $f_r'\in\mul A_r^*$. Moreover, by \eqref{3.10} $\{\G_0\wh f, -\G_1\wh f\}=\{\G_0^r\wh f_r, \G_1^r\wh f_r\}\in \cF_r$.

Conversely, let $\wh f=\{f,f'\}\in A^*$ and $\{\G_0\wh f, -\G_1\wh f\}\in \cF_r$. Then there exists $f_r'\in \mul A_r^*$  such that $\G_0^r \{0,f_r'\}=\G_0\wh f$ and $\G_1^r \{0,f_r'\}=-\G_1\wh f$. Letting $\wh f_r=\{0,f_r'\}\in A_r^*$ we obtain from \eqref{3.10} that $\wh g:= \wh f\oplus \wh f_r=\{f,f'\oplus f_r'\} \in \wt A_\tau$ and therefore $\wh f=\{f,f'\}\in \CAt$. This proves \eqref{3.11}.
\end{proof}
In the following theorem we  parameterize in terms of $\tau$  the compressions $\CAt$ of exit space extensions $\wt A_\tau $ with $\tau\in\RCH$.
\begin{theorem}\label{th3.8}
Assume that $\Pi=\bt$ is a boundary triplet for $A^*$, $\tau\in\RCH$, $\wt A_\tau=\wt A_\tau^*$ is the corresponding exit space extension of $A$ and $\CAt$ is the compression of $\wt A_\tau$. Moreover, let $\tau_0\in R_c[\cH_0]$ and $\cK$ be the operator and multivalued parts of $\tau$ respectively (see \eqref{2.9}) and let  $\cB_{\tau_0}\in\B (\cH_0)$ and $\cN_{\tau_0}:\dom\cN_{\tau_0}\to \cH_0 \;(\dom\cN_{\tau_0}\subset\cH_0)$ be operators corresponding to $\tau_0$ in accordance with Proposition \ref{pr2.1.1}. Then $\CAt=A_{\t_c}$ (in the triplet $\Pi$) with the symmetric linear relation $\t_c$ in $\cH$ given by
\begin{gather}\label{3.13}
\t_c=-\cN_{\tau_0}\dotplus \wh\mul \t_c=\{\{h,-\cN_{\tau_0}h+h'\}: h\in\dom \cN_{\tau_0}, h'\in  \mul \t_c\}.
\end{gather}
Moreover,
\begin{gather}\label{3.13.1}
\ran\cB_{\tau_0}\oplus \cK\subset \mul \t_c, \qquad \ov{\mul \t_c}=\ov{\ran\cB_{\tau_0}} \oplus \cK.
\end{gather}
If in addition $\ran \cB_{\tau_0}$ is closed, then $\ov{\mul \t_c}=\mul \t_c = \ran\cB_{\tau_0} \oplus \cK$ and hence
\begin{gather}\label{3.14}
\t_c=\{\{h,-\cN_{\tau_0}h+\cB_{\tau_0}\psi+k \}: h\in\dom \cN_{\tau_0}, \psi\in  \cH_0, k\in\cK\}.
\end{gather}
\end{theorem}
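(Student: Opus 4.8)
The plan is to reduce Theorem \ref{th3.8} to the already-established Proposition \ref{pr3.7}, which handles the special case $\tau \in R_u[\cH]$ (where the compression equals $A_{-\cF_r}$ for the forbidden relation $\cF_r$ of the exit-space triplet). The strategy is to decompose the general Nevanlinna parameter $\tau \in \RCH$ into its ``operator-and-uniform'' part together with the finite blocks $B_1, B_2$ and the multivalued part $\cK$, apply Lemma \ref{lem2.7} to pass to the smaller symmetric relation $S = S(\wt A_\tau)$ and its boundary triplet $\Pi'$ on $\cH'$, and then transport the representation of the compression back through Lemma \ref{lem2.7}(iii).

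\medskip
First I would write $\tau = \{\cH'\oplus\cH''\oplus\cK, B_1, B_2, \tau_1\}$ via Corollary \ref{cor2.4}, so that $\tau_1 \in R_u[\cH']$ and $\tau_0$ has the block form \eqref{2.9.1.1}. By Proposition \ref{pr3.3}, the symmetric relation $S = S(\wt A_\tau) = A_{\t_0}$ with $\t_0$ given by \eqref{2.20.1}, and by Lemma \ref{lem2.7}(ii) the collection $\Pi' = \{\cH', \G_0', \G_1'\}$ of \eqref{2.23} is a boundary triplet for $S^*$. Via Lemma \ref{lem3.2} we have $\wt A_\tau = \wt S_{\tau_1}$ in $\Pi'$. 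Now apply Proposition \ref{pr3.7}(ii) \emph{to the triplet $\Pi'$}: the compression $C(\wt A_\tau)$ equals $S_{-\cF_r'}$ (in $\Pi'$), where $\cF_r'$ is the forbidden relation of the exit-space triplet $\Pi_r$ whose Weyl function is $\tau_1$. By Theorem \ref{th2.12} applied to $\Pi_r$, we have $\cF_r' = \cN_{\tau_1} \dotplus \wh\mul\cF_r'$ with $\ran\cB_{\tau_1} \subset \mul\cF_r'$ and $\ov{\ran\cB_{\tau_1}} = \ov{\mul\cF_r'}$. Since $\tau_1 \in R_u[\cH']$ its imaginary part is boundedly invertible, so $\cB_{\tau_1}$ and $\cN_{\tau_1}$ are the restrictions to $\cH'$ of $\cB_{\tau_0}$ and $\cN_{\tau_0}$; here the key point is relating the operator/$\cN$-limits of $\tau_0$ to those of its $(1,1)$-block $\tau_1$, using that the $B_1, B_2, \cK$ parts are constant in $\l$.

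\medskip
The decisive step is to translate the parameter $-\cF_r'$ for $C(\wt A_\tau)$ in the triplet $\Pi'$ back into a parameter $\t_c$ in the original triplet $\Pi$. This is exactly the situation of Lemma \ref{lem2.7}(iii): setting $\t = -\cF_r'$ (a relation in $\cH'$ with operator part $-\cN_{\tau_1}$ and multivalued part $-\mul\cF_r' = \mul\cF_r'$), the lemma produces $\wt\t = \t_c$ in $\cH$ whose multivalued part is $\mul\t_c = \mul\cF_r' \oplus \cK$ (formula \eqref{2.23a}) and whose operator part, on $\dom\cN_{\tau_1}\oplus\cH''$, is built from $-\cN_{\tau_1}$, $B_1, B_1^*, B_2$. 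One then checks that this operator part coincides with $-\cN_{\tau_0}$ on $\dom\cN_{\tau_0} = \dom\cN_{\tau_1}\oplus\cH''$, because differentiating out the constant blocks shows $\cN_{\tau_0}(h\oplus\f) = (\cN_{\tau_1}h - B_1\f)\oplus(-B_1^*h - B_2\f)$, matching \eqref{2.9.1.1}; this yields precisely \eqref{3.13}. For the multivalued part, combining $\mul\t_c = \mul\cF_r'\oplus\cK$ with $\ran\cB_{\tau_1}\subset\mul\cF_r'$ and $\ov{\ran\cB_{\tau_1}}=\ov{\mul\cF_r'}$, together with $\ran\cB_{\tau_0} = \ran\cB_{\tau_1}$ (the lower block of $\tau_0$ is constant, so contributes nothing to $\cB_{\tau_0}$), gives \eqref{3.13.1}. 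Finally, if $\ran\cB_{\tau_0}$ is closed then $\ov{\mul\cF_r'} = \ran\cB_{\tau_1}$ forces $\mul\cF_r' = \ran\cB_{\tau_0}$, whence $\mul\t_c = \ran\cB_{\tau_0}\oplus\cK$ is closed and \eqref{3.14} follows by writing each element of $\ran\cB_{\tau_0}$ as $\cB_{\tau_0}\psi$.

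\medskip
I expect the main obstacle to be the careful bookkeeping that identifies $\cN_{\tau_0}$ and $\cB_{\tau_0}$ with the triplet-$\Pi'$ objects $\cN_{\tau_1}, \cB_{\tau_1}$ after the block/constant splitting --- i.e.\ verifying that the constant off-diagonal blocks $B_1, B_1^*, B_2$ and the purely multivalued summand $\cK$ interact correctly with the limits $y \to \infty$ defining $\cN$ and $\cB$ in Proposition \ref{pr2.1.1}. The genuinely analytic content sits entirely inside Proposition \ref{pr3.7} and Theorem \ref{th2.12}; what remains is linear-algebraic, but it must be done with attention to domains since $\cN_{\tau_0}$ is only densely (not boundedly) defined in general.
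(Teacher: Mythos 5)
Your proposal is correct and follows essentially the same route as the paper's own proof: the decomposition $\tau=\{\cH'\oplus\cH''\oplus\cK,B_1,B_2,\tau_1\}$ via Corollary \ref{cor2.4}, the identification $S(\wt A_\tau)=A_{\t_0}$ and the triplet $\Pi'$ via Proposition \ref{pr3.3} and Lemma \ref{lem2.7}, the reduction $\wt A_\tau=\wt S_{\tau_1}$ via Lemma \ref{lem3.2}, Proposition \ref{pr3.7} together with Theorem \ref{th2.12} for the forbidden relation, and the transport back through Lemma \ref{lem2.7}(iii) with exactly the paper's identifications $\dom\cN_{\tau_0}=\dom\cN_{\tau_1}\oplus\cH''$, $\cN_{\tau_0}(h\oplus\f)=(\cN_{\tau_1}h-B_1\f)\oplus(-B_1^*h-B_2\f)$ and $\ran\cB_{\tau_1}=\ran\cB_{\tau_0}$. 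Your intermediate phrase that $\cN_{\tau_1}$ is ``the restriction to $\cH'$ of $\cN_{\tau_0}$'' is imprecise (the restriction also has the component $-B_1^*h$ in $\cH''$), but your subsequent block formula is the correct one actually used, and the only omission --- that $\t_c$ is symmetric, immediate from Proposition \ref{pr2.6}(v) since $\CAt$ is symmetric --- is trivial.
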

\begin{proof}
Assume that $\tau=\{\cH'\oplus\cH''\oplus\cK,B_1,B_2,\tau_1\}$ with $\cH' \oplus \cH''=\cH_0$ and $\tau_1\in R_u[\cH']$ (see Corollary \ref{cor2.4} and Remark \ref{rem2.4a}). Moreover, let $\G_0$ and $\G_1$ have the block representations  \eqref{2.20} and let $\t_0\in \C (\cH)$ be given by \eqref{2.20.1}. Then according to Proposition \ref{pr3.3} $S=S(\wt A_\tau)$ admits the representation \eqref{2.21} and by Lemma \ref{lem2.7}, (ii) the equalities \eqref{2.23} define a boundary triplet $\Pi'=\{\cH',\G_0',\G_1'\}$ for $S^*$. Moreover, according to Lemma \ref{lem3.2} $\wt A_\tau=\wt S_{\tau_1}$ (in the triplet $\Pi'$) and by Proposition \ref{pr3.7} there exist a Hilbert space $\gH_r$, a symmetric relation $S_r\in\C (\gH_r)$ and a boundary triplet $\Pi_r=\{\cH',\G_0^r,\G_1^r\}$ for $S_r^*$ such that $\tau_1$ is the Weyl function of $\Pi_r$ and $\CAt=S_{-\cF_r}$ (in the triplet $\Pi'$), where $\cF_r$ is the forbidden relation of $\Pi_r$. It follows from Theorem \ref{th2.12} that
\begin{gather}
\ran \cB_{\tau_1}\subset \mul \cF_r, \qquad \ov{\mul \cF_r} =\ov {\ran \cB_{\tau_1}}\label{3.15}\\
-\cF_r=-\cN_{\tau_1}\dotplus \wh\mul \cF_r=\{\{h,-\cN_{\tau_1}h+h'\}:h\in \dom \cN_{\tau_1}, h'\in\mul\cF_r \},\label{3.16}
\end{gather}
where $\cN_{\tau_1}:\dom \cN_{\tau_1}\to \cH'$ is the operator given by
\begin{gather}
\dom \cN_{\tau_1}=\{h\in\cH': \lim_{y\to \infty}y\im (\tau_1(iy)h,h)<\infty\}\label{3.17}\\
\cN_{\tau_1}h= \lim_{y\to \infty}\tau_1(iy)h, \quad h\in  \dom \cN_{\tau_1}.\label{3.17.0}
\end{gather}
Let $\dom\t_c':=\dom \cN_{\tau_1}\oplus\cH''$ and $\t_c':\dom\t_c'\to \cH_0$ be the operator given by
\begin{gather}\label{3.17.1}
\t_c'=\begin{pmatrix} - \cN_{\tau_1} & B_1\cr B_1^*\up \dom \cN_{\tau_1} & B_2\end{pmatrix}:\dom \cN_{\tau_1}\oplus\cH''\to \cH'\oplus\cH''.
\end{gather}
Then by Lemma \ref{lem2.7} $\CAt=A_{\t_c}$ (in the triplet $\Pi$) with the linear relation $\t_c$ in $\cH$ given by
\begin{gather}
\mul\t_c=\mul\cF_r\oplus\cK\label{3.17.2}\\
\t_c=\t_c'\dotplus \wh\mul\t_c=\{\{h,\t_c'h+h'\}: h\in\dom\t_c',\, h'\in \mul\t_c \}\label{3.17.3}
\end{gather}
It follows from \eqref{2.8} (with $\tau_0(\l)$ in place of $\tau(\l)$) that
\begin{gather*}
y\im (\tau_0(iy)h,h)=y\im (\tau_1(iy) P_{\cH'}h, P_{\cH'}h), \quad h\in\cH_0.
\end{gather*}
Therefore by \eqref{3.17} $\dom \t_c'=\dom \cN_{\tau_0}$. Moreover, combining \eqref{3.17.1}, \eqref{3.17.0} and \eqref{2.9.1.1} for each $h=h'\oplus h''\in\dom\t_c'$ one obtains
\begin{gather*}
\t_c' h=(- \cN_{\tau_1} h' +B_1 h'')\oplus (B_1^* h'+ B_2 h'')=\\
-\lim_{y\to \infty}((\tau_1 (iy) h' -B_1 h'')\oplus (-B_1^* h'- B_2 h''))= -\lim_{y\to \infty}\tau_0 (iy) h=-\cN_{\tau_0} h.
\end{gather*}
This and  \eqref{3.17.3} yield \eqref{3.13}.

Next, by \eqref{2.9.1.1}  $\cB_{\tau_1}=\cB_{\tau_0}P_{\cH'}$ and hence $\ran \cB_{\tau_1}=\ran\cB_{\tau_0}$. Therefore by \eqref{3.15} and \eqref{3.17.2} the relations \eqref{3.13.1} are valid. Finally,  $\t_c$ is symmetric in view Proposition \ref{pr2.6}, (\romannumeral 5).
\end{proof}
\begin{theorem}\label{th3.10}
Assume that $\Pi=\bt$ is a boundary triplet for $A^*$,  $A_0=\ker \G_0$ (that is $A_0$ is a fixed canonical self-adjoint extension of $A$ in the Krein formula \eqref{2.31}), $\tau \in \wt R(\cH)$ and $\tau_0\in R [\cH_0]$ is the operator part of $\tau$ (see \eqref{2.9.3}). Then the following statements are equivalent:

{\rm (i)} $T(\wt A_\tau)$ is closed and $\CAt\subset A_0$;

{\rm (ii)} $\tau \in \RCH$ and $\tau_0$ satisfies
\begin{gather}\label{3.18}
\lim_{y\to\infty} y \im (\tau_0(iy)h,h)=\infty, \quad h\in\cH_0, \;\; h\neq 0.
\end{gather}

If statement {\rm (i)} (or equivalently {\rm (ii)}) is valid then
\begin{gather}\label{3.19}
\ov{\CAt}=\{\wh f\in A^*: \G_0\wh f=0, \, \G_1 \wh f\in \ov{\ran \cB_{\tau_0}}\oplus \cK\}.
\end{gather}
If in addition $\ran \cB_{\tau_0}$ is closed, then $\CAt$ is closed and
\begin{gather}\label{3.19.1}
\CAt=\{\wh f\in A^*: \G_0\wh f=0, \, \G_1 \wh f=
\cB_{\tau_0}h\oplus k \;\;\text{\rm with some}\;\; h\in\cH_0 \;\; {\rm and} \;\; k\in\cK\}.
\end{gather}
\end{theorem}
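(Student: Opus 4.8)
The plan is to reduce everything to the parametrization $\CAt=A_{\t_c}$ provided by Theorem \ref{th3.8}, combined with the identification $A_0=A_{\wh\cH}$, where $\wh\cH=\{0\}\oplus\cH$, so that $A_0=\ker\G_0=A_{\wh\cH}$ (immediate from \eqref{2.11}), and the monotonicity of the correspondence $\t\mapsto A_\t$ of Proposition \ref{pr2.6}, (\romannumeral 2). The latter means $A_{\t_1}\subset A_{\t_2}$ if and only if $\t_1\subset\t_2$; the nontrivial implication follows from surjectivity of $\G=(\G_0,\G_1)^\top$. Throughout I use that, by \eqref{3.13}, $\dom\t_c=\dom\cN_{\tau_0}$, and that $\t_c\subset\wh\cH$ is equivalent to $\dom\t_c=\{0\}$, that is, via \eqref{2.3.1} applied to $\tau_0\in R[\cH_0]$, to condition \eqref{3.18}.

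For (\romannumeral 2)$\Rightarrow$(\romannumeral 1) I would argue as follows. Since $\tau\in\RCH$, Theorem \ref{th3.5}, (\romannumeral 1) gives that $T(\wt A_\tau)$ is closed, and Theorem \ref{th3.8} applies and yields $\CAt=A_{\t_c}$. Condition \eqref{3.18} forces $\dom\cN_{\tau_0}=\{0\}$, hence $\t_c\subset\wh\cH$, and monotonicity gives $\CAt=A_{\t_c}\subset A_{\wh\cH}=A_0$.

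The converse (\romannumeral 1)$\Rightarrow$(\romannumeral 2) contains the one genuine obstacle: in order to invoke Theorem \ref{th3.8} one must first know that $\tau\in\RCH$, which is not among the hypotheses. The key observation resolving this is that $\CAt\subset A_0$ already forces $S(\wt A_\tau)\wh + A_0$ to be closed. Indeed, by the chain \eqref{3.4.1} we have $S(\wt A_\tau)\subset\CAt\subset A_0$, so $S(\wt A_\tau)\wh + A_0=A_0$ is closed; note that this uses only the definitions of $S(\wt A_\tau)$ and $\CAt$, not any information about $\tau$. Together with the hypothesis that $T(\wt A_\tau)$ is closed, Theorem \ref{th3.5}, (\romannumeral 1) then yields $\tau\in\RCH$. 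With this in hand Theorem \ref{th3.8} gives $\CAt=A_{\t_c}$, and running the monotonicity argument backwards turns $\CAt\subset A_0$ into $\t_c\subset\wh\cH$, i.e. $\dom\cN_{\tau_0}=\{0\}$, which is \eqref{3.18}.

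It remains to read off the two formulas under the standing assumption (\romannumeral 1)/(\romannumeral 2). Since $\dom\cN_{\tau_0}=\{0\}$, the relation \eqref{3.13} collapses to $\t_c=\{0\}\oplus\mul\t_c$, while \eqref{3.13.1} gives $\ov{\mul\t_c}=\ov{\ran\cB_{\tau_0}}\oplus\cK$. Hence $\ov{\t_c}=\{0\}\oplus(\ov{\ran\cB_{\tau_0}}\oplus\cK)$, and using $\ov{A_{\t_c}}=A_{\ov{\t_c}}$ from Proposition \ref{pr2.6}, (\romannumeral 4) the defining boundary condition $\{\G_0\wh f,\G_1\wh f\}\in\ov{\t_c}$ reads $\G_0\wh f=0$ and $\G_1\wh f\in\ov{\ran\cB_{\tau_0}}\oplus\cK$; this is \eqref{3.19}. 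If moreover $\ran\cB_{\tau_0}$ is closed, then by the final assertion of Theorem \ref{th3.8} $\mul\t_c=\ran\cB_{\tau_0}\oplus\cK$ is closed, so $\t_c\in\C(\cH)$ and $\CAt=A_{\t_c}$ is closed by Proposition \ref{pr2.6}, (\romannumeral 4); writing $\G_1\wh f\in\ran\cB_{\tau_0}\oplus\cK$ as $\G_1\wh f=\cB_{\tau_0}h\oplus k$ with $h\in\cH_0$, $k\in\cK$ gives \eqref{3.19.1}.
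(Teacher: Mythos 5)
Your proposal is correct and takes essentially the same route as the paper's own proof: both arguments rest on the observation that $\CAt\subset A_0$ together with \eqref{3.4.1} forces $\SAt\wh + A_0=A_0$ to be closed, so that Theorem \ref{th3.5}, (i) yields $\tau\in\RCH$, after which Theorem \ref{th3.8} and the identification $A_0=A_{\{0\}\oplus\cH}$ reduce the inclusion $\CAt\subset A_0$ to $\dom\cN_{\tau_0}=\{0\}$, i.e.\ to \eqref{3.18}. Your derivations of \eqref{3.19} via $\ov{\t_c}=\{0\}\oplus(\ov{\ran\cB_{\tau_0}}\oplus\cK)$ and of \eqref{3.19.1} from the closedness of $\ran\cB_{\tau_0}$ likewise match the paper's.
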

\begin{proof}
Assume statement (i). Then by \eqref{3.4.1} $\SAt\subset A_0$ and hence $\SAt\wh + A_0=A_0$. Therefore $\SAt\wh + A_0$ is closed and by Theorem \ref{th3.5}, (i) $\tau \in \RCH$. This in view of Theorem \ref{th3.8} implies that $\CAt= A_{\t_c}$ with  $\t_c$ given by \eqref{3.13}. It follows from \eqref{3.13} that $\CAt\subset A_0$ if and only if $\dom\cN_{\tau_0}=\{0\}$, which yields \eqref{3.18}. Thus (ii) holds.

Conversely, let (ii) holds. Then by Theorem \ref{th3.5}, (i) $T(\wt A_\tau)$ is closed. Moreover, by  Theorem \ref{th3.8} $\CAt=A_{\t_c}$ with  $\t_c$ of the form  \eqref{3.13} and \eqref{3.18} shows that $\dom\cN_{\tau_0}=\{0\}$. Hence $\t_c= \{0\}\oplus \mul\t_c$ and therefore $\CAt\subset A_0$, which yields statement (i). Moreover, by Proposition  \ref{pr2.6}, (\romannumeral 4) $\ov{\CAt}=A_{\ov\t_c}$, where in view of \eqref{3.13.1} $\ov\t_c=\{0\}\oplus \ov{\mul\t_c}=\{0\}\oplus \cH_c$ with $\cH_c=\ov{\ran\cB_{\tau_0}}\oplus \cK$. This yields \eqref{3.19}. Finally, the last statement of the theorem follows from the previous one.
 \end{proof}
\begin{corollary}\label{cor3.11}
Let the assumptions be the same as in Theorem \ref{th3.10}. Then the following statements are equivalent:

{\rm (i)} $\TAt$ is closed and $\ov  {\CAt}=A_0$;

{\rm (ii)} $\tau \in\RCH$ and $\ker \cB_{\tau_0}=\{0\}$.

If in addition $\ran \cB_{\tau_0}$ is closed, then statement {\rm (ii)} is equivalent to the following one:

{\rm(\romannumeral 1$\,^\prime$)} $\TAt$ is closed and $ {\CAt}=A_0$.
\end{corollary}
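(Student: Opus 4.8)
The plan is to read off the corollary directly from Theorem \ref{th3.10}, whose conclusions \eqref{3.19} and \eqref{3.19.1} already give explicit descriptions of $\ov{\CAt}$ and of $\CAt$ once one knows that $\TAt$ is closed and $\CAt\subset A_0$. The one genuinely new ingredient I would isolate first is the inclusion $\dom\cN_{\tau_0}\subset\ker\cB_{\tau_0}$; this is the step I expect to be the conceptual crux, since it is what allows the single condition $\ker\cB_{\tau_0}=\{0\}$ in (ii) to replace the pair of conditions \eqref{3.18} and $\tau\in\RCH$ appearing in Theorem \ref{th3.10}(ii). To prove the inclusion I would use \eqref{2.3} in the scalar form $(\cB_{\tau_0}h,h)=\lim_{y\to\infty}\tfrac 1 y\im(\tau_0(iy)h,h)$, valid for all $h\in\cH_0$: if $h\in\dom\cN_{\tau_0}$ then by \eqref{2.3.1} the quantity $y\im(\tau_0(iy)h,h)$ stays bounded, so $\tfrac 1 y\im(\tau_0(iy)h,h)\to 0$, giving $(\cB_{\tau_0}h,h)=0$ and hence $\cB_{\tau_0}h=0$ because $\cB_{\tau_0}=\cB_{\tau_0}^*\geq 0$. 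In particular $\ker\cB_{\tau_0}=\{0\}$ forces $\dom\cN_{\tau_0}=\{0\}$, i.e.\ condition \eqref{3.18}.

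For the equivalence (i)$\Leftrightarrow$(ii) I would argue as follows. Since $A_0$ is closed, $\ov{\CAt}=A_0$ entails $\CAt\subset A_0$, so in the direction (i)$\Rightarrow$(ii) Theorem \ref{th3.10} applies and yields $\tau\in\RCH$, the growth condition \eqref{3.18}, and formula \eqref{3.19}. Because $\G=(\G_0,\G_1)^\top$ is surjective, $\G_1$ maps $A_0=\ker\G_0$ onto all of $\cH$; feeding this into \eqref{3.19} and the assumed equality $\ov{\CAt}=A_0$ forces $\ov{\ran\cB_{\tau_0}}\oplus\cK=\cH$, whence $\ov{\ran\cB_{\tau_0}}=\cH_0$ and, by the self-adjointness of $\cB_{\tau_0}$, $\ker\cB_{\tau_0}=\{0\}$. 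Conversely, for (ii)$\Rightarrow$(i) the preparatory inclusion gives \eqref{3.18}, so Theorem \ref{th3.10} provides that $\TAt$ is closed and that \eqref{3.19} holds; since $\ker\cB_{\tau_0}=\{0\}$ yields $\ov{\ran\cB_{\tau_0}}=\cH_0$, the subspace $\ov{\ran\cB_{\tau_0}}\oplus\cK$ equals $\cH_0\oplus\cK=\cH$, the range restriction in \eqref{3.19} becomes vacuous, and \eqref{3.19} collapses to $\ov{\CAt}=\{\wh f\in A^*:\G_0\wh f=0\}=A_0$.

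Finally, under the extra hypothesis that $\ran\cB_{\tau_0}$ is closed I would establish (ii)$\Leftrightarrow$(\romannumeral 1$\,^\prime$). The implication (\romannumeral 1$\,^\prime$)$\Rightarrow$(i) is immediate, as $\CAt=A_0$ is closed, and (i)$\Leftrightarrow$(ii) has just been proved; so it remains to derive (\romannumeral 1$\,^\prime$) from (ii). Here closedness of $\ran\cB_{\tau_0}$ together with $\ker\cB_{\tau_0}=\{0\}$ gives $\ran\cB_{\tau_0}=\ov{\ran\cB_{\tau_0}}=\cH_0$, and the last part of Theorem \ref{th3.10} guarantees that $\CAt$ is itself closed and is described by \eqref{3.19.1}. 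Since $\cB_{\tau_0}h\oplus k$ now runs over $\cH_0\oplus\cK=\cH$ as $h\in\cH_0$ and $k\in\cK$, \eqref{3.19.1} reduces to $\CAt=\{\wh f\in A^*:\G_0\wh f=0\}=A_0$, which is (\romannumeral 1$\,^\prime$). Throughout, the only spectral fact I rely on beyond the bookkeeping with Theorem \ref{th3.10} is the equivalence $\ov{\ran\cB_{\tau_0}}=\cH_0\iff\ker\cB_{\tau_0}=\{0\}$ for the bounded nonnegative operator $\cB_{\tau_0}$.
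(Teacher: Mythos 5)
Your proof is correct, and in the direction (ii) $\Rightarrow$ (i) it takes a genuinely different route from the paper. The paper never derives the growth condition \eqref{3.18} from $\ker\cB_{\tau_0}=\{0\}$: instead it gets closedness of $\TAt$ from Theorem \ref{th3.5}, (i), invokes Theorem \ref{th3.8} to write $\CAt=A_{\t_c}$ with $\t_c$ as in \eqref{3.13}, \eqref{3.13.1}, notes that $\ker\cB_{\tau_0}=\{0\}$ makes $\ov{\mul\t_c}=\cH$, and then exploits the symmetry of $\t_c$ through Lemma \ref{lem2.4b}, (i) (namely $\dom\t_c\subset\cH\ominus\mul\t_c$) to force $\dom\t_c=\{0\}$, hence $\ov{\t_c}=\{0\}\oplus\cH$ and $\ov{\CAt}=A_{\{0\}\oplus\cH}=A_0$ by Proposition \ref{pr2.6}, (iv). You instead isolate the analytic inclusion $\dom\cN_{\tau_0}\subset\ker\cB_{\tau_0}$, correctly proved from \eqref{2.3}: the strong limit there gives $(\cB_{\tau_0}h,h)=\lim_{y\to\infty}\tfrac 1y\im(\tau_0(iy)h,h)$, which vanishes whenever $y\im(\tau_0(iy)h,h)$ stays bounded, and $\cB_{\tau_0}\geq 0$ then yields $\cB_{\tau_0}h=0$. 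With this, $\ker\cB_{\tau_0}=\{0\}$ gives $\dom\cN_{\tau_0}=\{0\}$, i.e.\ \eqref{3.18}, and Theorem \ref{th3.10} can be quoted wholesale, formula \eqref{3.19} collapsing to $A_0$ because $\ov{\ran\cB_{\tau_0}}\oplus\cK=\cH$. Your approach buys an explicit, reusable Nevanlinna fact the paper never states (and which also clarifies why $\ker\cB_{\tau_0}$ governs conditions such as \eqref{3.40} in Theorem \ref{th3.24}), at the price of going beyond the paper's stated asymptotics of $\cB_{\tau_0}$ and $\cN_{\tau_0}$; the paper's argument stays entirely within its boundary-triplet bookkeeping and needs no new analytic input. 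The remaining directions coincide with the paper's up to wording: your (i) $\Rightarrow$ (ii) via surjectivity of $\G_1\up A_0$ is the same mechanism as the paper's appeal to the bijectivity of $\t\mapsto A_\t$ applied to $\ov{\CAt}=A_{\{0\}\oplus\cH_c}$, and your treatment of (i$^\prime$) under closed range of $\cB_{\tau_0}$, using \eqref{3.19.1} and $\ran\cB_{\tau_0}=\cH_0$, matches the paper's.
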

\begin{proof}
Assume that (i) holds. Then by Theorem \ref{th3.10} $\tau \in\RCH$ and $\ov {\CAt}=A_{\{0\}\oplus \cH_c}$ with $\cH_c=\ov{\ran \cB_{\tau_0}}\oplus\cK$. Since $A_0=A_{\{0\}\oplus\cH}$, it follows from $\ov  {\CAt}=A_0$ that $\cH_c=\cH=\cH_0\oplus\cK$. Hence $\ov{\ran \cB_{\tau_0}}=\cH_0$ or, equivalently, $\ker \cB_{\tau_0}=\{0\}$. Thus (ii) is valid.

Conversely, let (ii) holds. Then by Theorem \ref{th3.5}, (i) $\TAt$ is closed. Moreover, by Theorem \ref{th3.8} $\CAt=A_{\t_c}$ with $\t_c$ given by \eqref{3.13} and satisfying \eqref{3.13.1}. Since $\ov{\ran \cB_{\tau_0}}=\cH_0$, it follows from \eqref{3.13.1} that $\ov{\mul\t_c}=\cH$. Hence by Lemma \ref{lem2.4b}, (i) $\ov{\t_c}=\{0\}\oplus\cH$ and in view of Proposition \ref{pr2.6},  (\romannumeral 4) $\ov{\CAt}=A_{\{0\}\oplus\cH}=A_0$. This proves statement (i).

Assume now that $\ran \cB_{\tau_0}$ is closed. Then (i$\,^\prime$) implies (i) and, consequently, (ii). Conversely, let (ii) holds. Then (i) is valid and hence $\CAt=A_{\t_c}$ with $\t_c = \{0\}\oplus \mul\t_c$. Moreover, $\ran \cB_{\tau_0}=\cH_0$ and by Theorem \ref{th3.8} $\mul\t_c=\cH_0\oplus\cK=\cH$. Therefore $\CAt=A_{\{0\}\oplus\cH}=A_0$, which yields  (i$\,^\prime$).
\end{proof}
\begin{corollary}\label{cor3.12}
Assume that $\Pi=\bt$ is  a boundary triplet for $A^*$ and $\tau\in\wt R(\cH)$. Then $\TAt$ is closed and $\CAt = A$ if and only if $\tau\in R_c[\cH], \; \cB_\tau =0$ and
\begin{gather}\label{3.20}
\lim_{y\to\infty} y \im (\tau(iy)h,h)=\infty, \quad h\in\cH, \;\; h\neq 0.
\end{gather}
\end{corollary}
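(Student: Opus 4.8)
The plan is to read this off Theorem \ref{th3.10}, whose hypotheses are almost identical: it characterizes when $\TAt$ is closed and $\CAt\subset A_0$, and it already supplies explicit descriptions \eqref{3.19} and \eqref{3.19.1} of $\ov{\CAt}$ and $\CAt$. Since $A\subset A_0$, the equality $\CAt=A$ is a strengthening of the inclusion $\CAt\subset A_0$, so the only content of the corollary is to isolate the extra conditions on $\tau$ that upgrade ``$\CAt\subset A_0$'' to ``$\CAt=A$''. Throughout I would keep in mind the translation between $\tau$ and its operator part $\tau_0\in R[\cH_0]$ from \eqref{2.9}: if $\tau\in R_c[\cH]$ then its values are operators, so $\cK=\mul\tau(\l)=\{0\}$, $\cH_0=\cH$, $\tau_0=\tau$, $\cB_{\tau_0}=\cB_\tau$, and condition \eqref{3.18} for $\tau_0$ is precisely \eqref{3.20} for $\tau$.

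For the forward implication I would start from $\CAt=A$. Because $A\subset A_0$ this gives $\CAt\subset A_0$, so together with the hypothesis that $\TAt$ is closed, statement (i) of Theorem \ref{th3.10} holds; hence $\tau\in\RCH$, the operator part $\tau_0$ satisfies \eqref{3.18}, and the description $\ov{\CAt}=\{\wh f\in A^*:\G_0\wh f=0,\ \G_1\wh f\in\ov{\ran\cB_{\tau_0}}\oplus\cK\}$ from \eqref{3.19} is available. Now $A$ is closed, so $\ov{\CAt}=\CAt=A=\ker\G$; comparing this with \eqref{3.19} through the bijectivity in Proposition \ref{pr2.6}, (ii) forces $\ov{\ran\cB_{\tau_0}}\oplus\cK=\{0\}$. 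This is the crux: it yields $\cK=\{0\}$ and $\cB_{\tau_0}=0$ simultaneously. From $\cK=\{0\}$ I get $\tau=\tau_0\in R_c[\cH]$, whence $\cB_\tau=\cB_{\tau_0}=0$, and \eqref{3.18} becomes \eqref{3.20}, giving all three required conditions.

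For the converse I would assume $\tau\in R_c[\cH]$, $\cB_\tau=0$ and \eqref{3.20}. Then $\cK=\{0\}$, $\tau_0=\tau$, $\cB_{\tau_0}=0$, and \eqref{3.20} is exactly \eqref{3.18}, so statement (ii) of Theorem \ref{th3.10} holds; therefore $\TAt$ is closed and $\CAt\subset A_0$. Since $\ran\cB_{\tau_0}=\{0\}$ is trivially closed, the last part of Theorem \ref{th3.10} applies and $\CAt$ is given by \eqref{3.19.1}; substituting $\cB_{\tau_0}=0$ and $\cK=\{0\}$ collapses \eqref{3.19.1} to $\{\wh f\in A^*:\G_0\wh f=0,\ \G_1\wh f=0\}=\ker\G=A$. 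This closes the equivalence.

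I do not anticipate a serious analytic obstacle, since the heavy lifting (the passage through the exit-space model, the boundary triplet $\Pi'$ for $S(\wt A_\tau)^*$, and the identification of $\t_c$ via the forbidden relation) is already carried out in Theorems \ref{th3.8} and \ref{th3.10}. The only delicate point is the bookkeeping of the operator/multivalued decomposition: one must notice that demanding $\CAt=A$ rather than merely $\CAt\subset A_0$ is exactly what kills $\mul\t_c$, and that killing $\mul\t_c$ eliminates both $\cK$ and $\ran\cB_{\tau_0}$ at once through \eqref{3.13.1}. Keeping $\cB_\tau$ aligned with $\cB_{\tau_0}$, and \eqref{3.18} with \eqref{3.20}, under the reduction $\cK=\{0\}$ is the whole subtlety.
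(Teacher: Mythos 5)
Your proof is correct and takes essentially the same route as the paper's: both directions are read off from Theorem \ref{th3.10}, with the forward implication using $\CAt=A\subset A_0$ and the bijectivity of $\t\mapsto A_\t$ (Proposition \ref{pr2.6}) to force $\ov{\ran\cB_{\tau_0}}\oplus\cK=\{0\}$, hence $\cK=\{0\}$, $\tau=\tau_0\in R_c[\cH]$, $\cB_\tau=0$ and \eqref{3.20}, exactly as in the paper. The only cosmetic difference is in the converse, where you invoke \eqref{3.19.1} (legitimate, since $\ran\cB_{\tau_0}=\{0\}$ is closed) while the paper uses \eqref{3.19} together with $A\subset\CAt\subset\ov{\CAt}=\ker\G=A$; these are equivalent.
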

\begin{proof}
Let $\TAt$ be closed and $\CAt = A$. Then by Theorem \ref{th3.10} $\tau\in\RCH$, the operator-function $\tau_0\in R_c[\cH_0]$ in decomposition \eqref{2.9} of $\tau$ satisfies \eqref{3.18}  and $\ov{\ran \cB_{\tau_0}}\oplus\cK=\{0\}$. Hence $\cK=\{0\}$ and, consequently, $\cH_0=\cH$ and $\tau=\tau_0$. This implies that $\tau\in R_c[\cH]$  and \eqref{3.20} holds. Moreover, $\ran \cB_{\tau_0}=\{0\}$ and therefore $\cB_\tau=\cB_{\tau_0}=0$.

Conversely, let $\tau\in R_c[\cH],\; \cB_\tau =0$ and \eqref{3.20} is satisfied. Then by Theorem \ref{th3.10} $\TAt$ is closed and $\ov{\CAt}$ is given by \eqref{3.19} with $\ran\cB_{\tau_0}=\ran\cB_{\tau}=\{0\}$ and $\cK=\{0\}$. Hence $\CAt=\ov{\CAt}=\ker \G$ and by Proposition \ref{pr2.6}, (i) $\CAt=A$.
\end{proof}
\begin{remark}\label{rem3.13}
Assume that $A$ is a closed densely defined symmetric operator in $\gH$. Then each exit space extension $\wt A=\wt A^*$ of $A$ is a densely defined operator and according to M.A.~Naimark (see e.g. \cite{AG}) an extension $\wt A$ of $A$ is said to be of the second kind if $\dom \wt A \cap\gH =\dom A$ or equivalently if $C (\wt A)=A$. Clearly, in the case $\ov{\dom A}=\gH$ Corollary \ref{cor3.12} gives a parametrisation of  all extensions $\wt A$ of the second kind satisfying $\ov{T(\wt A)}=T(\wt A)$. Note that this result follows from the results of \cite{DM09}, where a parametrisation of all extensions $\wt A \supset A$ of the second kind of a densely defined $A$ was obtained in terms of the parameter $\tau$ from the Krein formula \eqref{2.31}. Observe also that a somewhat  other parametrization of the second kind extensions can be found  in \cite{Sht66,Zag13}.
\end{remark}
\subsection{Exit space self-adjoint extensions with a self-adjoint compression}
In the following proposition we provide a sufficient condition on the parameter $\tau$ for self-adjointness of the compression $\CAt$.
\begin{proposition}\label{pr3.14}
Assume that $\Pi=\bt$ is a boundary triplet for $A^*$, $\tau \in\RCH$ and $\tau_0\in R_c[\cH_0]$ is the operator part of $\tau$ (see \eqref{2.9}). If  $\ran\cB_{\tau_0}$ is closed and
\begin{gather}\label{3.20.1}
\lim_{y\to\infty} y\im (\tau_0(iy)h,h)<\infty, \quad h\in \ker\cB_{\tau_0},
\end{gather}
then $\CAt$ is self-adjoint.
\end{proposition}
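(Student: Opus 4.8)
The plan is to reduce self-adjointness of $\CAt$ to self-adjointness of the associated parameter $\t_c$, and then to verify the latter directly via Lemma \ref{lem2.4b}, (ii). Since $\tau\in\RCH$, Theorem \ref{th3.8} applies and yields $\CAt=A_{\t_c}$ (in the triplet $\Pi$) with the symmetric relation $\t_c$ of \eqref{3.13}. Because $\ran\cB_{\tau_0}$ is assumed closed, Theorem \ref{th3.8} moreover gives the explicit form \eqref{3.14} of $\t_c$ together with the equality $\mul\t_c=\ran\cB_{\tau_0}\oplus\cK$. By Proposition \ref{pr2.6}, (v) the extension $A_{\t_c}$ is self-adjoint if and only if $\t_c$ is self-adjoint, so it suffices to prove $\t_c^*=\t_c$.

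To this end I would apply Lemma \ref{lem2.4b}, (ii) to the symmetric relation $\t_c$; its hypotheses are that $\ov{\mul\t_c}=\mul\t_c$ and that $\cH\ominus\mul\t_c\subset\dom\t_c$. The first is immediate: $\ran\cB_{\tau_0}$ is closed by assumption and $\cK$ is a closed subspace, whence $\mul\t_c=\ran\cB_{\tau_0}\oplus\cK$ is closed. For the second I would first identify the orthogonal complement $\cH\ominus\mul\t_c$. Using the decomposition $\cH=\cH_0\oplus\cK$ from \eqref{2.9} and the fact that $\ran\cB_{\tau_0}\subset\cH_0$, one obtains $\cH\ominus\mul\t_c=\cH_0\ominus\ran\cB_{\tau_0}$. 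Since $\cB_{\tau_0}=\cB_{\tau_0}^*\geq 0$ (Proposition \ref{pr2.1.1}) has closed range, $\ran\cB_{\tau_0}=\ov{\ran\cB_{\tau_0}}$ and hence $\cH_0\ominus\ran\cB_{\tau_0}=(\ran\cB_{\tau_0})^\perp=\ker\cB_{\tau_0}$. Thus $\cH\ominus\mul\t_c=\ker\cB_{\tau_0}$.

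It then remains to check $\ker\cB_{\tau_0}\subset\dom\t_c$. By Theorem \ref{th3.8} one has $\dom\t_c=\dom\cN_{\tau_0}$, and by \eqref{2.3.1} applied to $\tau_0$, $\dom\cN_{\tau_0}=\{h\in\cH_0:\lim_{y\to\infty} y\im(\tau_0(iy)h,h)<\infty\}$. Therefore the inclusion $\ker\cB_{\tau_0}\subset\dom\cN_{\tau_0}=\dom\t_c$ is exactly the standing hypothesis \eqref{3.20.1}. With both conditions of Lemma \ref{lem2.4b}, (ii) verified, that lemma gives $\t_c^*=\t_c$, and Proposition \ref{pr2.6}, (v) then yields self-adjointness of $\CAt=A_{\t_c}$.

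The step I expect to require the most care is the identification $\cH\ominus\mul\t_c=\ker\cB_{\tau_0}$: this is the only place (beyond securing the explicit description of $\t_c$) where the closed-range assumption on $\cB_{\tau_0}$ is genuinely used, and it hinges on $\cB_{\tau_0}$ being bounded, self-adjoint and nonnegative so that $(\ran\cB_{\tau_0})^\perp=\ker\cB_{\tau_0}$ within $\cH_0$. Everything else is a direct bookkeeping of the decompositions $\cH=\cH_0\oplus\cK$ and $\mul\t_c=\ran\cB_{\tau_0}\oplus\cK$.
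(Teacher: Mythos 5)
Your proposal is correct and follows essentially the same route as the paper's own proof: both invoke Theorem \ref{th3.8} to get $\CAt=A_{\t_c}$ with $\mul\t_c=\ran\cB_{\tau_0}\oplus\cK$ closed, identify $\cH\ominus\mul\t_c=\ker\cB_{\tau_0}\subset\dom\cN_{\tau_0}=\dom\t_c$ via \eqref{3.20.1}, and conclude with Lemma \ref{lem2.4b}, (ii) and Proposition \ref{pr2.6}, (\romannumeral 5). Your only addition is spelling out why $(\ran\cB_{\tau_0})^\perp=\ker\cB_{\tau_0}$ in $\cH_0$, which the paper leaves implicit.
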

\begin{proof}
Assume that $\ran\cB_{\tau_0}$ is closed and \eqref{3.20.1} holds. Then according to Theorem \ref{th3.8} $\CAt=A_{\t_c}$ with  a symmetric linear relation $\t_c$ in $\cH$ given by \eqref{3.13} and satisfying $\ov{\mul \t_c}=\mul \t_c= \ran \cB_{\tau_0}\oplus\cK$. Moreover, by \eqref{3.20.1} $\ker \cB_{\tau_0}\subset \dom \cN_{\tau_0}=\dom\t_c$. Therefore
\begin{gather*}
\cH\ominus\mul \t_c=\cH_0\ominus \ran \cB_{\tau_0}=\ker \cB_{\tau_0}\subset \dom\t_c
\end{gather*}
and by Lemma \ref{lem2.4b}, (ii) $\t_c=\t_c^*$. Now the required statement follows from Proposition \ref{pr2.6}, ~(\romannumeral 5).
\end{proof}
\begin{corollary}\label{cor3.15}
Let the assumptions be the same as in Proposition \ref{pr3.14}. If $\ran\cB_{\tau_0}$ is closed and there is a compact interval $[a,b]\subset \bR$ such that $\tau_0$ admits a holomorphic continuation onto $\bR\setminus [a,b]$, then $\CAt$ is self-adjoint.
\end{corollary}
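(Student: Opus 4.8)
The plan is to reduce everything to Proposition~\ref{pr3.14}. Since $\ran\cB_{\tau_0}$ is assumed closed, all that remains is to verify the growth condition \eqref{3.20.1}, namely that $\lim_{y\to\infty}y\im(\tau_0(iy)h,h)<\infty$ for every $h\in\ker\cB_{\tau_0}$. To this end I would exploit the integral (Nevanlinna) representation of the operator function $\tau_0\in R[\cH_0]$,
\begin{gather*}
\tau_0(\l)=\cA+\cB_{\tau_0}\l+\int_\bR\Bigl(\frac 1{t-\l}-\frac t{1+t^2}\Bigr)\,d\Sigma(t),\quad \l\in\CR,
\end{gather*}
where $\cA=\cA^*\in\B(\cH_0)$, $\cB_{\tau_0}\geq 0$ is the operator furnished by Proposition~\ref{pr2.1.1}, and $\Sigma(\cd)$ is a nondecreasing $\B(\cH_0)$-valued distribution function subject to the normalization $\int_\bR(1+t^2)^{-1}\,d(\Sigma(t)h,h)<\infty$.

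Next I would pass to the imaginary part along the imaginary axis. As $\cA$ contributes nothing and $-t(1+t^2)^{-1}$ is real, a direct computation gives
\begin{gather*}
y\im(\tau_0(iy)h,h)=y^2(\cB_{\tau_0}h,h)+\int_\bR\frac{y^2}{t^2+y^2}\,d(\Sigma(t)h,h),\quad h\in\cH_0.
\end{gather*}
For $h\in\ker\cB_{\tau_0}$ the first summand vanishes, since $\cB_{\tau_0}\geq 0$ forces $(\cB_{\tau_0}h,h)=0$; in the second summand the integrand increases monotonically to $1$ as $y\to\infty$. Hence by monotone convergence $\lim_{y\to\infty}y\im(\tau_0(iy)h,h)=\int_\bR d(\Sigma(t)h,h)$, so the problem reduces to showing that the scalar measure $(\Sigma(\cd)h,h)$ has finite total mass for each $h\in\ker\cB_{\tau_0}$.

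This is the point where the holomorphic continuation is used, and I expect it to be the crux of the argument. Because $\tau_0$ continues holomorphically across every real point outside $[a,b]$, and because the Nevanlinna symmetry $\tau_0^*(\l)=\tau_0(\ov\l)$ is preserved under analytic continuation, the continued function assumes self-adjoint values on $\bR\setminus[a,b]$; consequently its boundary values satisfy $\im(\tau_0(t)h,h)=0$ there. The Stieltjes inversion formula then shows that $\Sigma$ carries no mass on $\bR\setminus[a,b]$, i.e. $\Sigma$ is supported in the compact interval $[a,b]$. On $[a,b]$ the factor $(1+t^2)^{-1}$ is bounded below by a positive constant $c$, so the normalization yields $c\int_{[a,b]}d(\Sigma(t)h,h)\leq\int_\bR(1+t^2)^{-1}\,d(\Sigma(t)h,h)<\infty$, whence the total mass is finite for every $h$, in particular for $h\in\ker\cB_{\tau_0}$.

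Having thus verified \eqref{3.20.1}, I would simply invoke Proposition~\ref{pr3.14} to conclude that $\CAt$ is self-adjoint. The only genuinely nontrivial step is the support statement $\supp\Sigma\subset[a,b]$; everything else is a routine manipulation of the Nevanlinna representation and a monotone-convergence estimate.
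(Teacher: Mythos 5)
Your proof is correct, but it takes a genuinely different route from the paper's. The paper argues directly from holomorphy at infinity: since $\tau_0$ is holomorphic in $\bC\setminus[a,b]$ and, being a Nevanlinna function, grows at most linearly there, it admits a Laurent expansion $\tau_0(\l)=C_0+\l\cB_{\tau_0}+\sum_{k=1}^{\infty}\l^{-k}C_k$ with self-adjoint coefficients $C_k\in\B(\cH_0)$, so that for $h\in\ker\cB_{\tau_0}$ a term-by-term computation gives $\im(\tau_0(iy)h,h)=\sum_{m=1}^{\infty}\frac{(-1)^m}{y^{2m-1}}(C_{2m-1}h,h)$ and hence $\lim_{y\to\infty}y\im(\tau_0(iy)h,h)=-(C_1h,h)<\infty$; condition \eqref{3.20.1} then holds and Proposition \ref{pr3.14} is invoked, exactly as you do at the end. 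You replace the Laurent expansion by the Nevanlinna integral representation together with Stieltjes inversion: the continuation across $\bR\setminus[a,b]$ forces the representing measure $\Sigma$ to be supported in $[a,b]$ (your symmetry argument, that $\tau_0^*(\l)=\tau_0(\ov\l)$ persists under continuation and yields self-adjoint boundary values, is the standard one and is sound), and compact support combined with the normalization $\int_\bR(1+t^2)^{-1}\,d(\Sigma(t)h,h)<\infty$ gives finite total mass, whence by monotone convergence $\lim_{y\to\infty}y\im(\tau_0(iy)h,h)=\int_\bR d(\Sigma(t)h,h)<\infty$ on $\ker\cB_{\tau_0}$. The paper's route is the more elementary of the two: it is a pure power-series manipulation with no measure theory. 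Yours buys something in exchange: it identifies the limit in \eqref{3.20.1} as the total mass of the representing measure (so, in the notation of Proposition \ref{pr2.1.1}, one sees $\ker\cB_{\tau_0}\subset\dom\cN_{\tau_0}$ together with an explicit formula for the limit), and it makes transparent exactly why the holomorphy hypothesis matters, namely that it confines the spectral measure to a compact set; note also that the two computations are consistent, since ${\rm supp}\,\Sigma\subset[a,b]$ gives $C_1=-\int_{[a,b]}d\Sigma(t)$ up to sign conventions. Both proofs reduce the corollary to verifying \eqref{3.20.1} and citing Proposition \ref{pr3.14}.
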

\begin{proof}
If the function $\tau_0$ is holomorphic in $\bC\setminus [a,b]$, then it admits the Laurent expansion
\begin{gather*}
\tau_0(\l)=C_0 +\l\cB_{\tau_0}+\sum_{k=1}^{\infty}\frac 1 {\l^k}C_k,\quad |\l|> R> 0
\end{gather*}
with operator-valued coefficients  $C_k=C_k^*\in\B (\cH_0), \;k=0,1,2, \dots$. Therefore
\begin{gather*}
\im (\tau_0(iy)h,h)=\sum_{m=1}^{\infty} \frac{(-1)^m}{y^{2m-1}} (C_{2m-1}h,h), \quad y\in\bR,\;\; h\in \ker \cB_{\tau_0}
\end{gather*}
and,consequently,
\begin{gather*}
\lim_{y\to\infty}y\im (\tau_0(iy)h,h)=-(C_1h,h)<\infty, \quad  h\in \ker \cB_{\tau_0}.
\end{gather*}
Thus \eqref{3.20.1} holds and the required statement follows from  Proposition \ref{pr3.14}.
\end{proof}
In the following theorem we describe exit space self-adjoint extensions $\wt A$ of $A$ such that the compression $C(\wt A)$  of $\wt A$ is self-adjoint and transversal with  $A_0$.
\begin{theorem}\label{th3.16}
Assume the conditions of Theorem \ref{th3.10}. Then the following statements are equivalent:

{\rm (i)} $\TAt$ is closed, $\CAt^*=\CAt$ and the extensions $\CAt$ and $A_0$ are transversal;

{\rm (ii)} $\tau\in R_c[\cH]$ and
\begin{gather}\label{3.21}
\lim_{y\to\infty} y\im (\tau(iy)h,h)<\infty, \quad h\in \cH.
\end{gather}
Moreover, if {\rm (i)}  (equivalently {\rm (ii)}) is satisfied, then
\begin{gather}\label{3.22}
\CAt=A_{-\cN_\tau}=\{\wh f\in A^*: \G_1\wh f = -\cN_\tau \G_0\wh f \},
\end{gather}
where $\cN_\tau\in\B(\cH)$ is the operator given by
\begin{gather}\label{3.22.1}
\cN_\tau=s \text{-} \lim_{y\to \infty} \tau(iy).
\end{gather}
\end{theorem}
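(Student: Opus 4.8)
The plan is to prove the equivalence (i)$\iff$(ii) together with the concluding formula by treating the two implications separately, using throughout the parametrization $\CAt=A_{\t_c}$ supplied by Theorem \ref{th3.8} and the bijectivity of $\t\mapsto A_\t$ from Proposition \ref{pr2.6}, (ii).

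For the implication (ii)$\Rightarrow$(i) I would first note that $\tau\in R_c[\cH]$ forces $\cK=\{0\}$, $\cH_0=\cH$ and $\tau_0=\tau$, so $\tau\in\RCH$. Next I would show that \eqref{3.21} makes $\cB_\tau=0$: since $\cB_\tau=\cB_\tau^*\geq 0$ one has $(\cB_\tau h,h)=\lim_{y\to\infty}\tfrac1y\im(\tau(iy)h,h)$ (cf. Proposition \ref{pr2.1.1}), and the bound $y\im(\tau(iy)h,h)<\infty$ sends this limit to $0$ for every $h$, whence $\cB_\tau=0$. Now $\ran\cB_\tau=\{0\}$ is closed, so Theorem \ref{th3.8} gives $\CAt=A_{\t_c}$ with $\mul\t_c=\ran\cB_\tau\oplus\cK=\{0\}$ and $\t_c=-\cN_\tau$, an operator with $\dom\t_c=\dom\cN_\tau=\cH$ by \eqref{2.3.1} and \eqref{3.21}. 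Being symmetric (Theorem \ref{th3.8}) and everywhere defined, $\t_c$ is bounded and self-adjoint, i.e. $\t_c=\t_c^*\in\B(\cH)$ (Lemma \ref{lem2.4b}, (ii) together with the closed graph theorem). Then Proposition \ref{pr2.6}, (v) and (vii) yield that $\CAt$ is self-adjoint and transversal with $A_0$, while $\TAt$ is closed by Theorem \ref{th3.5}, (i); and since $\cN_\tau=-\t_c\in\B(\cH)$, the representation $\CAt=A_{-\cN_\tau}=\{\wh f\in A^*:\G_1\wh f=-\cN_\tau\G_0\wh f\}$ of \eqref{3.22}, \eqref{3.22.1} follows at once.

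For the converse (i)$\Rightarrow$(ii) the self-adjointness and transversality of $\CAt$ with $A_0$ give, via Proposition \ref{pr2.6}, (v) and (vii), that $\CAt=A_\t$ with $\t=\t^*\in\B(\cH)$. Writing $\SAt=A_{\t_0}$ with $\t_0\in\C(\cH)$ closed and symmetric (Proposition \ref{pr2.6}), the inclusion $\SAt\subset\CAt$ from \eqref{3.4.1} translates into $\t_0\subset\t$; boundedness of $\t$ then forces $\mul\t_0\subset\mul\t=\{0\}$ (so $\t_0$ is an operator), and, because the graph of $\t_0$ is closed while $\t$ is continuous, $\dom\t_0$ is closed, i.e. $\t_0\in\B(\dom\t_0,\cH)$. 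I expect this to be the main obstacle: it is exactly the step that converts the purely geometric hypotheses on $\CAt$ into the closedness needed to reach $\tau\in\RCH$. Indeed, by Proposition \ref{pr2.7.2} the closedness of $\dom\t_0$ gives $\ov{\SAt\wh + A_0}=\SAt\wh + A_0$, and since $\TAt$ is closed by hypothesis, Theorem \ref{th3.5}, (i) yields $\tau\in\RCH$; moreover $\mul\t_0=\{0\}$ means $\cK=\{0\}$, so in fact $\tau\in R_c[\cH]$ (cf. Corollary \ref{cor3.4}).

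It then remains to recover \eqref{3.21}. Having $\tau\in R_c[\cH]$, I would apply Theorem \ref{th3.8} to write $\CAt=A_{\t_c}$ with $\t_c=-\cN_\tau\dotplus\wh\mul\t_c$, and by the bijectivity of $\t\mapsto A_\t$ identify this $\t_c$ with the bounded operator $\t$ found above. Hence $\mul\t_c=\{0\}$, $\t_c=-\cN_\tau$ and $\dom\cN_\tau=\dom\t_c=\cH$, which by \eqref{2.3.1} is precisely \eqref{3.21}; this establishes (ii). The same identification gives $\cN_\tau=-\t\in\B(\cH)$, whence the final formula \eqref{3.22} with $\cN_\tau=s\text{-}\lim_{y\to\infty}\tau(iy)$ as in \eqref{3.22.1}, completing the proof.
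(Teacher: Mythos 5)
Your proposal is correct, and its overall architecture coincides with the paper's: both directions rest on the parametrization $\CAt=A_{\t_c}$ from Theorem \ref{th3.8} combined with Proposition \ref{pr2.6}, (\romannumeral 5) and (\romannumeral 7), and on Theorem \ref{th3.5} for the closedness of $\TAt$ and the membership $\tau\in\RCH$. The only genuine divergence is local, in the step you correctly flagged as the main obstacle of (i)$\Rightarrow$(ii): the paper gets the closedness of $\SAt\wh + A_0$ in one stroke by applying Lemma \ref{lem2.7.1} geometrically, with $L_1'=\CAt$ playing the role of the closed complement of $A_0$ containing $\SAt$ (and then invokes Theorem \ref{th3.5}, (ii) to obtain $\tau\in R_c[\cH]$ directly, since $\SAt\cap A_0\subset \CAt\cap A_0=A$), whereas you work on the parameter side: $\CAt=A_\t$ with $\t=\t^*\in\B(\cH)$, $\SAt=A_{\t_0}$ with $\t_0\subset\t$, so $\mul\t_0=\{0\}$ and $\dom\t_0$ is closed because $\t_0$ is a closed restriction of a continuous operator; Proposition \ref{pr2.7.2} then yields the closedness of $\SAt\wh+A_0$ and Corollary \ref{cor3.4} yields $\tau\in R_c[\cH]$. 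The two routes are essentially equivalent in content (Proposition \ref{pr2.7.2} is itself proved via Lemma \ref{lem2.7.1}), your version being slightly longer but making the mechanism explicit at the level of boundary parameters. Likewise, in (ii)$\Rightarrow$(i) you first prove $\cB_\tau=0$ from \eqref{3.21} to conclude $\mul\t_c=\{0\}$ via the closed-range clause of Theorem \ref{th3.8}, while the paper gets $\mul\t_c=\{0\}$ more quickly from $\dom\t_c=\cH$ and symmetry (Lemma \ref{lem2.4b}, (i)); both arguments are sound, and yours has the small bonus of recording explicitly that \eqref{3.21} forces $\cB_\tau=0$.
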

\begin{proof}
Let statement (i) holds. Since $\SAt\subset \CAt$, it follows from Lemma \ref{lem2.7.1} that the linear  relation $\SAt\wh + A_0$ is closed. Moreover, by \eqref{3.4.1} $\SAt\cap A_0=A$. Therefore by Theorem \ref{th3.5}, (ii) $\tau\in R_c[\cH]$. Next, according to Theorem \ref{th3.8} $\CAt=A_{\t_c}$, where $\t_c$ is given by \eqref{3.13} with $\tau_0=\tau$, and by Proposition \ref{pr2.6}, (\romannumeral 7) $\t_c\in\B (\cH)$. Therefore by \eqref{3.13} $\dom\cN_\tau=\dom\t_c=\cH$, which in view of \eqref{2.3.1} yields \eqref{3.21}. Thus statement (ii) is valid. Moreover, since $\mul\t_c=\{0\}$, it follows from \eqref{3.13} that $\t_c=-\cN_\tau$ and in view of definition    \eqref{2.3.2} of $\cN_\tau$ the equality \eqref{3.22.1} holds. This yields \eqref{3.22}.

Conversely, assume (ii). Then by Theorem \ref{th3.5} $\TAt$ is closed. Moreover, by Theorem \ref{th3.8} $\CAt=A_{\t_c}$, where $\t_c$ is given by \eqref{3.13} with $\tau_0=\tau$ and \eqref{3.21} yields $\dom\t_c=\dom\cN_\tau=\cH$. Since $\t_c$ is symmetric, this implies that $\t_c=\t_c^*\in \B(\cH)$. Therefore by Proposition \ref{pr2.6}, (\romannumeral 5) and (\romannumeral 7) $\CAt^*=\CAt$ and the extensions $\CAt$ and $A_0$ are transversal.
\end{proof}
\subsection{Finite-codimensional exit space extensions}
Let $\wt\gH\supset\gH$ be a Hilbert space, let $\gH_r=\wt\gH\ominus \gH$ and let $\wt A=\wt A^*\in \C (\wt\gH)$ be an  exit space extension of $A$. Such an  extension is called finite-codimensional, if $n_r:=\dim \gH_r<\infty$.

The following proposition is well known (see e.g. \cite{DajLan18}).
\begin{proposition}\label{pr3.17}
Let $A\in \C (\gH)$ be a symmetric linear relation  with finite deficiency indices $n_+(A)=n_-(A)=:d<\infty$ and let $\Pi=\bt$ be a boundary triplet for $A^*$ (hence $\dim \cH=d<\infty$). Moreover, let $\tau \in\RH$ and let $\tau_0\in R [\cH_0]$ be the operator part of $\tau$ (see \eqref{2.9}). Then $\wt A_\tau$ is finite-codimensional if and only if
\begin{gather}\label{3.23.1}
\tau_0(\l)=\cA+\l\cB + \sum_{j=1}^l \frac 1 {\a_j-\l}\cA_j, \quad \l\in\CR
\end{gather}
where
\begin{gather}\label{3.23.2}
\a_j\in\bR, \;\;\cA,\cB,\cA_j\in\B(\cH_0), \;\; \cA=\cA^*, \;\cB\geq 0, \; \cA_j\geq 0,\; \cA_j\neq 0,\;\; j=1,2,\dots, l.
\end{gather}
Moreover,
\begin{gather}\label{3.24}
n_r=\dim\ran \cB + \sum_{j=1}^l  \dim\ran \cA_j.
\end{gather}
\end{proposition}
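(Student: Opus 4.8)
The plan is to identify the exit dimension $n_r=\dim\gH_r$ with the state-space dimension of a minimal self-adjoint realization of the Weyl function $\tau_0$, and then to read off both the rational form and the rank count \eqref{3.24} from that realization. First I would set up the same reduction as in the proof of Theorem~\ref{th3.8}. Since $\dim\cH=d<\infty$ one has $\RH=\RCH$, so by Corollary~\ref{cor2.4} we may write $\tau=\{\cH'\oplus\cH''\oplus\cK,B_1,B_2,\tau_1\}$ with $\tau_1\in R_u[\cH']$; then $S=S(\wt A_\tau)=A_{\t_0}$ by Proposition~\ref{pr3.3}, the collection $\Pi'=\{\cH',\G_0',\G_1'\}$ of Lemma~\ref{lem2.7} is a boundary triplet for $S^*$, and $\wt A_\tau=\wt S_{\tau_1}$ in $\Pi'$ by Lemma~\ref{lem3.2}. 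Applying Proposition~\ref{pr3.7} to $\tau_1\in R_u[\cH']$ yields a Hilbert space $\gH_r$, a symmetric relation $S_r\in\C(\gH_r)$ and a boundary triplet $\Pi_r=\{\cH',\G_0^r,\G_1^r\}$ for $S_r^*$ whose Weyl function is $\tau_1$, and $\gH_r$ is exactly the exit space of $\wt A_\tau$ over $\gH$ (it is the exit space of $\wt S_{\tau_1}$ and $S\subset\gH^2$). Moreover $(S_r,\Pi_r)$ is simple, because $\wt A_\tau$ is a minimal exit-space extension. Hence $n_r=\dim\gH_r$ equals the state-space dimension of a simple realization of $\tau_1$, and the whole statement reduces to computing this dimension.

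For the ``if'' direction I would realize the rational parameter explicitly. All non-constant behaviour of $\tau_0$ sits in the $\cH'$-block, since $\ran\cB_{\tau_0}=\ran\cB_{\tau_1}\subset\cH'$ and the residues $\cA_j$ lie in the same block; thus $\tau_1(\l)=\cA'+\l\cB_{\tau_1}+\sum_{j=1}^l\frac1{\a_j-\l}\cA_j'$ is rational with $\cB_{\tau_1}\ge 0$ and $\cA_j'\ge 0$ the compressions of $\cB$ and $\cA_j$ to $\cH'$, and $\cB=\cB_{\tau_0}$ because $\frac1{iy}\tau_0(iy)\to\cB$. I would build the state space $\gH_r:=\ran\cB_{\tau_1}\oplus\bigoplus_{j=1}^l\ran\cA_j'$ by modelling each finite pole $\a_j$ through multiplication by $\a_j$ on a copy of $\ran\cA_j'$, the linear term $\l\cB_{\tau_1}$ through a purely multivalued relation part modelling a pole at infinity on a copy of $\ran\cB_{\tau_1}$, and the self-adjoint constant $\cA'$ through a change of boundary triplet that does not alter $\gH_r$. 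Positivity $\cB\ge 0$, $\cA_j\ge 0$, $\cA_j\neq 0$ together with distinctness of the $\a_j$ make this realization simple, hence it coincides with the one from Proposition~\ref{pr3.7}. Therefore $\gH_r$ is finite-dimensional, $\wt A_\tau$ is finite-codimensional, and $n_r=\dim\ran\cB_{\tau_0}+\sum_{j=1}^l\dim\ran\cA_j$.

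For the ``only if'' direction I would use the spectral decomposition of the basic extension. If $\wt A_\tau$ is finite-codimensional then $\gH_r$ is finite-dimensional, so $S_{r,0}:=\ker\G_0^r$ is a self-adjoint relation in a finite-dimensional space, with finitely many eigenvalues $\a_1,\dots,\a_l$, orthogonal spectral projections $P_{\a_j}$ and a multivalued part $\mul S_{r,0}$. Inserting the resolvent identity \eqref{2.25} and the relation \eqref{2.25.1} into the $Q$-function representation of $\tau_1$ (Remark~\ref{rem2.10}) gives $\tau_1(\l)=\cA'+\l\cB'+\sum_{j=1}^l\frac1{\a_j-\l}\cA_j'$ with $\cA_j'=\g_r^*(i)P_{\a_j}\g_r(i)\ge 0$ and $\cB'\ge 0$ produced by $\mul S_{r,0}$, where $\g_r$ is the $\g$-field of $\Pi_r$. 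Transporting the constant blocks $B_1,B_2$ back through Lemma~\ref{lem2.7} turns this into the asserted form \eqref{3.23.1}--\eqref{3.23.2} for $\tau_0$. Simplicity of $(S_r,\Pi_r)$ forces $\g_r(i)$ to be injective with range matching each eigenspace, so $\dim\ran\cA_j=\dim\ran\cA_j'$ is the multiplicity of $\a_j$ and $\dim\ran\cB_{\tau_0}=\dim\mul S_{r,0}$; summing over the orthogonal decomposition of $\gH_r$ gives $n_r=\dim\gH_r=\dim\ran\cB_{\tau_0}+\sum_{j=1}^l\dim\ran\cA_j$.

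The main obstacle I expect is the rank bookkeeping behind \eqref{3.24}, not the rationality equivalence. The residues and the term at infinity must be counted with exactly the right multiplicities, and this is where simplicity of the realization is indispensable: it is precisely simplicity that makes $\g_r(i)$ injective and forces $\dim\ran\cA_j$ and $\dim\ran\cB$ to equal the dimensions of the eigenspaces of $S_{r,0}$ and of $\mul S_{r,0}$. A second delicate point is treating the linear term $\l\cB$ as a genuine pole at infinity realized by a multivalued relation part, so that its contribution $\dim\ran\cB$ is neither lost nor double-counted; and one must check that discarding the constant $\cH''$- and $\cK$-blocks in passing between $\tau_0$ and $\tau_1$ preserves all poles, residues, and the datum at infinity, which is what makes the reduction in the first paragraph legitimate.
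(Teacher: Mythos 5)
A point of comparison first: the paper does not actually prove Proposition \ref{pr3.17} --- it is stated as well known, with a citation to \cite{DajLan18} --- so there is no in-paper proof to measure you against. The closest internal material is the proof of Theorem \ref{th3.18}, and your first paragraph reproduces exactly its reduction scheme: $\RH=\RCH$ in finite dimensions, the decomposition $\tau=\{\cH'\oplus\cH''\oplus\cK,B_1,B_2,\tau_1\}$ of Corollary \ref{cor2.4}, $S(\wt A_\tau)=A_{\t_0}$ via Proposition \ref{pr3.3}, the triplet $\Pi'$ of Lemma \ref{lem2.7}, and $\wt A_\tau=\wt S_{\tau_1}$ via Lemma \ref{lem3.2}; your observation that the non-constant part of $\tau_0$ sits in the $\cH'$-corner and that ranks are preserved under zero-padding is exactly the paper's \eqref{3.27.1}--\eqref{3.27.2}. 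Where the paper then invokes Proposition \ref{pr3.17} for $\tau_1$ as a black box, you supply the finite-dimensional core by realization theory, which is essentially the argument of \cite{DajLan18} and the Krein--Langer $Q$-function calculus \cite{KreLan73}. Your only-if computation is sound: identities \eqref{2.25}, \eqref{2.25.1} give $\tau_1(\l)=\Re\tau_1(i)+\g_r^*(i)\bigl(\l+(1+\l^2)(S_{r,0}-\l)^{-1}\bigr)\g_r(i)$, whence $\cB'=\g_r^*(i)P_\infty\g_r(i)$ and $\cA_j'=(1+\a_j^2)\,\g_r^*(i)P_{\a_j}\g_r(i)$ (you omit the harmless factor $1+\a_j^2$); since $P_{\a_j}\g_r(\l)=\tfrac{\a_j-i}{\a_j-\l}\,P_{\a_j}\g_r(i)$ and $P_\infty\g_r(\l)$ is constant in $\l$, simplicity forces $\ran (P_{\a_j}\g_r(i))$ to fill each eigenspace and $\ran (P_\infty\g_r(i))=\mul S_{r,0}$, which is precisely the rank bookkeeping behind \eqref{3.24}.

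Two caveats you should make explicit. Your argument leans on two facts stated nowhere in the paper: (a) minimality of the exit space extension is equivalent to simplicity of $S_r$ in the coupling \eqref{3.10}, and (b) a simple symmetric relation together with a boundary triplet is determined up to unitary equivalence by its Weyl function --- (b) is the single load-bearing step of your ``if'' direction, since it is what identifies your explicit finite-dimensional model with the realization furnished by Proposition \ref{pr3.7}. Both are standard in the boundary-triplet literature (results of the type in \cite{DM91,Mal92,DM09}), but in a self-contained write-up they need citations or proofs. Finally, note that \eqref{3.24} tacitly presupposes the $\a_j$ pairwise distinct (coinciding poles must be merged before counting ranks, since $\dim\ran(\cA_j+\cA_k)$ can be smaller than the sum); you use this distinctness for simplicity of your model, so it should be stated as a hypothesis rather than assumed silently.
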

In the following two theorems we extend Proposition \ref{pr3.17} to a ceratin class of self-adjoint extensions $\wt A$ of a symmetric relation $A$ with possibly infinite deficiency indices $n_+(A)=n_-(A)\leq \infty$ and characterise compressions $C(\wt A)$ of such extensions.
\begin{theorem}\label{th3.18}
Let $A\in\C (\gH)$ be a symmetric linear relation in $\gH$ with equal deficiency indices $n_+(A)=n_-(A)\leq\infty$, let $\Pi=\bt$ be a boundary triplet for $A^*$, let $A_0=\ker\G_0$, let $\tau \in \RH$ and let $\wt A_\tau=\wt A_\tau^*\in\C (\wt\gH)\;(\wt\gH\supset \gH)$ be the corresponding exit space extension of $A$ (see Theorem \ref{th2.14}). Assume also that $\tau_0\in R[\cH_0]$ is the operator part of $\tau$ (see \eqref{2.9}). Then the following statements are equivalent:

{\rm (i)} $\wt A_\tau$ is finite-codimensional and the linear relation $\SAt\wh + A_0$ is closed;

{\rm (ii)} $\tau_0$ is the rational function \eqref{3.23.1}, \eqref{3.23.2} such that $\dim\ran \cB<\infty$ and $\dim \ran A_j<\infty,\; j=1,2,\dots, l$.

Moreover, if statement {\rm (ii)} (and hence {\rm (i)}) is valid, then the dimension $n_r$ of $\gH_r=\wt\gH\ominus \gH$ satisfies \eqref{3.24}.
\end{theorem}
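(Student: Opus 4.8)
The plan is to reduce the whole assertion to Proposition \ref{pr3.17}, applied not to $A$ itself but to the intermediate symmetric extension $S=\SAt$ equipped with the boundary triplet $\Pi'=\{\cH',\G_0',\G_1'\}$ produced by Lemma \ref{lem2.7}. The engine of the reduction is the decomposition $\tau=\{\cH'\oplus\cH''\oplus\cK,B_1,B_2,\tau_1\}$ of Corollary \ref{cor2.4}, together with Lemma \ref{lem3.2}, which identifies $\wt A_\tau$ with $\wt S_{\tau_1}$ (in $\Pi'$) and in particular shows that these two extensions share the same exit space $\gH_r$. Since $\tau_1\in R_u[\cH']$ has no multivalued part, its operator part is $\tau_1$ itself, so once one knows $\dim\cH'<\infty$ Proposition \ref{pr3.17} becomes available for $(S,\Pi',\tau_1)$. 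I will run this reduction in both directions and transport the rational structure between the full operator part $\tau_0$ and the ``compressed'' parameter $\tau_1$ via the block form \eqref{2.9.1.1}.

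For (i)$\Rightarrow$(ii) the first task is to secure $\tau\in\RCH$. As $\SAt\wh + A_0$ is closed by hypothesis, Theorem \ref{th3.5}(i) reduces this to closedness of $\TAt$, and here finite-codimensionality does the work: writing $\wt\gH^2=\gH^2\oplus\gH_r^2$ and letting $P=P_\gH\oplus P_\gH$ be the orthogonal projection onto $\gH^2$ (with finite-dimensional kernel $\gH_r^2$), the set $M:=\wt A_\tau\wh + \gH_r^2$ is closed and $\TAt=P(\wt A_\tau)=M\cap\gH^2$ is closed. Thus $\tau\in\RCH$ and the decomposition of Corollary \ref{cor2.4} applies. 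I then invoke Proposition \ref{pr3.7} for $\tau_1\in R_u[\cH']$ to realise $\tau_1$ as the Weyl function of a boundary triplet $\Pi_r=\{\cH',\G_0^r,\G_1^r\}$ for $S_r^*$ with $S_r\in\C(\gH_r)$; since $\gH_r$ is finite-dimensional, Proposition \ref{pr2.5} forces $\dim\cH'=n_\pm(S_r)\le\dim\gH_r<\infty$. Now $S$ has finite deficiency indices, so Proposition \ref{pr3.17} applied to $(S,\Pi',\tau_1)$ shows that $\tau_1$ is the rational function \eqref{3.23.1}, \eqref{3.23.2} with finite-rank coefficients. Substituting this $\tau_1$ into the block representation \eqref{2.9.1.1} of $\tau_0$ (the constant blocks $-B_1,-B_1^*,-B_2$ contributing only to the $\l$-independent term) exhibits $\tau_0$ in the form \eqref{3.23.1} with $\cB=\cB_{\tau_1}\oplus 0$ and each $\cA_j$ supported in $\cH'$, whence $\dim\ran\cB,\dim\ran\cA_j<\infty$; this is (ii).

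For (ii)$\Rightarrow$(i) I run the computation in reverse. Because $\cB,\cA_j\ge 0$, for every $\l\in\CR$ the range of $\im\tau_0(\l)$ lies in the finite-dimensional space $\ran\cB+\sum_j\ran\cA_j$ and is therefore closed, so $\tau_0\in R_c[\cH_0]$, i.e.\ $\tau\in\RCH$; by Theorem \ref{th3.5}(i) this already gives that $\SAt\wh + A_0$ is closed. The same positivity yields $\cH''=\ker\im\tau_0(\l)=\ker\cB\cap\bigcap_j\ker\cA_j$, hence $\cH'=\ran\cB+\sum_j\ran\cA_j$ is finite-dimensional while the off-diagonal and $\cH''$-blocks of $\tau_0$ are constant; consequently the $\cH'$-block $\tau_1\in R_u[\cH']$ is itself rational of the form \eqref{3.23.1}, \eqref{3.23.2}. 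Proposition \ref{pr3.17} applied to $(S,\Pi',\tau_1)$ then shows $\wt S_{\tau_1}=\wt A_\tau$ is finite-codimensional with $n_r=\dim\ran\cB_{\tau_1}+\sum_j\dim\ran\cA_j$ computed for $\tau_1$, which is (i). Using $\dim\ran\cB_{\tau_1}=\dim\ran\cB$ and the identification of the residues of $\tau_1$ with the $\cH'$-supported $\cA_j$, this is exactly \eqref{3.24}, proving the final assertion in both directions.

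The steps needing the most care are precisely the passage to $\tau\in\RCH$ and the reduction $\dim\cH'<\infty$; everything downstream is an application of Proposition \ref{pr3.17} together with the block bookkeeping of Corollary \ref{cor2.4}. In the direction (i)$\Rightarrow$(ii) the only genuinely new ingredient is the elementary but crucial fact that an orthogonal projection along a finite-dimensional subspace maps closed subspaces to closed subspaces, which converts finite-codimensionality into closedness of $\TAt$ and thereby unlocks Theorem \ref{th3.5}. In the direction (ii)$\Rightarrow$(i) the main point is that positivity together with finiteness of rank of $\cB$ and the $\cA_j$ automatically confines $\cH'$ to a finite-dimensional space, so that the possibly infinite-dimensional $\cH$ plays no role once the inert summands $\cH''$ and $\cK$ have been split off and the problem has been transferred to $S$.
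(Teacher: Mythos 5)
Your proposal is correct, and at the structural level it is the paper's proof: decompose $\tau=\{\cH'\oplus\cH''\oplus\cK,B_1,B_2,\tau_1\}$ via Corollary \ref{cor2.4}, pass to $S=\SAt=A_{\t_0}$ (Proposition \ref{pr3.3}) with the induced triplet $\Pi'$ of Lemma \ref{lem2.7}, identify $\wt A_\tau=\wt S_{\tau_1}$ by Lemma \ref{lem3.2}, apply Proposition \ref{pr3.17} to $(S,\Pi',\tau_1)$, and transport rationality through the block formula \eqref{2.9.1.1} — your matrices for $\cA,\cB,\cA_j$ are exactly the paper's \eqref{3.27.1}. Where you genuinely deviate is in the two auxiliary steps that the paper outsources to \cite{DM06}. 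First, for closedness of $\TAt$ in (i)$\Rightarrow$(ii) the paper cites Proposition 2.15 of \cite{DM06} (via the auxiliary relation $T_r(\wt A_\tau)$ in $\gH_r$), whereas you prove it directly from the identity $\TAt=(\wt A_\tau\wh + \gH_r^2)\cap\gH^2$ together with the fact that the sum of a closed subspace and a finite-dimensional one is closed; this is a correct, self-contained replacement. Second, for $\dim\cH'<\infty$ the paper invokes Lemma 2.14 of \cite{DM06} in the form $n_\pm(\SAt)=n_\pm(S_r)$ with $S_r=\wt A_\tau\cap\gH_r^2$, whereas you apply Proposition \ref{pr3.7} to $(S,\Pi',\tau_1)$ to realize $\tau_1$ as the Weyl function of a triplet $\Pi_r=\{\cH',\G_0^r,\G_1^r\}$ for $S_r^*$ with $S_r\in\C(\gH_r)$ symmetric, and then read off $\dim\cH'=n_\pm(S_r)\leq\dim\gH_r<\infty$ from Proposition \ref{pr2.5}; this is legitimate (minimality of $\wt A_\tau$ is a condition over $\gH$ only, hence is the same whether $\wt A_\tau$ is viewed as an exit space extension of $A$ or of $S$, and equivalence of realizations preserves $\dim\gH_r$), and it keeps the argument entirely inside the paper's own toolkit — note it still yields $n_\pm(S)<\infty$, so the reuse of this fact in Corollary \ref{cor3.26} survives your variant. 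Your shortcut in (ii)$\Rightarrow$(i) — concluding $\tau_0\in R_c[\cH_0]$ at once because $\ran\im\tau_0(\l)$ lies in the finite-dimensional space $\ran\cB+\sum_j\ran\cA_j$ — is also sound and slightly more direct than the paper's route through Proposition \ref{pr2.2}, although, as you recognize, you still need the full $R_u[\cH']$ block structure of $\tau_1$ for the application of Proposition \ref{pr3.17} and the count \eqref{3.24}.
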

\begin{proof}
Assume statement (i) and let $S_r= S_r(\wt A_\tau)$ and $T_r(\wt A_\tau)$ be linear relations in $\gH_r$ given by
\begin{gather*}
S_r=\wt A_\tau\cap \gH_r^2, \qquad T_r(\wt A_\tau) =\{\{P_{\gH_r} f, P_{\gH_r} f' \}:\{f,f'\}\in \wt A_\tau\}.
\end{gather*}
Since dim $\gH_r<\infty$, it follows that $ T_r(\wt A_\tau)$ is closed and according to \cite[Proposition 2.15]{DM06} $T(\wt A_\tau)$ is closed as well. Therefore  by  Theorem \ref{th3.5} $\tau\in\RCH$. Let in accordance with Corollary \ref{cor2.4} $\tau_0$ be decomposed as in \eqref{2.9.1.1} with $\tau_1\in R_u[\cH']$, that is $\tau=\{\cH'\oplus\cH''\oplus\cK,B_1,B_2,\tau_1\}$ in the sense of Remark \ref{rem2.4a}. Then by Proposition \ref{pr3.3} the relation $S=\SAt$ admits the representation \eqref{2.21}. Let $\Pi'=\{\cH',\G_0',\G_1'\}$ be boundary triplet \eqref{2.23} for $S^*$. Then by Lemma \ref{lem3.2} $\wt A_\tau= \wt S_{\tau_1}$ (in the triplet $\Pi'$). Moreover, according to \cite[Lemma 2.14]{DM06} $n_\pm(S)=n_\pm(S_r)$ and, consequently, $n_+(S)=n_-(S)<\infty$. Therefore $\dim\cH'<\infty $ and by Proposition \ref{pr3.17}
\begin{gather}\label{3.27}
\tau_1(\l)=\cA_0+\l\cB_0 + \sum_{j=1}^l \frac 1 {\a_j-\l}\cA_{0j}, \quad \l\in\CR,
\end{gather}
where $\a_j\in\bR$ and $\cA_0,\cB_0,\cA_{0j}\in\B(\cH')$ are the operators satisfying $\cA_0=\cA_0^*$, $\cB_0\geq 0$ and $\cA_{0j}\geq 0, \cA_{0j}\neq 0, \; j=1,2,\dots, l$.This and the block representation \eqref{2.9.1.1} of $\tau_0(\l)$ imply that $\tau_0(\l)$  is of the form \eqref{3.23.1} with
\begin{gather}\label{3.27.1}
\cA=\begin{pmatrix} \cA_0 & - B_1 \cr -B_1^* & -B_2\end{pmatrix}, \qquad \cB=\begin{pmatrix} \cB_0& 0\cr 0 &0 \end{pmatrix}, \qquad \cA_j=\begin{pmatrix} \cA_{0j}& 0\cr 0 &0 \end{pmatrix}, \;\; j=1,2, \dots, l.
\end{gather}
Hence $\cA=\cA^*, \; \cB\geq 0, \; \cA_j\geq 0, \;\cA_j\neq 0$ and
\begin{gather}\label{3.27.2}
\dim\ran \cB=\dim\ran \cB_0 <\infty, \qquad \dim\ran\cA_j=\dim\ran\cA_{0j}<\infty,
\end{gather}
which yields statement (ii).

Conversely, let statement (ii) holds. Put $\f_0(\l)=\im\l$ and $\f_j(\l)=\im \frac1 {\a_j-\l}$. Then by \eqref{3.23.1}
\begin{gather}\label{3.28}
\im\tau_0(\l)=\f_0(\l)\cB + \sum_{j=1}^l \f_j(\l)\cA_j, \quad \l\in \CR.
\end{gather}
Since $\im\l\cd \im\f_j(\l)>0,\; j=0,1, \,\dots,\, l, $ it follows from \eqref{3.28} that
\begin{gather}\label{3.29}
\cH'':=\ker\im \tau_0(\l)=\ker\cB\cap\left(\bigcap_{j=1}^l \ker \cA_j \right)
\end{gather}
and, consequently, $\cH':=\cH_0\ominus\cH''=\ran \cB+\ran A_1+\dots +\ran A_l$. Therefore $\dim\cH'<\infty  $ and in view of \eqref{3.29} block representations \eqref{3.27.1} hold (with respect to  the decomposition $\cH_0=\cH'\oplus \cH'')$. Combining \eqref{3.27.1} with \eqref{3.23.1} one obtains the block representation \eqref{2.9.1.1} of $\tau_0(\l)$ with $\tau_1\in R[\cH']$ given by \eqref{3.27}. Since $\dim\cH'<\infty$, it follows that $\tau_1\in R_u[\cH']$ and according to  Proposition \ref{pr2.2} $\tau_0\in R_c [\cH_0]$. Therefore $\tau \in\RCH$ and by Theorem \ref{th3.5} $\SAt\wh + A_0$ is closed.

Now it remains to show that $\wt\cA_\tau$ is finite-codimensional and \eqref{3.24} holds. Since $\tau=\{\cH'\oplus\cH''\oplus \cK, B_1,B_2,\tau_1\}$, it follows from Proposition \ref{pr3.3} that $S=\SAt$ admits the representation \eqref{2.21} and by Lemma \ref{lem2.7} the equalities \eqref{2.23} define a boundary triplet $\Pi'=\{\cH',\G_0', \G_1'\}$ for $S^*$. Therefore by Proposition \ref{pr2.5} $n_\pm(S)=\dim\cH'<\infty$. Moreover, by Lemma \ref{lem3.2} $\wt A_\tau =\wt S_{\tau_1}$ (in the triplet $\Pi'$). Thus in accordance with Proposition \ref{pr3.17} $\wt A_\tau$ is finite-codimensional and
\begin{gather*}
n_r=\dim\ran \cB_0 + \sum_{j=1}^l  \dim\ran \cA_{0j}.
\end{gather*}
Combining this equality with \eqref{3.27.2} we arrive at \eqref{3.24}.
\end{proof}
\begin{theorem}\label{th3.19}
Let the assumptions of Theorem \ref{th3.18} be satisfied and let claim {\rm (ii)}  (equivalently {\rm (i)}) of this theorem be valid. Moreover, let $\cA'=P_{\ker\cB}\cA  \up \ker \cB\in \B (\ker\cB)$, let $\cK$ be the multivalued part of $\tau$ (see \eqref{2.9}) and let $\t_c$ be a self-adjoint linear relation in $\cH$ given by
\begin{gather}\label{3.35}
\t_c=\{\{h,-\cA' h\oplus h'\oplus k\}:h\in \ker\cB, \, h'\in\ran\cB, k\in\cK\}
\end{gather}
(this means that $-\cA'$ is the operator part of $\t_c$ and $\mul\t_c=\ran\cB\oplus\cK$). Then the extension $\wt A_\tau$ is finite-codimensional, the compression $\CAt$ of  $\wt A_\tau$ is  self-adjoint and $\CAt=A_{\t_c}$ (in the triplet $\Pi$).

Moreover, the following holds:

{\rm (i)} $\CAt=A_0$ if and only if $\ker \cB=0$ (that is $\cB>0$);

{\rm (ii)} $\CAt$ and $A_0$ are transversal if and only if $\tau\in R[\cH]$ and $\cB=0$, in which case $\t_c=-\cA$.
\end{theorem}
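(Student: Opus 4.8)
The plan is to read everything off from the general compression formula of Theorem \ref{th3.8}, specialized to the rational parameter. Since statement (ii) of Theorem \ref{th3.18} is assumed, that theorem already gives that $\wt A_\tau$ is finite-codimensional and that $\tau\in\RCH$ with operator part $\tau_0\in R_c[\cH_0]$; in particular the hypotheses of Theorem \ref{th3.8} are in force. First I would feed the rational form \eqref{3.23.1} into the defining limit \eqref{2.3} and let $y\to\infty$: the contributions $\cA/(iy)$ and $\cA_j/\big(iy(\a_j-iy)\big)$ vanish, leaving $\cB_{\tau_0}=\cB$. Because $\dim\ran\cB<\infty$, the range $\ran\cB_{\tau_0}=\ran\cB$ is closed, which is precisely the extra hypothesis needed for the sharp form \eqref{3.14} of Theorem \ref{th3.8}.

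Next I would compute the asymptotics governing $\cN_{\tau_0}$. Using $\cA=\cA^*$, $\cB\geq0$ and $\cA_j\geq0$ one obtains
\begin{gather*}
y\,\im(\tau_0(iy)h,h)=y^2(\cB h,h)+\sum_{j=1}^l\frac{y^2}{\a_j^2+y^2}(\cA_j h,h),\quad h\in\cH_0.
\end{gather*}
As $y\to\infty$ this stays finite exactly when $(\cB h,h)=0$, i.e. when $h\in\ker\cB$, and then the limit equals $\sum_j(\cA_j h,h)<\infty$. Hence \eqref{2.3.1} yields $\dom\cN_{\tau_0}=\ker\cB$, and since $1/(\a_j-iy)\to0$ the limit \eqref{2.3.2} gives $\cN_{\tau_0}h=\cA h$ for $h\in\ker\cB$. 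Substituting $\cB_{\tau_0}=\cB$, $\dom\cN_{\tau_0}=\ker\cB$ and $\cN_{\tau_0}=\cA\up\ker\cB$ into \eqref{3.14} presents $\CAt=A_{\t_c}$ with $\mul\t_c=\ran\cB\oplus\cK$ and pairs $\{h,-\cA h+\cB\psi+k\}$, $h\in\ker\cB$, $\psi\in\cH_0$, $k\in\cK$.

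The one genuinely nonformal step is to recognize that this raw description collapses to \eqref{3.35}. Writing $\cH_0=\ker\cB\oplus\ran\cB$ and splitting $-\cA h=-\cA'h-P_{\ran\cB}\cA h$, the summand $-P_{\ran\cB}\cA h\in\ran\cB$ can be absorbed into the free term $\cB\psi$, which already sweeps out all of $\ran\cB$; what survives as the operator part is exactly $-\cA'=-P_{\ker\cB}\cA\up\ker\cB$. This is the only point demanding care, since $\cN_{\tau_0}=\cA\up\ker\cB$ need not map into $\cH\ominus\mul\t_c=\ker\cB$. Once \eqref{3.35} is in hand, self-adjointness of $\CAt$ follows from Proposition \ref{pr3.14}, whose hypotheses hold here ($\ran\cB_{\tau_0}$ closed and, by the computation above, the limit finite on $\ker\cB_{\tau_0}=\ker\cB$); alternatively one checks directly that $\mul\t_c=\ran\cB\oplus\cK$ is closed with $\cH\ominus\mul\t_c=\ker\cB=\dom\t_c$, so $\t_c=\t_c^*$ by Lemma \ref{lem2.4b}, (ii), and then $A_{\t_c}=\CAt$ is self-adjoint by Proposition \ref{pr2.6}, (v).

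For the remaining two claims I would argue through the bijection $\t\mapsto A_\t$ of Proposition \ref{pr2.6}, (ii). Since $A_0=A_{\{0\}\oplus\cH}$, claim (i) amounts to $\t_c=\{0\}\oplus\cH$, i.e. $\dom\t_c=\{0\}$, i.e. $\ker\cB=\{0\}$; and when $\ker\cB=\{0\}$ the bound $\dim\ran\cB<\infty$ forces $\ran\cB=\cH_0$ and hence $\dim\cH_0<\infty$, so on $\cH_0$ the conditions ``$\cB\geq0$ injective'' and $\cB>0$ coincide. For claim (ii) I would invoke Proposition \ref{pr2.6}, (vii): the closed extensions $\CAt=A_{\t_c}$ and $A_0$ are transversal iff $\t_c\in\B(\cH)$, which by \eqref{3.35} means $\mul\t_c=\ran\cB\oplus\cK=\{0\}$ together with $\dom\t_c=\cH$. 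This is equivalent to $\cB=0$ and $\cK=\{0\}$, and since $\cK=\mul\tau$ the vanishing $\cK=\{0\}$ is exactly $\tau\in R[\cH]$; in that case $\ker\cB=\cH_0=\cH$ gives $P_{\ker\cB}=I$, so $\cA'=\cA$ and $\t_c=-\cA$.
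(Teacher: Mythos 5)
Your proposal is correct, and its computational core is identical to the paper's: the paper likewise computes $y\im(\tau_0(iy)h,h)=y^2(\cB h,h)+\sum_{j=1}^l\tfrac{y^2}{\a_j^2+y^2}(\cA_j h,h)$ to get $\dom\cN_{\tau_0}=\ker\cB$ and $\cN_{\tau_0}=\cA\up\ker\cB$, notes $\cB_{\tau_0}=\cB$ with closed (finite-dimensional) range so that the sharp form \eqref{3.14} of Theorem \ref{th3.8} applies, and then performs exactly your absorption step, observing that since $h'\in\ran\cB$ is arbitrary the raw description is equivalent to \eqref{3.35}. Where you genuinely diverge is the endgame. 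The paper disposes of the self-adjointness of $\CAt$ implicitly (it is built into the form \eqref{3.35} via Lemma \ref{lem2.4b}) and settles claims (i) and (ii) in one line by citing the earlier general results Corollary \ref{cor3.11} and Theorem \ref{th3.16}; you instead read everything off directly from \eqref{3.35}: self-adjointness via Proposition \ref{pr3.14} (whose hypotheses you verify) or via Lemma \ref{lem2.4b}, (ii) plus Proposition \ref{pr2.6}, (v); claim (i) via the bijectivity of $\t\mapsto A_\t$ and the identification $A_0=A_{\{0\}\oplus\cH}$; claim (ii) via the transversality criterion $\t_c\in\B(\cH)$ of Proposition \ref{pr2.6}, (vii). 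Both routes are valid; the paper's is shorter because it reuses machinery already established, while yours makes the theorem self-contained at the level of Section 2, and it has the small bonus of actually justifying the parenthetical ``$\ker\cB=0$, that is $\cB>0$'': you note that $\ker\cB=\{0\}$ together with $\dim\ran\cB<\infty$ forces $\ran\cB=\cH_0$ and $\dim\cH_0<\infty$, so injectivity of $\cB\geq 0$ is equivalent to $\cB>0$ in the paper's sense $\cB-\a I\geq 0$ --- a point the paper passes over silently. One pedantic remark: in your direct proof of (i), the equivalence $\t_c=\{0\}\oplus\cH\iff\dom\t_c=\{0\}$ also needs $\mul\t_c=\cH$ when $\ker\cB=\{0\}$, but the same finite-rank observation ($\ran\cB=\cH_0$, hence $\mul\t_c=\cH_0\oplus\cK=\cH$) supplies it, so no gap remains.
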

\begin{proof}
It follows from Theorem \ref{th3.18} that $\wt A_\tau$ is finite-codimensional. Moreover, by Theorem \ref{th3.8}  $\CAt=\t_c$ with symmetric relation $\t_c$ in $\cH$ given by \eqref{3.13}. Since in view of \eqref{3.23.1}
\begin{gather*}
y \im (\tau_0(iy)h,h)=y^2 (\cB h,h)+\sum_{j=1}^l \frac {y^2}{\a_j^2+y^2}(\cA_j h,h), \quad h\in\cH_0,
\end{gather*}
it follows that $\lim\limits_{y\to \infty}y\im (\tau_0(iy)h,h)<\infty$ if and only if $h\in\ker\cB$. Therefore by definitions \eqref{2.3.1} and \eqref{2.3.2} $\dom\cN_{\tau_0}=\ker\cB$ and
\begin{gather*}
\cN_{\tau_0}h=\lim_{y\to\infty}\tau_0(iy) h =\cA h, \quad h\in\ker\cB(= \dom\cN_{\tau_0})
\end{gather*}
Moreover, by \eqref{2.3} $\cB_{\tau_0}=\cB$ and the finite-dimensionality of $\ran\cB$ yields $\ran \cB_{\tau_0}= \ov{\ran \cB_{\tau_0}}=\ran\cB$. Therefore by Theorem \ref{th3.8} $\mul \t_c=\ran\cB\oplus\cK$ and \eqref{3.13} can be written as
\begin{gather}\label{3.36}
\t_c=\{\{h,-\cA h+( h'\oplus k)\}:h\in \ker\cB, \, h'\in\ran\cB, k\in\cK\}
\end{gather}
Since $\cH_0=\ker\cB\oplus\ran\cB$ and $h'\in \ran\cB$ in \eqref{3.36} is arbitrary, the equality \eqref{3.36} is equivalent to \eqref{3.35}.  Finally, statements (i) and (ii) are implied by Corollary \ref{cor3.11} and Theorem \ref{th3.16}.
\end{proof}
\subsection{The case of finite deficiency indices}
In this subsection we suppose that the following assumption (a) is satisfied:

(a) $A$ is a closed symmetric linear relation in $\gH$ with finite deficiency indices $n_+(A)=n_-(A)<\infty$, $\Pi=\bt$ is a boundary triplet for $A^*$, $\tau\in\RH$, $\tau_0\in R[\cH_0]$ is the operator part of $\tau$ (see \eqref{2.9}), $\wt A_\tau=\wt A_\tau^*$ is the corresponding exit space extension of $A$ and $\CAt$ is the compression of $\wt A_\tau$.

Then $\ex=\cex$ and hence for each exit space extension $\wt A=\wt A^*$ of $A$ the linear relations $C(\wt A), \; T(\wt A)$ and $S(\wt A)\wh + A_0$ are closed (here $A_0=\ker \G_0$). Moreover, under the assumption (a) $\dim\cH<\infty$, which implies that $\RH=\RCH$, $R [\cH']=R_c [\cH']$ and  $\ov{\ran\cB_\tau}= \ran\cB_\tau$ for any subspace $\cH'\subset \cH$ and operator-function $\tau\in R[\cH']$. These facts together with  Theorems \ref{th3.8}, \ref{th3.10} and Corollaries \ref{cor3.11}, \ref{cor3.12} yield the following two theorems.
\begin{theorem}\label{th3.22}
Let the assumption (a) be satisfied. Then $\CAt=A_{\t_c}$ (in the triplet $\Pi$) with the symmetric linear relation $\t_c$ in $\cH$ defined by \eqref{3.14}.
\end{theorem}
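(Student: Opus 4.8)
The plan is to obtain Theorem~\ref{th3.22} as a direct specialization of Theorem~\ref{th3.8}, exploiting only the two simplifications that the finite-dimensionality of $\cH$ forces. First I would check that the hypotheses of Theorem~\ref{th3.8} are met. The sole nontrivial requirement there is $\tau\in\RCH$, whereas assumption~(a) provides merely $\tau\in\RH$. But under~(a) one has $\dim\cH<\infty$, and, as recorded in the paragraph preceding the theorem, this forces $\RH=\RCH$; hence $\tau\in\RCH$ automatically. I would also note that the operator part $\tau_0\in R[\cH_0]$ appearing in~(a) coincides with the $\tau_0\in R_c[\cH_0]$ demanded by Theorem~\ref{th3.8}, since $R[\cH_0]=R_c[\cH_0]$ when $\dim\cH_0<\infty$; consequently the operators $\cB_{\tau_0}$ and $\cN_{\tau_0}$ furnished by Proposition~\ref{pr2.1.1} are literally the same objects in both statements.

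With the hypotheses verified, Theorem~\ref{th3.8} applies verbatim and yields $\CAt=A_{\t_c}$ (in the triplet $\Pi$) with $\t_c$ the symmetric linear relation of~\eqref{3.13}, together with the inclusion $\ran\cB_{\tau_0}\oplus\cK\subset\mul\t_c$ and the identity $\ov{\mul\t_c}=\ov{\ran\cB_{\tau_0}}\oplus\cK$. Next I would invoke the final, conditional assertion of Theorem~\ref{th3.8}. Since $\cH_0\subset\cH$ is finite-dimensional, every linear manifold in $\cH_0$ is closed; in particular $\ran\cB_{\tau_0}$ is closed. This is precisely the extra hypothesis needed, so $\mul\t_c=\ran\cB_{\tau_0}\oplus\cK$ and $\t_c$ takes the explicit form~\eqref{3.14}. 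The symmetry of $\t_c$ (and hence the fact that $\CAt=A_{\t_c}$ is a symmetric extension of $A$) is already part of the conclusion of Theorem~\ref{th3.8} and requires no further argument.

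I do not anticipate a genuine obstacle here: the entire mathematical content is carried by Theorem~\ref{th3.8}, and the function of assumption~(a) is only to remove the two side conditions---membership in $\RCH$ and closedness of $\ran\cB_{\tau_0}$---that had to be imposed by hand in the infinite-dimensional setting. The single point deserving attention is the routine bookkeeping described above, ensuring that the $\tau_0$, $\cB_{\tau_0}$, $\cN_{\tau_0}$ and $\cK$ of the two statements are identified correctly; once this is done the theorem follows immediately.
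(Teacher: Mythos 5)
Your proposal is correct and follows essentially the same route as the paper: the author likewise derives Theorem \ref{th3.22} directly from Theorem \ref{th3.8}, observing that $\dim\cH<\infty$ forces $\RH=\RCH$ and $\ov{\ran\cB_{\tau_0}}=\ran\cB_{\tau_0}$, so that the conditional final clause of Theorem \ref{th3.8} applies and yields \eqref{3.14}. Your bookkeeping identifying $\tau_0$, $\cB_{\tau_0}$, $\cN_{\tau_0}$ and $\cK$ across the two statements matches the paper's (implicit) argument exactly.
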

\begin{theorem}\label{th3.23}
Let the assumption (a) be satisfied and let $A_0=\ker\G_0$. Then:

{\rm (i)} $\CAt\subset A_0$ if and only if the operator function $\tau_0$ satisfies
\begin{gather}\label{3.38}
\lim_{y\to\infty} y \im (\tau_0(iy)h,h)=\infty, \quad h\in\cH_0, \;\; h\neq 0.
\end{gather}
In this case $\CAt$ is defined by abstract boundary conditions \eqref{3.19.1}.

{\rm (ii)} $\CAt=A_0$ if and only if $\ker\cB_{\tau_0}=0$ (that is $\cB_{\tau_0}>0$ ).

{\rm (iii)} $\CAt=A$ if and only if $\tau\in R [\cH]$, $\cB_\tau=0$ and
\begin{gather}\label{3.39}
\lim_{y\to\infty} y \im (\tau(iy)h,h)=\infty, \quad h\in\cH, \;\; h\neq 0.
\end{gather}
\end{theorem}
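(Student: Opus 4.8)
The plan is to derive all three equivalences directly from the general results of the previous subsections, exploiting the automatic regularity that finite-dimensionality of $\cH$ provides. The central observation, already recorded just before the statement, is that under assumption (a) one has $\dim\cH<\infty$, so that $\RH=\RCH$, $R_c[\cH]=R[\cH]$, every nonnegative operator in $\cH$ has closed range, and the relations $\TAt$, $\CAt$ and $\SAt\wh + A_0$ are automatically closed. Consequently, every hypothesis in Theorem \ref{th3.10} and Corollaries \ref{cor3.11}, \ref{cor3.12} that demands closedness of $\TAt$, membership $\tau\in\RCH$ or $\tau\in R_c[\cH]$, or closedness of $\ran\cB_{\tau_0}$, is satisfied for free, and each general equivalence collapses onto the desired one. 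Thus the proof is essentially a bookkeeping exercise matching each claim to its ancestor.

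For part (i) I would invoke Theorem \ref{th3.10}, which asserts the equivalence of ``$\TAt$ is closed and $\CAt\subset A_0$'' with ``$\tau\in\RCH$ and $\tau_0$ satisfies \eqref{3.18}''. Since $\TAt$ is closed and $\tau\in\RH=\RCH$ hold automatically under (a), the left condition reduces to $\CAt\subset A_0$ and the right to \eqref{3.38}, which is \eqref{3.18} in the present notation. The explicit description \eqref{3.19.1} of $\CAt$ then follows from the final assertion of Theorem \ref{th3.10}, whose extra hypothesis that $\ran\cB_{\tau_0}$ be closed is automatic in finite dimensions.

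For part (ii) I would use Corollary \ref{cor3.11}. Its equivalence of ``$\TAt$ closed and $\ov{\CAt}=A_0$'' with ``$\tau\in\RCH$ and $\ker\cB_{\tau_0}=\{0\}$'' again loses its $\TAt$-closedness and $\RCH$ clauses under (a). To upgrade $\ov{\CAt}=A_0$ to the genuine equality $\CAt=A_0$ I would appeal to statement (i$\,^\prime$) of that same corollary, which is available precisely because $\ran\cB_{\tau_0}$ is closed here; this yields $\CAt=A_0\iff\ker\cB_{\tau_0}=\{0\}$. Finally, in finite dimensions a nonnegative operator with trivial kernel is boundedly invertible and hence positive definite, which justifies the parenthetical reformulation $\cB_{\tau_0}>0$.

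For part (iii) I would apply Corollary \ref{cor3.12}, which equates ``$\TAt$ closed and $\CAt=A$'' with ``$\tau\in R_c[\cH]$, $\cB_\tau=0$ and \eqref{3.20}''. Under (a) the closedness of $\TAt$ is automatic and $R_c[\cH]=R[\cH]$, so this reduces exactly to the assertion that $\CAt=A$ holds if and only if $\tau\in R[\cH]$, $\cB_\tau=0$ and \eqref{3.39} is satisfied. I do not expect any substantial obstacle, since the work is entirely in matching each claim to its general predecessor and verifying that every auxiliary hypothesis is a free consequence of $\dim\cH<\infty$. The only step requiring mild care is part (ii), where the passage from $\ov{\CAt}=A_0$ to the actual equality $\CAt=A_0$ genuinely invokes the closed-range property rather than being purely formal.
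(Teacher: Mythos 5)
Your proposal is correct and coincides with the paper's own argument: the paper proves Theorem \ref{th3.23} exactly by noting that under assumption (a) one has $\dim\cH<\infty$, hence $\RH=\RCH$, $R_c[\cH]=R[\cH]$, $\ran\cB_{\tau_0}$ is closed, and $\CAt$, $\TAt$, $\SAt\wh{+}A_0$ are automatically closed, so that Theorem \ref{th3.10} and Corollaries \ref{cor3.11}, \ref{cor3.12} specialize directly to statements (i)--(iii). Your extra care in part (ii) --- invoking statement (i$\,^\prime$) of Corollary \ref{cor3.11} to pass from $\ov{\CAt}=A_0$ to $\CAt=A_0$ via the closed-range property --- is precisely the step the paper relies on as well.
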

In the following theorem we describe all exit space  extensions $\wt A=\wt A^*$ of a symmetric relation  $A$ with finite deficiency indices such that the compression $C(\wt A)$  of $\wt A$ is self-adjoint.
\begin{theorem}\label{th3.24}
Let the assumption (a) be satisfied and let $A_0=\ker\G_0$. Then:

{\rm (i)} $\CAt$ is self-adjoint if and only if
\begin{gather}\label{3.40}
\lim_{y\to\infty} y \im (\tau_0(iy)h,h)<\infty, \quad h\in\ker \cB_{\tau_0}.
\end{gather}
If in particular $\tau_0$ is holomorphic out of a compact interval $[a,b]\subset \bR$, then $\CAt$ is self-adjoint.

{\rm (ii)}  $\CAt$ is self-adjoint and transversal with $A_0$ if and only if $\tau\in R [\cH]$ and
\begin{gather*}
\lim_{y\to\infty} y \im (\tau(iy)h,h)<\infty, \quad h\in\cH.
\end{gather*}
In this case
\begin{gather*}
\CAt=A_{-\cN_\tau}=\{\wh f\in A^*: \G_1 \wh f =-\cN_\tau \G_0 \wh f \},
\end{gather*}
where $\cN_\tau=\cN_\tau^*\in\B (\cH)$ is the operator given by $\cN_\tau=\lim\limits_{y\to\infty} \tau (iy)$.
\end{theorem}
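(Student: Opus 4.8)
The plan is to reduce both claims to the parametrization $\CAt=A_{\t_c}$, which is already available under assumption (a), and then to apply the finite-dimensional self-adjointness test of Lemma \ref{lem2.4b}, (\romannumeral 3). Since $\dim\cH<\infty$ gives $\wt R(\cH)=\RCH$, Theorem \ref{th3.22} applies and yields $\CAt=A_{\t_c}$ with $\t_c$ of the form \eqref{3.14}. Reading off \eqref{3.14} directly, one has $\dom\t_c=\dom\cN_{\tau_0}$ and, since $\ran\cB_{\tau_0}$ is automatically closed here, $\mul\t_c=\ran\cB_{\tau_0}\oplus\cK$. By Proposition \ref{pr2.6}, (\romannumeral 5), $\CAt$ is self-adjoint if and only if $\t_c$ is self-adjoint.

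For (\romannumeral 1) I would invoke Lemma \ref{lem2.4b}, (\romannumeral 3): in the finite-dimensional $\cH$ the relation $\t_c$ is self-adjoint exactly when $\cH\ominus\mul\t_c\subset\dom\t_c$. Writing $\cH=\cH_0\oplus\cK$ and using that $\cB_{\tau_0}\geq 0$ has closed range gives $\cH_0\ominus\ran\cB_{\tau_0}=\ker\cB_{\tau_0}$, so $\cH\ominus\mul\t_c=\ker\cB_{\tau_0}$. Hence the self-adjointness of $\t_c$ is equivalent to the inclusion $\ker\cB_{\tau_0}\subset\dom\cN_{\tau_0}$, which by the definition \eqref{2.3.1} of $\dom\cN_{\tau_0}$ is precisely condition \eqref{3.40}. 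For the concluding assertion of (\romannumeral 1) I would quote Corollary \ref{cor3.15}: when $\tau_0$ is holomorphic off a compact interval its Laurent expansion forces $\lim_{y\to\infty}y\im(\tau_0(iy)h,h)<\infty$ for all $h$, in particular on $\ker\cB_{\tau_0}$, so \eqref{3.40} holds.

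For (\romannumeral 2) the work is essentially carried by Theorem \ref{th3.16}. Under assumption (a) the relation $\TAt$ is automatically closed and $R_c[\cH]=R[\cH]$, so the equivalence of that theorem reads: $\CAt$ is self-adjoint and transversal with $A_0$ if and only if $\tau\in R[\cH]$ and \eqref{3.21} holds for every $h\in\cH$, with $\CAt=A_{-\cN_\tau}$ and $\cN_\tau=s\text{-}\lim_{y\to\infty}\tau(iy)$ as in \eqref{3.22}, \eqref{3.22.1}. The only addition here is $\cN_\tau=\cN_\tau^*$, which follows because the parameter $-\cN_\tau$ of the self-adjoint extension $\CAt$ must itself be self-adjoint by Proposition \ref{pr2.6}, (\romannumeral 5).

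I expect no deep obstacle once the parametrization $\CAt=A_{\t_c}$ is in hand; the one point needing care is the passage through Lemma \ref{lem2.4b}, (\romannumeral 3), namely the correct identification $\cH\ominus\mul\t_c=\ker\cB_{\tau_0}$ and the matching of the geometric condition $\ker\cB_{\tau_0}\subset\dom\cN_{\tau_0}$ with the analytic limit condition \eqref{3.40}.
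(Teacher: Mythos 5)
Your proposal is correct and follows essentially the same route as the paper's own proof: Theorem \ref{th3.22} gives $\CAt=A_{\t_c}$ with $\t_c$ of the form \eqref{3.14}, the identification $\cH\ominus\mul\t_c=\ker\cB_{\tau_0}$ together with Lemma \ref{lem2.4b}, (\romannumeral 3) and Proposition \ref{pr2.6}, (\romannumeral 5) yields statement (i), Corollary \ref{cor3.15} handles the holomorphic case, and statement (ii) is read off from Theorem \ref{th3.16} using that under assumption (a) the relation $\TAt$ is automatically closed and $R_c[\cH]=R[\cH]$. Your explicit remark that $\cN_\tau=\cN_\tau^*$ follows from Proposition \ref{pr2.6}, (\romannumeral 5) is a correct (and slightly more detailed) justification of a point the paper leaves implicit.
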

\begin{proof}
(i) According to Theorem \ref{th3.22} $\CAt=A_{\t_c}$ with  a symmetric linear relation $\t_c$ in $\cH$ defined by \eqref{3.14}. It follows from \eqref{3.14} that $\mul\t_c=\ran\cB_{\tau_0}\oplus\cK$ and therefore $\cH\ominus \mul\t_c=\cH_0\ominus \ran\cB_{\tau_0}=\ker \cB_{\tau_0}$. Moreover by \eqref{3.14} $\dom\t_c=\dom\cN_{\tau_0}$ and hence the inclusion $\cH\ominus\mul\t_c \subset \dom\t_c$ is equivalent to \eqref{3.40}. Therefore by Lemma \ref{lem2.4b}, (iii) \eqref{3.40} is equivalent to self-adjointness of $\t_c$, which by Proposition \ref{pr2.6}, (\romannumeral 5) is equivalent to $\CAt^*=\CAt$. Thus $\CAt^*=\CAt$ if and only if \eqref{3.40} holds. The second part of the statement (i) is implied by Corollary \ref{cor3.15}.

(ii) Statement (ii) directly follows from Theorem \ref{th3.16} and arguments before Theorem \ref{th3.22}.
\end{proof}
In the following theorem we characterize compressions of finite-codimensional extensions $\wt A=\wt A^*$ of a symmetric relation $A$ with finite deficiency indices. The statements of this theorem follow from Theorem \ref{th3.19} and arguments before Theorem \ref{th3.22}.
\begin{theorem}\label{th3.25}
Let under the assumption (a) $\tau_0$ be a rational function \eqref{3.23.1}, \eqref{3.23.2} (according to Proposition \ref{pr3.17} this assumption is equivalent to the finite-codimensionality of $\wt A_\tau$). Moreover, let $\cA'=P_{\ker \cB}\cA\up \ker\cB$ and let $\cK$ be the multivalued part of $\tau$ (see \eqref{2.9}). Then $\CAt$ is a self-adjoint linear relation in $\gH$ and $\CAt=A_{\t_c}$ with the relation $\t_c=\t_c^*\in\C (\cH)$ given by \eqref{3.35}. Moreover, statements {\rm (i)} and {\rm (ii)} of Theorem \eqref{th3.19} are valid.
\end{theorem}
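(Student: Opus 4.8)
The plan is to obtain Theorem \ref{th3.25} as a direct specialization of Theorem \ref{th3.19} to the situation of assumption (a), the only genuine task being to check that the extra finiteness hypotheses needed to invoke Theorem \ref{th3.19} are automatically satisfied once $\dim\cH<\infty$. In other words, the content of the theorem is already contained in Theorem \ref{th3.19}, and the proof amounts to verifying that its premises are in force and then quoting it.

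First I would record that, under assumption (a), Proposition \ref{pr2.5} gives $\dim\cH=n_+(A)=n_-(A)<\infty$, so the subspace $\cH_0\subset\cH$ carrying the operator part $\tau_0$ is finite-dimensional and every bounded operator on $\cH_0$ has finite-dimensional range. Consequently, if $\tau_0$ is the rational function \eqref{3.23.1}, \eqref{3.23.2}, then the coefficients $\cB\geq 0$ and $\cA_j\geq 0$ automatically satisfy $\dim\ran\cB\leq\dim\cH_0<\infty$ and $\dim\ran\cA_j<\infty$. Thus claim (ii) of Theorem \ref{th3.18} holds verbatim, and therefore so does its equivalent claim (i): the extension $\wt A_\tau$ is finite-codimensional and the relation $\SAt\wh + A_0$ is closed. (The closedness of $\SAt\wh + A_0$ is in any case guaranteed by the arguments preceding Theorem \ref{th3.22}, since $\dim\cH<\infty$ forces $\ex=\cex$, and there one also uses $\RH=\RCH$.)

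With the hypotheses of Theorem \ref{th3.19} now satisfied, I would simply invoke that theorem. It yields at once that $\wt A_\tau$ is finite-codimensional, that the compression $\CAt$ is self-adjoint, and that $\CAt=A_{\t_c}$ (in the triplet $\Pi$) with $\t_c$ given by \eqref{3.35}; that $\t_c=\t_c^*\in\C(\cH)$ then follows from Proposition \ref{pr2.6}, (\romannumeral 5), since $A_{\t_c}$ is self-adjoint. Statements (i) and (ii) of Theorem \ref{th3.25} coincide with statements (i) and (ii) of Theorem \ref{th3.19}, so they transfer without change.

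I do not expect a real obstacle: the whole argument reduces to the observation that finite-dimensionality of $\cH$ renders the range-finiteness conditions on $\cB$ and the $\cA_j$ vacuous, so that rationality of $\tau_0$ alone triggers Theorem \ref{th3.19}. The one point deserving care is bookkeeping rather than depth, namely that the rationality must be stated for the operator part $\tau_0$ on $\cH_0$ (exactly as in the hypotheses of Theorem \ref{th3.18}) rather than for $\tau$ itself, so that the multivalued part $\cK=\mul\tau(\l)$ is correctly carried through into the description \eqref{3.35} of $\t_c$.
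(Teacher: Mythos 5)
Your proposal is correct and follows essentially the same route as the paper, whose proof of Theorem \ref{th3.25} is exactly the one-line reduction to Theorem \ref{th3.19} combined with the observations preceding Theorem \ref{th3.22} (that $\dim\cH<\infty$ makes $\RH=\RCH$ and renders all closedness and range-finiteness conditions automatic). Your explicit verification that $\dim\ran\cB\leq\dim\cH_0<\infty$ and $\dim\ran\cA_j<\infty$ hold vacuously, so that claim {\rm (ii)} of Theorem \ref{th3.18} is triggered by rationality of $\tau_0$ alone, is precisely the bookkeeping the paper leaves implicit.
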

\begin{corollary}\label{cor3.26}
Let $\wt\gH$ be a Hilbert space, let $\gH$ be a subspace of a finite codimension in $\wt \gH$ (this means that $\dim (\wt\gH\ominus \gH)<\infty$)
and let $\wt A=\wt A^*$ be a linear relation in $\wt\gH$. Then the compression
$C(\wt A)=P_\gH \wt A\up\gH$  of
$\wt A$ (see \eqref{3.4}) is a self-adjoint linear relation in $\gH$.
\end{corollary}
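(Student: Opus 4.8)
The plan is to reduce the statement to the finite-codimensional exit space situation already settled in Theorem~\ref{th3.25}. First I would pass to the minimal part of $\wt A$ relative to $\gH$. Set $\wt\gH_0:=\ov{{\rm span}}\{\gH,(\wt A-\l)^{-1}\gH:\l\in\CR\}$; this subspace reduces the self-adjoint relation $\wt A$, so $\wt A=\wt A_0\oplus\wt A'$ with $\wt A_0\in\C(\wt\gH_0)$ and $\wt A'$ acting in $\wt\gH_0^\perp$. Since $\gH\subset\wt\gH_0$ while $\wt\gH_0^\perp\perp\gH$, for every $\{f,f'\}\in\wt A$ with $f\in\gH$ the component of $f'$ lying in $\wt\gH_0^\perp$ is annihilated by $P_\gH$; hence $C(\wt A)=C(\wt A_0)$ and it suffices to treat $\wt A_0$. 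By construction $\wt A_0$ is an exit space extension of the closed symmetric relation $A:=S(\wt A_0)=\wt A_0\cap\gH^2$ (see \eqref{3.3}), and $\gH$ has finite codimension in $\wt\gH_0$ because $\wt\gH_0\ominus\gH\subset\wt\gH\ominus\gH$.

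Next I would check that $A$ satisfies assumption~(a), i.e.\ that it has finite equal deficiency indices. Put $\gH_r:=\wt\gH_0\ominus\gH$ and $A_r:=\wt A_0\cap\gH_r^2$. As $\dim\gH_r<\infty$, the symmetric relation $A_r$ in the finite-dimensional space $\gH_r$ has equal defect numbers with $n_+(A_r)=n_-(A_r)\le\dim\gH_r<\infty$. By \cite[Lemma~2.14]{DM06} one has $n_\pm(A)=n_\pm(A_r)$, whence $n_+(A)=n_-(A)<\infty$. I would then fix a boundary triplet $\Pi=\bt$ for $A^*$, which exists by Proposition~\ref{pr2.5} and satisfies $\dim\cH=n_\pm(A)<\infty$.

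Now I would invoke the Krein-type parametrization. By Theorem~\ref{th2.14} the exit space extension $\wt A_0$ equals $\wt A_\tau$ for a unique $\tau\in\RH$, and since $\wt A_0$ is finite-codimensional, Proposition~\ref{pr3.17} shows that the operator part $\tau_0$ of $\tau$ is the rational function \eqref{3.23.1}, \eqref{3.23.2}. This is exactly the hypothesis of Theorem~\ref{th3.25} (under assumption~(a), now verified), and that theorem asserts that the compression $C(\wt A_\tau)$ is a self-adjoint linear relation in $\gH$. Since $C(\wt A)=C(\wt A_0)=C(\wt A_\tau)$, the corollary follows.

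The main obstacle I anticipate lies in the first paragraph: justifying that the compression is unchanged upon restricting to the minimal reducing subspace $\wt\gH_0$, and confirming that $A=S(\wt A_0)$ together with $\gH\subset\wt\gH_0$ of finite codimension genuinely places us in the finite-codimensional exit space framework. Once that reduction is secured, the finite-dimensionality of $\gH_r$ forces finite equal deficiency indices and, via Proposition~\ref{pr3.17}, the rationality of $\tau_0$, after which Theorem~\ref{th3.25} does all the remaining work.
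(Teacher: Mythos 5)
Your proof is correct and follows essentially the same route as the paper: reduce to the closed symmetric relation $S(\wt A)$, get $n_+=n_-<\infty$ via \cite[Lemma 2.14]{DM06} (exactly as in the proof of Theorem \ref{th3.18}, which the paper cites), and then apply Theorem \ref{th3.25} through the rationality of $\tau_0$ from Proposition \ref{pr3.17}. Your explicit passage to the minimal reducing subspace $\wt\gH_0$ with $C(\wt A)=C(\wt A_0)$ is a detail the paper's ``Clearly, $\wt A$ is an exit space extension of $S(\wt A)$'' leaves implicit (its definition of exit space extension includes the minimality condition), so this is a tightening of the same argument rather than a different approach.
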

\begin{proof}
Clearly, $\wt A$ is an exit space extension of the symmetric relation $S=S(\wt A)$ (see \eqref{3.3}). Moreover, it was shown in the proof of Theorem \ref{th3.18} that $n_+(S)=n_-(S)< \infty$. Therefore by Theorem \ref{th3.25}  $C(\wt A)$ is self-adjoint.
\end{proof}
\begin{remark}\label{rem3.27}
Statement of Corollary \ref{cor3.26} was proved by another method in \cite{ADW13} (for the case of an operator $\wt A=\wt A^*$ see \cite{Ste68}).
\end{remark}

\end{document}